\newtheorem{lemma}{Lemma}[section]
\newtheorem{theorem}{Theorem}[section]
\newtheorem{construction}{Construction}[section]
\newenvironment{adfenumerate}{
\begin{enumerate}
\setlength{\itemsep}{0.5mm}
\setlength{\parskip}{0mm}
\setlength{\parsep}{0mm}
}{
\end{enumerate}
}
\newcommand{\adfmod}[1]{~(\mathrm{mod}~#1)}    
\newcommand{\adfPENT}{\mathop{\mathrm{PENT}} } 
\newcommand{\ADFvfyParStart}[1]{}
\newcommand{\adfDgap}{\vskip 0mm}              
\newcommand{\adfLgap}{\vskip 0mm}              
\newcommand{\adfsplit}{\par}                   
\newcommand{\adfBfont}{\normalsize}                 
\newcommand{\adfhide}[1]{}                     
\begin{document}
\title{Pentagonal geometries with block sizes 3, 4 and 5}
\author{A. D. Forbes}
\address{School of Mathematics and Statistics\\
        The Open University\\
        Walton Hall, Milton Keynes MK7 6AA, UK}
\email{anthony.d.forbes@gmail.com}
\date{\today}             
\subjclass[2010]{05B25, 51E12}
\keywords{pentagonal geometry, group divisible design}

\maketitle
\begin{abstract}
A pentagonal geometry PENT($k$, $r$) is a partial linear space, where
every line, or block, is incident with $k$ points,
every point is incident with $r$ lines, and
for each point $x$, there is a line incident with precisely those points that are not collinear with $x$.
An opposite line pair in a pentagonal geometry consists of two parallel lines such that each point
on one of the lines is not collinear with precisely those points on the other line.

We give a direct construction for an infinite sequence of pentagonal geometries with block size 3 and connected deficiency graphs.
Also we present 39 new pentagonal geometries with block size 4 and five with block size 5,
all with connected deficiency graphs.
Consequentially we determine the existence spectrum up to a few possible exceptions for
PENT(4, $r$) that do not contain opposite line pairs and for
PENT(4, $r$) with one opposite line pair.
More generally, given $j$ we show that there exists a PENT(4, $r$) with $j$ opposite line pairs
for all sufficiently large admissible $r$.
Using some new group divisible designs with block size 5 (including types $2^{35}$, $2^{71}$ and $10^{23}$)
we significantly extend the known existence spectrum for PENT(5, $r$).
\end{abstract}


\section{Introduction}\label{sec:Introduction}
A {\em pentagonal geometry} is a uniform, regular partial linear space such that
for each point $x$, there is a line which is incident with precisely those points that are not collinear with $x$.
If $k$ is the number of points incident with a given line and
$r$ is the number of lines incident with a given point, then $k$ and $r$ are constants,
and we denote a pentagonal geometry with these parameters by $\adfPENT(k,r)$.
We always assume that $k \ge 2$ and $r \ge 1$.
For terminological convenience, we will break the point--line symmetry by regarding lines as sets of points.
Thus two points are collinear if they are on (i.e.\ elements of) the same line.
Lines are also called blocks, and we often refer to the parameter $k$ of a $\adfPENT(k,r)$ as the block size.

The concept of a pentagonal geometry was introduced in \cite{BallBambergDevillersStokes2013} to provide a generalization
of the pentagon and thus fill a gap in the theory of generalized polygons.
Feit \& Higman proved that a generalized $n$-gon of order $(s, t)$ with $s, t > 1$ exists only if $n \in \{2, 3, 4, 6, 8\}$,
\cite[Theorem 1]{FeitHigman1964}.
The geometry described by Ball, Bamberg, Devillers \& Stokes in \cite{BallBambergDevillersStokes2013}
is based on the observation that for each vertex $x$ of a pentagon, the two vertices that are not collinear with $x$ form a line.
According to the above definition the pentagon is a $\adfPENT(2,2)$.

The line consisting of the points that are not collinear with point $x$ is
called the {\em opposite line} to $x$ and is denoted by $x^\mathrm{opp}$.
An {\em opposite line pair} in a pentagonal geometry is a pair of non-intersecting lines such that for each point
$x$ on one of the lines, $x^\mathrm{opp}$ is the other line.
For any integer $k \ge 2$, there exists a $\adfPENT(k,1)$, which consists of two lines that form
an opposite line pair, and we say that such a pentagonal geometry is {\em degenerate}.
Conversely, a $\adfPENT(k,r)$ with $r > 1$ is {\em non-degenerate}.
As might be suggested by these terms, pentagonal geometries that do not contain opposite line pairs
are considered to be more interesting than those that do.

For the general theory as well as further background material, we refer the reader to \cite{BallBambergDevillersStokes2013},
and for convenience we reproduce some of its results.
\begin{lemma} \label{lem:v, b}
A pentagonal geometry $\adfPENT(k,r)$ has $r(k-1) + k + 1$ points and $(r(k-1) + k + 1)r/k$ lines.
A $\adfPENT(k,r)$ exists only if $r(r - 1) \equiv 0 \adfmod{k}$.
\end{lemma}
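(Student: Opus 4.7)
Both assertions come from elementary incidence counting, the only non-trivial input being the partial linear space axiom and the existence of the opposite line.

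\textbf{Counting the points.} Fix a point $x$. I will show that every other point falls into exactly one of two classes: collinear with $x$, or lying on $x^{\mathrm{opp}}$. First, $x$ lies on $r$ lines, each containing $k-1$ points besides $x$. Because we are in a partial linear space, any two of these lines meet only in $x$, so the $r(k-1)$ points collinear with $x$ are pairwise distinct. Second, $x^{\mathrm{opp}}$ is a block and therefore has exactly $k$ points, none of which is collinear with $x$, so these $k$ points are disjoint from the previous $r(k-1)$; also $x \notin x^{\mathrm{opp}}$, since otherwise $x$ would be collinear (via $x^{\mathrm{opp}}$) with the other $k-1$ points of $x^{\mathrm{opp}}$. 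Finally, by the defining property of a pentagonal geometry, any point $y \neq x$ that is not collinear with $x$ must lie on $x^{\mathrm{opp}}$. Adding $x$ itself, the $r(k-1)$ collinear points and the $k$ points of $x^{\mathrm{opp}}$, we get $v = r(k-1) + k + 1$.

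\textbf{Counting the lines and divisibility.} Let $b$ denote the number of lines. Double counting incidences gives $vr = bk$, whence $b = vr/k = (r(k-1)+k+1)r/k$. For this to be an integer we need $k \mid (r(k-1)+k+1)r$. Expanding, $(r(k-1)+k+1)r = k(r^2+r) - r(r-1)$, so the condition reduces to $k \mid r(r-1)$, i.e.\ $r(r-1) \equiv 0 \pmod{k}$.

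\textbf{Obstacles.} There is essentially nothing hard here; the only point requiring a little care is justifying that the $r(k-1)$ neighbours of $x$ are distinct from one another and from the $k$ points of $x^{\mathrm{opp}}$, and that $x$ itself is not on $x^{\mathrm{opp}}$. Each follows immediately from the partial linear space axiom together with the definition of $x^{\mathrm{opp}}$, so I expect the proof to be a few lines.
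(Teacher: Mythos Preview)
Your proof is correct. The paper does not give its own argument here but simply refers to \cite[Lemma 2.1]{BallBambergDevillersStokes2013}; your elementary incidence count (partitioning the points relative to a fixed $x$ and then double counting point--line incidences) is exactly the standard argument one finds there.
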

\begin{proof}
See \cite[Lemma 2.1]{BallBambergDevillersStokes2013}.
\end{proof}
Given $k$, we say that $r$ is {\em admissible} if $r(r - 1) \equiv 0 \adfmod{k}$.
\begin{lemma} \label{lem:r >= k}
For a non-degenerate $\adfPENT(k,r)$, we have $r \ge k$.
\end{lemma}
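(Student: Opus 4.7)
The plan is to fix a point $x$ and work with its opposite line $L := x^{\mathrm{opp}}$, which by Lemma \ref{lem:v, b} consists of exactly $k$ points. The crucial observation is that every $p \in L$ is non-collinear with $x$, so $x \in p^{\mathrm{opp}}$; equivalently, the line $p^{\mathrm{opp}}$ passes through $x$. This gives a map $\phi \colon L \to \{\text{lines through } x\}$ defined by $\phi(p) = p^{\mathrm{opp}}$. If $\phi$ is injective, then $k = |L| \le r$ and we are finished.

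The main obstacle is handling the case where $\phi$ collapses two points. Suppose $p_1 \ne p_2$ in $L$ satisfy $p_1^{\mathrm{opp}} = p_2^{\mathrm{opp}} = M$. For any $q \in M$, the line $q^{\mathrm{opp}}$ must contain both $p_1$ and $p_2$ (since $q$ is non-collinear with each); because the geometry is a partial linear space and $p_1, p_2$ are already joined by the line $L$, this forces $q^{\mathrm{opp}} = L$. Iterating, every $p \in L$ satisfies $p^{\mathrm{opp}} = M$, so $(L, M)$ is an opposite line pair and $\phi$ is constant.

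In this second case I finish by exhibiting a point outside $L \cup M$. Since $r \ge 2$, the total number of points is $r(k-1) + k + 1 = 2k + (r-1)(k-1) > 2k$, so some $y$ lies outside $L \cup M$. By the opposite-pair property, each $p \in L$ has $p^{\mathrm{opp}} = M$, and so $p$ is collinear with every point not on $M$ and distinct from $p$; in particular, $p$ is collinear with $y$. For distinct $p_1, p_2 \in L$, the lines $y p_1$ and $y p_2$ must differ, for otherwise the common line would contain two points of $L$ and thus equal $L$, contradicting $y \notin L$. Hence $y$ lies on at least $k$ distinct lines, which gives $r \ge k$.

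The only genuinely new ingredient is recognising the opposite-line-pair case and then producing the external point $y$; once those are in place, both branches close with the same elementary counting argument.
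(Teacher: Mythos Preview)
Your argument is correct. The paper itself does not supply a proof but merely cites \cite[Lemma 2.2]{BallBambergDevillersStokes2013}, so there is no in-paper argument to compare against; your two-case analysis (either $p\mapsto p^{\mathrm{opp}}$ is injective on $x^{\mathrm{opp}}$, or it collapses and forces an opposite line pair, after which any external point sees $k$ distinct lines to $L$) is the standard reasoning one finds in the cited source. One cosmetic remark: the fact that $|L|=k$ is immediate from the definition of $\adfPENT(k,r)$ rather than from Lemma~\ref{lem:v, b}, and you might also note explicitly that non-degeneracy (hence $r\ge 2$) is what guarantees a point outside $L\cup M$; both points are already implicit in what you wrote.
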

\begin{proof}
See \cite[Lemma 2.2]{BallBambergDevillersStokes2013}.
\end{proof}

The {\em deficiency graph} of a pentagonal geometry has as its vertices the points of the geometry, and
there is an edge $x \sim y$ precisely when $x$ and $y$ are not collinear.
\begin{lemma} \label{lem:deficiency graphvb}
The deficiency graph of a pentagonal geometry $\adfPENT(k,r)$ containing $m$ opposite line pairs
is the disjoint union of $m$ copies of the complete bipartite graph $K_{k,k}$ together with,
if $r(k-1) + k + 1 > 2mk$, a $k$-regular graph that has girth at least $5$.
\end{lemma}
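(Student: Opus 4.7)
The plan is to verify the three assertions---$k$-regularity, the structure contributed by opposite line pairs, and the girth condition---in turn, using only the definitions and the partial linear space property.

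First I would observe that the deficiency graph is $k$-regular. For any point $x$, the neighbors of $x$ are by definition the points not collinear with $x$, and these are precisely the $k$ points lying on the opposite line $x^{\mathrm{opp}}$.

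Next I would analyse the contribution of each opposite line pair. Fix an opposite pair $\{L, L'\}$. By definition, for every $x \in L$ we have $x^{\mathrm{opp}} = L'$, so the neighborhood of $x$ in the deficiency graph is exactly $L'$; symmetrically for $x \in L'$. Hence the $2k$ points of $L \cup L'$ induce the complete bipartite graph $K_{k,k}$, and all edges incident to these points remain inside $L \cup L'$, making this $K_{k,k}$ a union of components. Since the two lines of an opposite pair are disjoint by definition, and two distinct opposite pairs share no point (otherwise the common point would have two distinct opposite lines), summing over the $m$ opposite pairs produces exactly $m$ vertex-disjoint copies of $K_{k,k}$. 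If $r(k-1) + k + 1 > 2mk$, the remaining points form a subgraph; each such remaining point $x$ has $x^{\mathrm{opp}}$ not belonging to any opposite pair, so all $k$ neighbors of $x$ are also remaining points, and the remainder is $k$-regular.

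For the girth bound, first rule out triangles: if $x, y, z$ were pairwise adjacent in the deficiency graph, then $y, z \in x^{\mathrm{opp}}$ would place $y$ and $z$ on a common line, contradicting $y \sim z$. Then rule out 4-cycles $x,y,z,w$ in the remainder: both $y$ and $w$ lie on $x^{\mathrm{opp}}$ and on $z^{\mathrm{opp}}$, so the partial linear space property (two distinct points determine at most one line) forces $x^{\mathrm{opp}} = z^{\mathrm{opp}}$; symmetrically $y^{\mathrm{opp}} = w^{\mathrm{opp}}$. From here I would argue that every point $u$ on the line $L := x^{\mathrm{opp}}$ has $u^{\mathrm{opp}}$ containing both $x$ and $z$ (since $u$ is not collinear with either), so by uniqueness of the line through $x$ and $z$ we get $u^{\mathrm{opp}} = y^{\mathrm{opp}} =: L'$, and symmetrically every point of $L'$ has opposite line $L$. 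Thus $\{L, L'\}$ is an opposite line pair, the $K_{k,k}$ it spans is a component, and the 4-cycle is trapped inside that component---contradicting that $x, y, z, w$ lie in the remainder.

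The main obstacle is the 4-cycle argument: it is easy to deduce that $x^{\mathrm{opp}} = z^{\mathrm{opp}}$ and $y^{\mathrm{opp}} = w^{\mathrm{opp}}$, but one must then bootstrap this coincidence of opposite lines at four specific points into the full statement that \emph{every} point of $L$ has opposite line $L'$, so as to produce a genuine opposite line pair and reach a contradiction. Everything else is a direct unpacking of the definitions.
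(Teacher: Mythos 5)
Your proof is correct and complete; note that the paper itself does not prove this lemma but simply cites \cite{BallBambergDevillersStokes2013}, and your argument is the natural one that the original source uses (neighbourhoods are opposite lines, hence $k$-regularity; opposite pairs give isolated $K_{k,k}$'s; collinearity kills triangles; a $4$-cycle forces $x^{\mathrm{opp}}=z^{\mathrm{opp}}$ and $y^{\mathrm{opp}}=w^{\mathrm{opp}}$ and then bootstraps to an opposite line pair containing all four vertices). The only detail left implicit in your $4$-cycle step is that the two lines $L$ and $L'$ you produce are non-intersecting, as the definition of an opposite line pair requires, but this is immediate: a common point $u$ would satisfy $u\in u^{\mathrm{opp}}$, i.e.\ $u$ not collinear with itself, which is absurd.
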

\begin{proof}
See \cite[Lemma 2.5]{BallBambergDevillersStokes2013}.
\end{proof}
Hence the deficiency graph of a $\adfPENT(k,r)$ without an opposite line pair
has $r(k - 1) + k + 1$ vertices, is $k$-regular and has girth at least 5.
\begin{lemma} \label{lem:(k,k), (k,k+1)}
There exists a pentagonal geometry $\adfPENT(k,k)$ only when $k = 2$, $3$, $7$ and possibly $57$.
There exists a pentagonal geometry $\adfPENT(k-1,k)$ only when $k = 3$, $7$ and possibly $57$.
\end{lemma}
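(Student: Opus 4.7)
The plan is to pass through Lemma \ref{lem:deficiency graphvb} and the Hoffman--Singleton classification of Moore graphs of girth~$5$.

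For a $\adfPENT(k,k)$, Lemma \ref{lem:v, b} gives $k^2+1$ points, and Lemma \ref{lem:deficiency graphvb} supplies a deficiency graph with exactly that many vertices. A short Moore-bound argument on the non-$K_{k,k}$ part rules out opposite line pairs: either $2mk = k^2+1$ (no integer solution for $k\ge2$) or the residual is $k$-regular of girth $\ge5$ on fewer than $k^2+1$ vertices (which violates the Moore bound). Hence the deficiency graph is itself a Moore graph of girth~$5$ and valency~$k$, and by Hoffman--Singleton we have $k\in\{2,3,7,57\}$.

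For a $\adfPENT(k-1,k)$, the deficiency graph has $k(k-1)$ vertices. Running the same bookkeeping, the inequality $(k-1)(k-2m)\ge (k-1)^2+1$ collapses to $m\le0$, so the only structural possibilities are $m=0$ or ($k$ even, $m=k/2$) with the deficiency graph a disjoint union of $K_{k-1,k-1}$'s. The second case I would eliminate by a geometric argument: given distinct opposite line pairs $(A,A')$ and $(B,B')$ and collinear points $a\in A$, $b\in B$, the unique line through $a$ and $b$ has $k-3$ further points which must avoid both $A\cup A'$ and $B\cup B'$, and the combinatorics is untenable for $k=4$ (where $k-3=1$ and the single extra point has nowhere to go) and extends to larger even $k$ by counting how the $k-1$ non-$A$ lines through $a$ distribute the points of $\bigcup_{j\ne1}(B_j\cup B'_j)$.

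The crux is the case $m=0$. Here the deficiency graph $\Gamma$ is a $(k-1)$-regular girth-$\ge5$ graph on $(k-1)k$ vertices, i.e., with Moore excess exactly $k-2$, and is precisely the candidate distance-$2$ layer of a Moore graph of girth~$5$ and valency~$k$. I would reconstruct such a Moore graph on $k^2+1$ vertices by adjoining an apex vertex $\infty$ and $k$ auxiliary vertices corresponding to a parallel class of $k$ disjoint lines of the geometry, with edges joining $\infty$ to each auxiliary vertex and each auxiliary vertex to the $k-1$ points of its associated line. The main obstacle, and where I would spend the most care, is selecting the parallel class so that the extension has girth $\ge5$: this amounts to showing that for two collinear points $y,z$ on the same chosen line the opposite lines $y^{\mathrm{opp}}$ and $z^{\mathrm{opp}}$ are disjoint, equivalently that no point of the geometry has the line $\langle y,z\rangle$ as its opposite line. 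Once this is secured, Hoffman--Singleton forces $k\in\{2,3,7,57\}$, and $k=2$ is excluded because the block size $k-1=1$ contradicts the standing convention $k\ge2$.
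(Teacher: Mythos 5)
The paper does not actually prove this lemma itself; it simply cites Sections 3 and 4 of Ball, Bamberg, Devillers and Stokes, so your reconstruction has to stand on its own. The first half does: for a $\adfPENT(k,k)$ the Moore-bound dichotomy ($2mk=k^2+1$ has no integer solution since $k\mid k^2$, and a nonempty residual $k$-regular girth-$5$ graph on $k^2+1-2mk<k^2+1$ vertices violates the Moore bound) correctly forces $m=0$, and Hoffman--Singleton finishes. Your elimination of the all-$K_{k-1,k-1}$ case for $\adfPENT(k-1,k)$ is also sound once written out: the line through $a\in A_1$ and $b\in A_2$ must place its remaining $k-3$ points into the $k-4$ lines $A_j,A_j'$ with $j\ge 3$, at most one per line, which is impossible.

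The genuine gap is in the case $m=0$ of the second statement, exactly where you say you would ``spend the most care'': you never establish that a parallel class of $k$ pairwise disjoint lines exists, let alone one in which no chosen line is any point's opposite line, and without that the apex construction does not get off the ground. The missing observation is that the opposite lines themselves do all the work. In a $\adfPENT(k-1,k)$ with no opposite line pair, $x^{\mathrm{opp}}$ is precisely the neighbourhood $N_\Gamma(x)$ in the deficiency graph $\Gamma$; girth at least $5$ forbids $N_\Gamma(x)=N_\Gamma(y)$ for $x\ne y$ (two common neighbours would give a $4$-cycle), so the $k(k-1)$ opposite lines are pairwise distinct. A point $y$ lies on $z^{\mathrm{opp}}$ if and only if $z\in N_\Gamma(y)$, so $y$ lies on exactly $k-1$ opposite lines and hence on exactly one of the $k^2-k(k-1)=k$ remaining lines. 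Those $k$ non-opposite lines therefore partition the point set, and by construction none of them is the opposite line of any point --- which is precisely the girth condition you isolated. With that supplied, your extension is a $k$-regular girth-$5$ graph on $k^2+1$ vertices, Hoffman--Singleton applies, and $k=2$ is excluded because the block size $k-1=1$ is disallowed.
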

\begin{proof}
See \cite[Sections 3 and 4]{BallBambergDevillersStokes2013}.
\end{proof}
The pentagonal geometries of Lemma~\ref{lem:(k,k), (k,k+1)} are related to Moore graphs---it turns out that
the deficiency graphs of the $\adfPENT(k,k)$ for $k = 2$, $3$ and $7$ are
the pentagon, the Petersen graph and the Hoffman--Singleton graph respectively.
Removing a point and its opposite line from a $\adfPENT(k,k)$, $k > 2$, yields a $\adfPENT(k-1,k)$,
\cite[Theorem 4.4]{BallBambergDevillersStokes2013}.

Section~\ref{sec:Block size 3} of this paper deals with pentagonal geometries with block size 3.
In Theorem~\ref{thm:PENT-3-connected} we describe a direct construction for $\adfPENT(3,r)$, $r \ge 5$, $r \equiv 3 \adfmod{6}$, where
the deficiency graph is connected.

In Section~\ref{sec:Block size 4} we present 39 new $\adfPENT(4,r)$ with connected deficiency graphs.
We use them to determine the existence spectrum up to a few possible exceptions
for $\adfPENT(4,r)$ geometries without opposite line pairs
(Theorems~\ref{thm:PENT-4-0OLP-constructed-weak} and \ref{thm:PENT-4-0OLP-constructed})
as well as
for $\adfPENT(4,r)$ geometries with precisely one opposite line pair
(Theorem~\ref{thm:PENT-4-1OLP-constructed-weak}).
More generally, we show that given integer $j \ge 2$, there exists a $\adfPENT(4, r)$ with precisely $j$ opposite line pairs
for all sufficiently large $r \equiv 0 \textrm{~or~} 1 \adfmod{4}$;
this is Theorem~\ref{thm:PENT-4-jOLP-constructed-weak}.

Finally, in Section~\ref{sec:Block size 5} we present five new $\adfPENT(5,r)$ systems (also with connected deficiency graphs) with which,
together with some new group divisible designs, we significantly extend the known existence spectrum of
pentagonal geometries with block size 5.

For the creation of new pentagonal geometries from existing ones, we use Theorem~\ref{thm:GDD-basic}, below,
a straightforward adaptation of Wilson's Fundamental Construction, \cite{WilsonRM1972}, \cite[Theorem IV.2.5]{GreigMullen2007}, and a simple extension of \cite[Theorems 7 and 8]{GriggsStokes2016}.
First we require some definitions.

A {\em group divisible design}, $k$-GDD, of type $g_1^{u_1} g_2^{u_2} \dots g_n^{u_n}$ is
an ordered triple ($V,\mathcal{G},\mathcal{B}$) such that:
\begin{adfenumerate}
\item[(i)]{$V$
is a base set of cardinality $u_1 g_1 + u_2 g_2 + \dots + u_n g_n$;}
\item[(ii)]{$\mathcal{G}$
is a partition of $V$ into $u_i$ subsets of cardinality $g_i$, $i = 1, 2, \dots, n$, called \textit{groups};}
\item[(iii)]{$\mathcal{B}$
is a non-empty collection of $k$-subsets of $V,$ called \textit{blocks}; and}
\item[(iv)]{each pair of elements from distinct groups occurs in precisely one block but no pair of
elements from the same group occurs in any block.}
\end{adfenumerate}
A $k$-GDD of type $q^k$ is also called a {\em transversal design}, TD$(k,q)$.
A {\em parallel class} in a group divisible design is a subset of the block set in which each element of the base set appears exactly once.
A $k$-GDD is called {\em resolvable}, and is denoted by $k$-RGDD, if the entire set of blocks can be partitioned into parallel classes.
If there exist $k$ mutually orthogonal Latin squares (MOLS) of side $q$, then there exists
a $(k+2)$-GDD of type $q^{k+2}$ and
a $(k+1)$-RGDD of type $q^{k+1}$, \cite[Theorem III.3.18]{AbelColbournDinitz2007}.
Furthermore, as is well known, there exist $q - 1$ MOLS of side $q$ whenever $q$ is a prime power.

\begin{theorem}
\label{thm:GDD-basic}
Let $k \ge 2$ be an integer.
For $i = 1$, $2$, \dots, $n$, let $r_i$ be a positive integer, let $v_i = (k-1) r_i + k + 1$,
and suppose there exists a pentagonal geometry $\adfPENT(k, r_i)$ with exactly $p_i$ opposite line pairs.
Suppose also that there exists a $k$-$\mathrm{GDD}$ of type $v_1^{u_1} v_2^{u_2} \dots v_n^{u_n}$.
Let $N = u_1 + u_2 + \dots + u_n$,
$R = u_1 r_1 + u_2 r_2 + \dots + u_n r_n$ and
$P = u_1 p_1 + u_2 p_2 + \dots + u_n p_n$.
Then there exists a  pentagonal geometry
$\adfPENT(k, R + (N - 1)(k + 1)/(k - 1))$ with exactly $P$ opposite line pairs.
\end{theorem}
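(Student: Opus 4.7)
The plan is to adapt Wilson's Fundamental Construction. I would take the $k$-GDD $(V, \mathcal{G}, \mathcal{B})$ as given, and for each group $G \in \mathcal{G}$ of size $v_i$ I would place on $G$ the block set of a $\adfPENT(k, r_i)$ that has $p_i$ opposite line pairs. The point set of the new geometry is $V$ and its block set is the union of $\mathcal{B}$ with these internal pentagonal geometry blocks. Since $v_i = (k-1)r_i + (k+1)$, summing gives $|V| = \sum_i u_i v_i = (k-1)R + N(k+1)$, which rearranges to $(k-1)r + k + 1$ for the value $r = R + (N-1)(k+1)/(k-1)$ in the statement, matching the count in Lemma~\ref{lem:v, b}. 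For regularity, a point $x$ in a group of size $v_i$ lies on $r_i$ internal blocks and on $(|V|-v_i)/(k-1)$ blocks of $\mathcal{B}$, since $x$ is paired with each of the $|V|-v_i$ points outside its group in exactly one GDD block; a short calculation collapses the sum to $r$ independently of $i$.

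The crux of the argument is the pentagonal property. I would fix a point $x$ in group $G$ and observe that the GDD condition forces $x$ to share a block with every point outside $G$, so no point of $V \setminus G$ is a non-neighbour of $x$ in the deficiency graph of the new structure. Inside $G$, the internal $\adfPENT(k, r_i)$ already furnishes a block $x^{\mathrm{opp}}$ of size $k$ consisting of exactly the non-neighbours of $x$ within $G$, and this block is also a block of the new design. The pentagonal axiom therefore holds, and the resulting structure is a $\adfPENT(k, r)$.

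For the opposite-line-pair count I would exploit the previous observation that $x^{\mathrm{opp}}$ always lies inside $x$'s group. This rules out every GDD block from participating in an opposite pair, since its $k$ points belong to $k$ distinct groups and hence have opposite lines lying in distinct groups; and it similarly rules out two internal blocks from different groups pairing with one another. Conversely, each of the $p_i$ internal opposite pairs of the $i$th ingredient $\adfPENT(k, r_i)$ survives into the new design, since its two blocks remain disjoint and every point still has the same opposite line. Summing yields $\sum_i u_i p_i = P$ opposite line pairs in total. The only genuine obstacle is this opposite-line-pair bookkeeping; the parameter counts and the pentagonal property itself both fall out immediately from the defining GDD axioms.
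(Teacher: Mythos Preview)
Your proposal is correct and follows exactly the approach of the paper: overlay each group of the $k$-GDD with the appropriate $\adfPENT(k,r_i)$, then observe that every opposite line $x^{\mathrm{opp}}$ lies in $x$'s group so that GDD blocks contribute no opposite line pairs. The paper's proof is a two-sentence sketch of precisely this construction; you have simply filled in the parameter verification and the case analysis for the opposite-line-pair count that the paper leaves implicit.
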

\begin{proof}
Overlay each group of size $v_i$ with a $\adfPENT(k,r_i)$, $i = 1$, 2, \dots, $n$.
Clearly the blocks of the group divisible design contain no opposite line pair.
Therefore any opposite line pairs in the $\adfPENT(k, R + (N - 1)(k + 1)/(k - 1))$
must arise from the $\adfPENT(k,r_i)$.
\end{proof}


\section{Block size 3}\label{sec:Block size 3}
By Lemma~\ref{lem:v, b}, a pentagonal geometry with block size 3 has $2r + 4$ points,
and $r$ is admissible if and only if $r \equiv 0 \textrm{~or~} 1 \adfmod{3}$.
The existence spectrum was determined in \cite[Theorem 9]{GriggsStokes2016},
where it is proved that $\adfPENT(3,r)$ geometries exist for all admissible $r$ except $r \in \{4, 6\}$.
Also it is proved in \cite{ForbesGriggsStokes2020} that a $\adfPENT(3,r)$ containing no opposite line pair
exists for each admissible $r \not\in \{1, 4, 6, 7\}$.

Here we address the construction of pentagonal geometries with connected deficiency graphs.
The authors of \cite{BallBambergDevillersStokes2013} remark that
only a finite number of such $\adfPENT(k,r)$ with $k \ge 3$ are known to exist.
Furthermore, in \cite{GriggsStokes2016}, \cite{ForbesGriggsStokes2020} and in
Sections~\ref{sec:Block size 4} and \ref{sec:Block size 5} of this paper,
every pentagonal geometry created by a construction involving a group divisible design with $u$ groups
has a deficiency graph consisting of at least $u > 1$ connected components.

We now give a direct construction for infinitely many $\adfPENT(3,r)$ that have connected deficiency graphs.

\begin{lemma}
\label{lem:Triples}
Let $m$ be an integer such that $m \ge 5$.
Then there exists a set $T$ of $m$ ordered triples of the form $(0, x, z)$ such that
\begin{align*}&\bigcup\{\{x, z-x, z\}: (0, x, z) \in T\} \\
  & ~~~~~ = \left\{\begin{array}{ll}\{3, 4, \dots, 3m + 2\} & \textrm{~if~} m \equiv 0,1 \adfmod{4},\\
                                       \{3, 4, \dots, 3m + 1, 3m + 3\} & \textrm{~if~} m \equiv 2,3 \adfmod{4}.
\end{array}\right.
\end{align*}
\end{lemma}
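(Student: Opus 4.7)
The plan is to reformulate the lemma as a partition problem and then exhibit the partition explicitly. Writing $a = x$, $b = z - x$, and $c = z$, each ordered triple $(0, x, z) \in T$ corresponds to an unordered Schur triple $\{a, b, c\}$ with $a + b = c$; the union condition is exactly that these $m$ Schur triples partition the target set
\[
S_m = \begin{cases}\{3, 4, \ldots, 3m+2\} & m \equiv 0, 1 \adfmod{4},\\ \{3, 4, \ldots, 3m+1, 3m+3\} & m \equiv 2, 3 \adfmod{4}.\end{cases}
\]
First I would verify that the particular shape of $S_m$ is forced by a parity obstruction: each Schur triple contributes $2c$ to $\sum S_m$, so $\sum S_m$ must be even, and a short computation shows $\sum\{3, \ldots, 3m+2\} = 3m(3m+5)/2$ is even precisely when $m \equiv 0, 1 \adfmod{4}$, with the top-element shift $3m+2 \to 3m+3$ restoring parity in the other residues.

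For the construction itself I would give explicit triples, with a case analysis on $m \bmod 4$. A natural template fixes most $c$-values in the top block $\{2m+4, 2m+5, \ldots, 3m+2\}$ (with $3m+2$ replaced by $3m+3$ in the relevant residues) and uses a small number of correction triples whose $c$-values lie in the middle range of $S_m$, chosen so that $\sum c_i = (\sum S_m)/2$. For moderate $m$, a single correction triple with $c_0 \in \{12, 13\}$ suffices; for instance $c_0 = 13$ works cleanly for $m \in \{5, 6, 7, 8\}$. For larger $m$, where a single $c_0$ would be forced below the feasible range $c_0 \ge 3 + 4 = 7$, one uses several correction triples whose combined $c$-sum absorbs the discrepancy, parameterized cleanly within each residue class modulo~$4$. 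Once the $c$-multiset is fixed, each pair $(a_i, b_i)$ is read off from the remaining elements by a largest-$c$-first greedy procedure, with small local swaps to resolve the few conflicts that arise.

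The main obstacle will be showing that the parameterized assignment is always conflict-free, namely that the remaining $2m$ elements of $S_m$ really can be matched into pairs $(a_i, b_i)$ with $a_i + b_i = c_i$ for every $m \ge 5$. I would resolve this by tabulating the small base cases directly and giving explicit formulas for the general construction in each residue class mod~$4$. Verification then reduces to routine arithmetic checks that every triple satisfies $a + b = c$, that all values lie in $S_m$, and that the $3m$ produced values are pairwise distinct; this last disjointness check, combinatorial rather than arithmetic, is the crux of the argument.
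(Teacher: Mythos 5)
Your reformulation is sound: since the target set has exactly $3m$ elements and there are $m$ triples, the union condition is indeed equivalent to partitioning $S_m$ into $m$ Schur triples $\{a,b,a+b\}$, and your parity computation (each triple contributes $2c$ to $\sum S_m$, and $3m(3m+5)/2$ is even precisely for $m\equiv 0,1\pmod 4$) correctly explains why the top element is shifted from $3m+2$ to $3m+3$ in the other residue classes. The arithmetic behind your single-correction template also checks out: with $m-1$ of the $c$-values in the top block, the forced correction value is $c_0=(-m^2+13m+12)/4$, which equals $13$ for $m\in\{5,8\}$ and drops below the feasible threshold around $m=12$, exactly as you anticipate.

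The gap is that the construction is never actually produced. Everything after the parity observation is a description of what a proof would contain: ``several correction triples\dots parameterized cleanly within each residue class,'' a ``largest-$c$-first greedy procedure, with small local swaps,'' and ``explicit formulas for the general construction.'' You correctly identify the conflict-free matching of the remaining $2m$ elements into pairs $(a_i,b_i)$ with $a_i+b_i=c_i$ as the crux, but you give no argument that the greedy-plus-swaps procedure terminates successfully for all $m\ge 5$, and no formulas that could be verified. That is the entire content of the lemma. For contrast, the paper avoids this difficulty by a change of variables: it takes a partition of $\{1,\dots,2m\}$ (or $\{1,\dots,2m-1,2m+1\}$) into pairs $(a_i,b_i)$ whose differences exhaust $\{3,\dots,m+2\}$ --- a Langford-type pairing known to exist for $m\ge 5$ by Bermond--Brouwer--Germa and Simpson, with explicit families reproduced in Table~1 --- and sets $x=b_i-a_i$, $z=b_i+m+2$, so that the $x$-values fill $\{3,\dots,m+2\}$ and the $z$- and $(z-x)$-values fill the shifted set $M+(m+2)$. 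If you want to pursue your direct Schur-partition route, you would need to supply the explicit parameterized families in each residue class (or reduce to a citable existence result, as the paper does); as written, the proof is a plan rather than a proof.
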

\begin{proof}
Let $M = \{1, 2, \dots, 2m\}$ if $m \equiv 0 \textrm{~or~} 1 \adfmod{4}$,
$M = \{1, 2, \dots, 2m-1, 2m+1\}$ if $m \equiv 2 \textrm{~or~} 3 \adfmod{4}$.
Create a partition of $M$ into a set of $m$ mutually disjoint ordered pairs
$$L = \{(a_i, b_i): i = 1, 2, \dots, m\}$$
such that the difference set
$\{b_i - a_i: i = 1, 2, \dots, m\}$ is $\{3, 4, \dots, m + 2\}$.
Since $m \ge 5$ this is possible by \cite{BermondBrouwerGermaAbe1978} and \cite{Simpson1983},
where explicit constructions are given, which we have adapted as shown in Table~\ref{tab:Triples}.
For the two cases not covered by Table~\ref{tab:Triples}, we have from
\cite{BermondBrouwerGermaAbe1978} and \cite{Simpson1983} respectively:
\begin{align*}
m = 5: &~ L = \{(1,8),(4,10),(2,7),(5,9),(3,6)\}, \\
m = 6: &~ L = \{(6,9),(1,5),(3,8),(7,13),(4,11),(2,10)\}.
\end{align*}
The triples are given by
$$T = \{(0, b_i - a_i, b_i + m + 2): (a_i, b_i) \in L,~~ i = 1, 2, \dots, m\}.$$
It is easily verified that $T$ has the stated property.
\end{proof}

\begin{table}[h]
\begin{tabular}{ll}
\multicolumn{2}{c}{$m = 4 t$, $m \ge 8$} \\
($2t - j - 2,  2t + j + 2$), & $0 \le j \le t - 3$ \\
($ t - j,      3t + j + 1$), & $0 \le j \le t - 1$ \\
($6t - j - 1,  6t + j + 2$), & $0 \le j \le t - 2$ \\
($5t - j,      7t + j + 2$), & $0 \le j \le t - 2$ \\
($2t + 1,      4t + 1$) & 
($3t,          7t + 1$)    \\
($2t - 1,      6t + 1$) & 
($2t,          6t$)       \\
\multicolumn{2}{c}{$m = 4 t+1$, $m \ge 9$} \\
$(j,           4t - j + 2)$, & $1 \le j \le t$ \\
$(j +  t + 1,  3t - j + 2)$, & $1 \le j \le t - 1$ \\
$(j + 4t + 2,  8t - j + 3)$, & $1 \le j \le t$ \\
$(j + 5t + 3,  7t - j + 3)$, & $1 \le j \le t - 2$ \\
$(2t + 1,      6t + 4)$ & 
$(2t + 2,      6t + 3)$    \\
$(4t + 2,      6t + 2)$ & 
$( t + 1,      5t + 3)$    \\
\multicolumn{2}{c}{$m = 4 t+10$, $m \ge 10$} \\
$(2t - j + 1,   2t + j + 5)$,  & $0 \le j \le t$ \\
$( t - j,       3t + j + 7)$,  & $0 \le j \le t - 1$ \\
$(6t - j + 10,  6t + j + 15)$, & $0 \le j \le t - 1$ \\
$(5t - j + 10,  7t + j + 16)$, & $0 \le j \le t$ \\
$(3t + 6,       7t + 15)$ & 
$(4t + 7,       8t + 19)$    \\
$(2t + 4,       6t + 11)$ & 
$(4t + 9,       8t + 17)$    \\
$(4t + 8,       6t + 13)$ & 
$(2t + 2,       6t + 12)$    \\
$(2t + 3,       6t + 14)$ & 
$(8t + 18,      8t + 21)$    \\
\multicolumn{2}{c}{$m = 4 t+7$, $m \ge 7$} \\
$(2t - j + 1,  j+2 t+4)$,  & $0 \le j \le t$ \\
$(t - j,       j+3 t+6)$,  & $0 \le j \le t - 1$ \\
$(6t - j + 8,  j+6 t+12)$, & $0 \le j \le t - 1$ \\
$(5t - j + 8,  j+7 t+13)$, & $0 \le j \le t$ \\
$(3t + 5,      7t + 12)$ & 
$(4t + 6,      8t + 15)$    \\
$(2t + 3,      6t +  9)$ & 
$(4t + 7,      6t + 11)$    \\
$(2t + 2,      6t + 10)$   
\end{tabular}

\vskip 2mm
\caption{Lemma~\ref{lem:Triples}: The set $L$}
\label{tab:Triples}
\end{table}

\begin{theorem}
\label{thm:PENT-3-connected}
If $m \ge 5$, there exists a pentagonal geometry $\adfPENT(3,6m + 3)$ with connected deficiency graph.
\end{theorem}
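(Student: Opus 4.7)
I would aim for a direct cyclic construction on the vertex set $V = \mathbb{Z}_{6m+5} \times \{0, 1\}$, which has $12m+10 = 2(6m+3)+4$ points, the correct cardinality for a $\adfPENT(3, 6m+3)$ by Lemma~\ref{lem:v, b}. Let $G = \mathbb{Z}_{6m+5}$ act on $V$ by translation in the first coordinate. Because $\gcd(3, 6m+5) = 1$, every $G$-orbit on $3$-subsets of $V$ has size $6m+5$, and since the total number of blocks of the geometry is $(6m+5)(4m+2)$, exactly $4m+2$ base blocks are required.

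For the deficiency graph I would take the generalised Petersen graph $D = GP(6m+5, 2)$, whose edges consist of a $(6m+5)$-cycle through $\mathbb{Z}_{6m+5} \times \{0\}$ using pure-$0$ difference $1$, a $(6m+5)$-cycle through $\mathbb{Z}_{6m+5} \times \{1\}$ using pure-$1$ difference $2$, and the perfect matching $\{((i, 0), (i, 1)) : i \in \mathbb{Z}_{6m+5}\}$ using mixed difference $0$. For odd $6m+5 \ge 35$ this graph is $3$-regular, connected, and of girth $5$, so by Lemma~\ref{lem:deficiency graphvb} it is an admissible deficiency graph. The remaining non-$D$ pairs fall into $12m+6 = 3(4m+2)$ difference orbits: the $3m+1$ pure-$0$ differences in $\{2, 3, \ldots, 3m+2\}$, the $3m+1$ pure-$1$ differences in $\{1, 3, 4, \ldots, 3m+2\}$, and the $6m+4$ mixed differences in $\{1, 2, \ldots, 6m+4\}$, matching exactly the difference budget of $4m+2$ base triples.

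To populate the $4m+2$ base blocks I would use the $m$ triples $(0, x, z)$ from Lemma~\ref{lem:Triples}, lifting each to a mixed-type base block $\{(0, 0), (x, 1), (z, 1)\}$ that contributes the pure-$1$ difference $z - x$ together with the mixed differences $x$ and $z$. The remaining $3m+2$ base blocks I would exhibit by explicit formulae organised by $m \bmod 4$, in the same spirit as Table~\ref{tab:Triples}, chosen to cover precisely the residual pure-$0$, pure-$1$ and mixed differences (notably the small differences $1$ and $2$ and the range $\{3m+3, \ldots, 6m+4\}$ of mixed differences that Lemma~\ref{lem:Triples} leaves untouched).

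The main obstacle is constructing this second family of $3m+2$ base blocks for each residue class of $m \bmod 4$. The key combinatorial constraint is that each base block's three differences are not independent: the two mixed differences inside a mixed-type block must differ by exactly its pure difference, and the three differences inside a pure-type block must satisfy an additive relation modulo $6m+5$. Once such an explicit family is exhibited and verified, the rest of the proof is routine: each non-$D$ pair lies in a unique translate of a base block (so the blocks form the lines of a partial linear space), and for every point $p \in V$ the three $D$-neighbours of $p$ are exactly the points not collinear with $p$, giving the opposite line of $p$ and completing the $\adfPENT(3, 6m+3)$ with connected deficiency graph $D$.
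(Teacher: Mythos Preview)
Your framework matches the paper's exactly: the point set $\mathbb{Z}_{6m+5}\times\{0,1\}$ with the cyclic action, and the generalised Petersen graph $GP(6m+5,2)$ as the deficiency graph (the paper swaps the roles of the two levels, but that is immaterial). Where your proposal diverges is in how Lemma~\ref{lem:Triples} is deployed and, crucially, in leaving the remaining $3m+2$ base blocks unconstructed.

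The paper uses the $m$ triples of Lemma~\ref{lem:Triples} as \emph{pure} level-$1$ blocks $\{0_1,x_1,z_1\}$, so that they absorb exactly the pure-$1$ differences $\{3,4,\dots,3m+2\}$. The remaining $3m+2$ base blocks are then given by a single closed formula, with no case analysis on $m$: two of them are the opposite-line blocks (the $D$-neighbourhoods of $0_0$ and of $0_1$, which must themselves be blocks of the geometry), and the other $3m$ are the parametric family
\[
\{0_0,\,i_0,\,(i/2)_1\},\qquad i\in\{1,3,5,6,7,\dots,3m+2\},
\]
where $i/2$ is computed modulo the odd number $6m+5$. The point of this ``halving trick'' is that the triple $\{0_0,i_0,(i/2)_1\}$ contributes the pure-$0$ difference $i$ together with the two mixed differences $\pm i/2$; as $i$ ranges over $\{1,\dots,3m+2\}$ these sweep out all pure-$0$ differences and all nonzero mixed differences simultaneously, and the two omitted values $i=2,4$ are precisely what the two opposite-line base blocks (together with the deficiency-graph differences) account for. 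No splitting by $m\bmod 4$ is needed here; that case distinction lives entirely inside Lemma~\ref{lem:Triples}.

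Your plan to lift the Lemma~\ref{lem:Triples} triples to mixed blocks $\{(0,0),(x,1),(z,1)\}$ is not wrong in principle, but it spends the lemma less efficiently: it covers only the $m$ pure-$1$ differences $z-x\in\{3,\dots,m+2\}$ and a scattered set of $2m$ mixed differences, leaving a much more irregular residual pattern for your unspecified $3m+2$ blocks to absorb. That unspecified family is the genuine gap in your proposal, and the paper's halving trick is exactly the missing idea that closes it cleanly.
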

\begin{proof}
Assume $m \ge 5$, let $Q = Z_{6m + 5}$ and let $q = |Q| = 6m + 5$.
We wish to construct a $\adfPENT(3, 6m + 3)$.
The number of points is $v = 12m + 10$ and the number of blocks is $b = (4m + 2)q$.

Let $V = Q \times Z_2$ and write $x_j$ for element $(x, j)$ of $V$.
We construct $4m+2$ base blocks for a $\adfPENT(3, 6m + 3)$ on point set $V$ with
automorphism
\begin{equation}\label{eqn:Automorphism}
x_j \mapsto (x + 1)_j,~~ x \in Q,~~ j = 0, 1,
\end{equation}
as follows.

The base blocks for the opposite lines are
$$\{0_1, 2_0, (q-2)_0\} = (0_0)^{\mathrm{opp}} \textrm{~~and~~} \{0_0, 1_1, (q-1)_1\} = (0_1)^{\mathrm{opp}}.$$
The corresponding deficiency graph is a generalized Petersen graph which consists of two $q$-cycles,
$(0_1$, $1_1$, \dots, $(q-1)_1)$ and $(0_0$, $2_0$, \dots, $(q-1)_0$, $1_0$, $3_0$, \dots, $(q-2)_0)$, linked by edges
$\{0_0, 0_1\}$, $\{1_0, 1_1\}$, \dots, $\{(q-1)_0, (q-1)_1\}.$ The graph has girth 5.

Another $3m$ base blocks are given by
$$ \{\{0_0, i_0, (i/2)_1\}: i = 1, 3, 5, 6, 7, \dots, 3m + 2\}.$$
Note that $i = 2$, $4$ are omitted and that $(i/2)_1$ is well defined since $q$ is odd.
For the final $m$ base blocks, we have
$$\{\{0_1, x_1, z_1\}: (0, x, z) \in T\},$$
where $T$ is the set of Lemma~\ref{lem:Triples}.

To see why the construction works, we define the {\em difference} of a pair
$\{x_i, y_j\}$, $x_i \neq y_j$, $0 \le x \le y < q$, $i, j \in \{0,1\}$, as
$$\begin{array}{ll}
(y - x)_{j,i}       & \textrm{~if~} 0 < y - x < q/2, \\
(q - (y - x))_{i,j} & \textrm{~if~} y - x > q/2, \\
0_{0,1} = 0_{1,0}    & \textrm{~if~} x = y.
\end{array}$$
The differences generated by a triple are those arising from the three pairs contained in it.
The difference of a pair is preserved under the automorphism (\ref{eqn:Automorphism}), and
the set of differences generated by the lines of the geometry together with the edges of the deficiency graph is given by
$$\{0_{0,1}\} \cup \{d_{i,j}: d = 1, 2, \dots, 3m+2,~~ i, j = 0, 1\}.$$
Now observe that the differences corresponding to the opposite lines are
\begin{equation}\label{eqn:DiffLines}
2_{0,1}, 2_{1,0}, 4_{0,0}, 1_{1,0}, 1_{0,1}, 2_{1,1},
\end{equation}
and the differences of $\{0_0, 0_1\}$, $\{0_1, 1_1\}$ and $\{0_0, 2_0\}$,
edges that generate the deficiency graph under the automorphism (\ref{eqn:Automorphism}), are
\begin{equation}\label{eqn:DiffDef}
0_{0,1}, 1_{1,1}, 2_{0,0}.
\end{equation}
As $i$ runs through the set $I = \{1, 2, \dots, 3m + 2\}$ the triples $\{0_0, i_0, (i/2)_1\}$
generate the differences
$$i_{0,0}, (i/2)_{0,1}, (i/2)_{1,0},~ i \in I, \textrm{~~equivalently~~} i_{0,0}, i_{0,1}, i_{1,0},~ i \in I.$$
Moreover, the two missing blocks $\{0_0, 2_0, 1_1\}$ and $\{0_0, 4_0, 2_1\}$  together with
$0_{0,1}$, $1_{1,1}$ and $2_{1,1}$,
account for the differences (\ref{eqn:DiffLines}) and (\ref{eqn:DiffDef}).
The remaining differences, $i_{1,1}$, $i = 3, 4, \dots, 3m + 2$, are provided by the triples of $T$.
Note that $(3m+3)_{1,1} = (3m+2)_{1,1}$.
\end{proof}


\section{Block size 4}\label{sec:Block size 4}
By Lemma~\ref{lem:v, b}, a pentagonal geometry with block size 4, $\adfPENT(4,r)$, has $3r + 5$ points,
and a necessary existence condition is that $r \equiv 0 \textrm{~or~} 1 \adfmod{4}$.
Using the basic construction of Theorem~\ref{thm:GDD-basic} with the degenerate $\adfPENT(4,1)$ and
4-GDDs of type $8^{3t+1}$ it is proved in \cite[Theorem 11]{GriggsStokes2016} that
there exist pentagonal geometries $\adfPENT(4,r)$ for all $r \equiv 1 \adfmod{8}$.
With the availability of new pentagonal geometries with block size 4,
Lemmas~\ref{lem:PENT-4-direct-basic} and \ref{lem:PENT-4-direct-extra}, further progress has been possible.
In \cite{ForbesGriggsStokes2020} the existence spectrum problem for $\adfPENT(4,r)$ is solved
completely when $r \equiv 1 \adfmod{4}$ and with a small number of possible exceptions when $r \equiv 0 \adfmod{4}$.
However, all of the pentagonal geometries constructed in \cite{GriggsStokes2016} and \cite{ForbesGriggsStokes2020}
contain opposite line pairs.
In this section we show that pentagonal geometries $\adfPENT(4,r)$ without opposite line pairs exist
for all admissible $r \ge 13$ with a few possible exceptions.

In Lemmas~\ref{lem:PENT-4-direct-basic}, \ref{lem:PENT-4-direct-extra} and \ref{lem:PENT-5-r-direct}
we present specific pentagonal geometries with block sizes 4 and 5.
For each $\adfPENT(k,r)$, the point set is $Z_v = \{0, 1, \dots, v - 1\}$, where $v = r(k - 1) + k + 1$, and
the lines are generated from a set of base blocks by a mapping of the form $x \mapsto x + d \adfmod{v}$.
The first $d$ base blocks are $0^{\mathrm{opp}}$, $1^{\mathrm{opp}}$, \dots, $(d - 1)^{\mathrm{opp}}$.
\begin{lemma}
\label{lem:PENT-4-direct-basic}
There exist pentagonal geometries $\adfPENT(4,r)$ with no opposite line pairs for
$r \in $ \{$13$, $17$, $20$, $21$, $24$\}.
In each case the deficiency graph is connected and has girth at least $5$.
\end{lemma}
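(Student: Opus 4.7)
The plan is to exhibit, for each $r \in \{13, 17, 20, 21, 24\}$, an explicit cyclic $\adfPENT(4,r)$ on $v = 3r+5$ points in the style described immediately before the lemma. For each $r$ I would choose a divisor $d$ of $v$ such that $rd/4$ is an integer, list $rd/4$ base blocks of size $4$ in $Z_v$, and take the lines of the geometry to be the translates of these base blocks under $x \mapsto x + d \adfmod{v}$. The first $d$ base blocks are declared to be $0^{\mathrm{opp}}, 1^{\mathrm{opp}}, \dots, (d-1)^{\mathrm{opp}}$, so that by the cyclic symmetry every point of $Z_v$ receives a well-defined opposite line. Natural parameter choices are $(v, d) = (44, 4), (56, 4), (65, 5), (68, 4), (77, 7)$, yielding $13$, $17$, $25$, $21$, $42$ base blocks respectively.

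Verification is by the difference method. A base block of size $4$ contributes six unordered pair-differences in $Z_v$; under the cyclic action of the subgroup of order $v/d$ generated by $d$, each non-zero absolute difference in $Z_v$ splits into $d$ pair-orbits. The central balance is that every such pair-orbit must appear exactly once, either as a difference generated by one of the $rd/4$ block orbits, or as an edge-orbit of the deficiency graph determined by the first $d$ base blocks. Establishing this difference balance simultaneously yields the partial-linear-space property and the correct line count through each point. Additionally, for each first-$d$ block $B_i$ one must check $i \notin B_i$, so that $B_i$ legitimately serves as $i^{\mathrm{opp}}$ after translation.

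Given the basic incidence conditions above, the three remaining properties split as follows. The absence of an opposite line pair reduces to checking that no block of the geometry coincides with the common opposite line of all four of its points; since the full opposite map is determined by the first $d$ base blocks and the cyclic action, this is a finite inspection. Assuming the pentagonal-geometry axioms hold and no opposite line pair exists, girth at least $5$ follows automatically from Lemma~\ref{lem:deficiency graphvb}. Connectedness of the $4$-regular deficiency graph is checked directly by a breadth-first traversal of its cyclically generated edges.

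The principal obstacle is not the verification, which is routine, but the discovery of base blocks that satisfy all the constraints simultaneously. The no-opposite-pair and connectedness requirements cut against the natural symmetry of the cyclic construction and eliminate many otherwise plausible candidates, while the difference constraints tightly restrict the choice of each new block. I would perform a backtracking computer search, fixing the first $d$ opposite-line base blocks first (which also fixes the deficiency graph, and hence settles connectedness and girth at the outset), then extending block-by-block while pruning on unused differences and on violation of the opposite-line-pair test. Once workable base blocks are found for all five values of $r$, the lemma is proved by tabulating them and recording that the checks above pass.
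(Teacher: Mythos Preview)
Your proposal is correct and matches the paper's approach: explicit cyclic constructions over $Z_v$ with base blocks developed under $x \mapsto x+d$, found by computer search and verified by the difference method, with the first $d$ base blocks serving as the opposite lines and the deficiency-graph properties checked directly. The only deviation is that the paper happens to use $d=8$ rather than $d=4$ for $r=17$ (so $34$ base blocks instead of $17$), but this is an immaterial parameter choice within the same scheme.
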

\begin{proof}

{\noindent\boldmath $\adfPENT(4, 13)$}~ With point set $Z_{44}$, the 143 lines are generated from

{\normalsize
 $\{3, 38, 41, 42\}$, $\{4, 27, 31, 34\}$, $\{4, 8, 13, 23\}$, $\{0, 17, 21, 26\}$,\adfsplit
 $\{0, 6, 7, 16\}$, $\{1, 14, 22, 38\}$, $\{1, 18, 30, 43\}$, $\{0, 8, 18, 37\}$,\adfsplit
 $\{0, 2, 31, 32\}$, $\{0, 11, 22, 39\}$, $\{0, 1, 20, 33\}$, $\{1, 3, 9, 29\}$,\adfsplit
 $\{1, 15, 23, 35\}$
}

\noindent under the action of the mapping $x \mapsto x + 4 \adfmod{44}$.
The deficiency graph is connected and has girth 6.

{\noindent\boldmath $\adfPENT(4, 17)$}~ With point set $Z_{56}$, the 238 lines are generated from

{\normalsize
 $\{6, 39, 52, 55\}$, $\{11, 28, 39, 42\}$, $\{4, 17, 19, 30\}$, $\{19, 42, 43, 49\}$,\adfsplit
 $\{2, 8, 13, 33\}$, $\{13, 23, 52, 53\}$, $\{0, 14, 34, 54\}$, $\{8, 24, 25, 45\}$,\adfsplit
 $\{5, 20, 2, 11\}$, $\{0, 2, 28, 36\}$, $\{0, 7, 12, 44\}$, $\{0, 4, 9, 20\}$,\adfsplit
 $\{1, 5, 36, 43\}$, $\{3, 4, 22, 46\}$, $\{3, 15, 29, 36\}$, $\{2, 6, 12, 18\}$,\adfsplit
 $\{1, 20, 22, 53\}$, $\{1, 4, 35, 38\}$, $\{2, 21, 53, 54\}$, $\{3, 39, 45, 54\}$,\adfsplit
 $\{0, 19, 26, 53\}$, $\{0, 22, 29, 31\}$, $\{1, 13, 25, 54\}$, $\{2, 10, 23, 45\}$,\adfsplit
 $\{0, 35, 43, 45\}$, $\{0, 13, 30, 41\}$, $\{0, 3, 38, 48\}$, $\{1, 6, 7, 31\}$,\adfsplit
 $\{2, 14, 31, 43\}$, $\{0, 15, 24, 51\}$, $\{1, 2, 47, 49\}$, $\{0, 18, 23, 42\}$,\adfsplit
 $\{1, 15, 19, 23\}$, $\{0, 10, 33, 49\}$
}

\noindent under the action of the mapping $x \mapsto x + 8 \adfmod{56}$.
The deficiency graph is connected and has girth 6.

{\noindent\boldmath $\adfPENT(4, 20)$}~ With point set $Z_{65}$, the 325 lines are generated from

{\normalsize
 $\{11, 20, 45, 48\}$, $\{12, 44, 48, 55\}$, $\{13, 44, 56, 63\}$, $\{7, 20, 21, 57\}$,\adfsplit
 $\{26, 27, 29, 44\}$, $\{44, 20, 33, 32\}$, $\{30, 0, 34, 29\}$, $\{0, 2, 9, 60\}$,\adfsplit
 $\{0, 6, 8, 50\}$, $\{0, 10, 26, 42\}$, $\{0, 18, 39, 59\}$, $\{0, 36, 41, 53\}$,\adfsplit
 $\{0, 43, 44, 46\}$, $\{0, 17, 38, 62\}$, $\{0, 47, 57, 63\}$, $\{0, 19, 33, 49\}$,\adfsplit
 $\{0, 27, 51, 61\}$, $\{3, 9, 38, 48\}$, $\{2, 7, 33, 58\}$, $\{1, 26, 53, 58\}$,\adfsplit
 $\{1, 34, 43, 62\}$, $\{1, 14, 24, 27\}$, $\{1, 22, 49, 57\}$, $\{1, 7, 29, 47\}$,\adfsplit
 $\{1, 9, 16, 36\}$
}

\noindent under the action of the mapping $x \mapsto x + 5 \adfmod{65}$.
The deficiency graph is connected and has girth 6.

{\noindent\boldmath $\adfPENT(4, 21)$}~ With point set $Z_{68}$, the 357 lines are generated from

{\normalsize
 $\{13, 17, 19, 37\}$, $\{32, 43, 52, 56\}$, $\{26, 35, 46, 51\}$, $\{22, 29, 38, 52\}$,\adfsplit
 $\{13, 32, 50, 62\}$, $\{14, 13, 53, 43\}$, $\{57, 60, 15, 0\}$, $\{16, 26, 22, 44\}$,\adfsplit
 $\{0, 1, 9, 16\}$, $\{0, 29, 34, 42\}$, $\{0, 12, 39, 47\}$, $\{0, 7, 14, 51\}$,\adfsplit
 $\{0, 22, 32, 63\}$, $\{0, 5, 26, 62\}$, $\{0, 25, 41, 66\}$, $\{0, 2, 3, 67\}$,\adfsplit
 $\{0, 21, 33, 43\}$, $\{1, 19, 33, 66\}$, $\{1, 39, 54, 67\}$, $\{1, 15, 35, 47\}$,\adfsplit
 $\{1, 18, 46, 63\}$
}

\noindent under the action of the mapping $x \mapsto x + 4 \adfmod{68}$.
The deficiency graph is connected and has girth 6.

{\noindent\boldmath $\adfPENT(4, 24)$}~ With point set $Z_{77}$, the 462 lines are generated from

{\normalsize
 $\{4, 10, 55, 59\}$, $\{23, 27, 30, 46\}$, $\{23, 50, 57, 58\}$, $\{19, 21, 34, 70\}$,\adfsplit
 $\{0, 36, 39, 46\}$, $\{40, 41, 47, 66\}$, $\{28, 47, 52, 57\}$, $\{47, 25, 55, 71\}$,\adfsplit
 $\{54, 34, 51, 2\}$, $\{44, 17, 8, 28\}$, $\{76, 62, 39, 33\}$, $\{46, 55, 34, 44\}$,\adfsplit
 $\{46, 13, 62, 70\}$, $\{0, 43, 54, 58\}$, $\{48, 71, 50, 75\}$, $\{75, 43, 29, 26\}$,\adfsplit
 $\{58, 20, 18, 59\}$, $\{0, 1, 6, 48\}$, $\{0, 9, 27, 34\}$, $\{1, 3, 17, 20\}$,\adfsplit
 $\{0, 8, 14, 76\}$, $\{1, 13, 18, 31\}$, $\{1, 24, 34, 45\}$, $\{2, 31, 55, 68\}$,\adfsplit
 $\{1, 19, 41, 72\}$, $\{0, 15, 50, 74\}$, $\{1, 29, 66, 67\}$, $\{1, 40, 53, 74\}$,\adfsplit
 $\{0, 2, 22, 35\}$, $\{1, 52, 59, 65\}$, $\{0, 25, 37, 73\}$, $\{2, 10, 45, 53\}$,\adfsplit
 $\{0, 11, 31, 40\}$, $\{0, 7, 30, 52\}$, $\{0, 17, 47, 67\}$, $\{0, 5, 38, 61\}$,\adfsplit
 $\{0, 3, 18, 65\}$, $\{0, 12, 51, 56\}$, $\{2, 11, 12, 74\}$, $\{2, 47, 59, 61\}$,\adfsplit
 $\{0, 32, 60, 68\}$, $\{2, 16, 33, 60\}$
}

\noindent under the action of the mapping $x \mapsto x + 7 \adfmod{77}$.
The deficiency graph is connected and has girth 6.
\end{proof}
Note, by the way, that the $\adfPENT(4,13)$ provides the missing ingredient for Constructions~1 and 2 of \cite{GriggsStokes2016}.

\begin{lemma}
\label{lem:PENT-4-direct-extra}
There exist pentagonal geometries $\adfPENT(4,r)$ with no opposite line pairs for
$r \in $
\{$29$, $33$, $37$, $40$, $45$, $49$, $52$, $53$, $60$, $61$, $65$, $69$, $77$, $80$, $81$, $85$, $93$, $97$, $100$, $101$, $108$, $109$, $117$, $120$, $125$, $133$, $140$, $141$, $149$, $157$, $160$, $165$, $173$, $180$\}.
In each case the deficiency graph is connected and has girth at least $5$.
\end{lemma}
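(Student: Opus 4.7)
The plan is to follow precisely the template established by Lemma~\ref{lem:PENT-4-direct-basic}: for each of the 34 values of $r$ in the list, I would exhibit an explicit set of base blocks on the point set $Z_v$, where $v = 3r + 5$, and declare that the full collection of lines is generated by a cyclic action $x \mapsto x + d \adfmod{v}$ for an appropriate divisor $d$ of $v$. As in Lemma~\ref{lem:PENT-4-direct-basic}, the first $d$ base blocks are taken to be $0^{\mathrm{opp}}, 1^{\mathrm{opp}}, \dots, (d-1)^{\mathrm{opp}}$, so the assignment $x \mapsto x^{\mathrm{opp}}$ is cyclic and compatible with the automorphism, and the deficiency graph is the Cayley-like graph generated by the differences of these $d$ base blocks.

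The concrete steps in order would be: (i) for each target $r$, compute $v = 3r + 5$ and the number of lines $b = vr/4$, and choose $d \mid v$ so that $b/d$ base blocks give a natural orbit decomposition; (ii) perform a computer search (hill-climbing or backtracking with symmetry reduction over $Z_v$) for base blocks whose multiset of non-zero pairwise differences together with the differences of the $d$ opposite-line blocks exhausts $Z_v \setminus \{0\}$ with each value appearing the correct number of times, so that every pair of points lies in at most one line; (iii) verify the pentagonal property, namely that for each point $x$ the block $x^{\mathrm{opp}}$ consists precisely of the points not collinear with $x$, equivalently that the deficiency graph is exactly $4$-regular and coincides with the graph read off from the $d$ opposite-line orbits; and (iv) confirm that no two lines form an opposite line pair and that the deficiency graph is connected of girth at least $5$.

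The expected main obstacle is entirely computational: no uniform closed-form construction is offered, so the base blocks for each $r$ must be discovered by search, and the search space grows rapidly, reaching $v = 545$ for $r = 180$. Feasibility depends on exploiting the cyclic group action to fix partial orbits, pruning girth-$4$ configurations incrementally as differences are added, and enforcing the opposite-line identification as a hard constraint early in the backtrack rather than verifying it at the end. Once a candidate table of base blocks is in hand for a given $r$, the verification (pair coverage, correctness of $x^{\mathrm{opp}}$ for one representative $x$ per orbit, absence of parallel opposite lines, connectivity, girth) is purely mechanical and is discharged by a short difference-counting routine, exactly as for the five instances in Lemma~\ref{lem:PENT-4-direct-basic}. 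The cardinality of the list (34 values) is therefore not itself an obstacle provided the search succeeds for each $r$; that is essentially an empirical claim, justified by presenting the tables.
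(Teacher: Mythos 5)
Your proposal matches the paper's approach exactly: the paper's proof of this lemma is simply a pointer to an appendix that lists, for each of the 34 values of $r$, explicit base blocks on $Z_{3r+5}$ generated under $x \mapsto x + d \adfmod{3r+5}$ with the first $d$ base blocks being the opposite lines, together with the (computationally verified) connectivity and girth of the deficiency graph. Your outline of the search-and-verify procedure is precisely how such tables are produced, so there is nothing methodologically different to compare.
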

\begin{proof}
The pentagonal geometries are presented in Appendix~\ref{app:PENT-4-direct-extra}.
\end{proof}

In the following we repeatedly use Theorem~\ref{thm:GDD-basic} to create the $\adfPENT(4,r)$ systems
for $r$ in each of the 22 admissible residue classes modulo 44 from a $\adfPENT(4,13)$ and $\adfPENT(4,s)$ systems for small $s$.
We prove two theorems concerning pentagonal geometries {\em without opposite line pairs}.
The first, Theorem~\ref{thm:PENT-4-0OLP-constructed-weak},
uses only a handful of $\adfPENT(4,s)$ systems, those of Lemma~\ref{lem:PENT-4-direct-basic}.
However, if we use in addition the $\adfPENT(4,s)$ systems of Lemma~\ref{lem:PENT-4-direct-extra}, we
obtain a stronger result, Theorem~\ref{thm:PENT-4-0OLP-constructed}.
For the existence of the 4-GDDs of type $g^u m^1$, used throughout the proofs, we refer to
\cite[Theorem 1.2]{Forbes2019B} when $m \neq g$, or
\cite{BrouwerSchrijverHanani1977} when $m = g$.
\begin{theorem}
\label{thm:PENT-4-0OLP-constructed-weak}
There exist pentagonal geometries $\adfPENT(4,r)$ with no opposite line pairs
for all positive integers $r > 920$, $r \equiv 0 \textrm{~or~} 1 \adfmod{4}$.
\end{theorem}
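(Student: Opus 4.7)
The plan is to apply Theorem~\ref{thm:GDD-basic} with $k = 4$ using only the five $\adfPENT(4, s)$ systems from Lemma~\ref{lem:PENT-4-direct-basic}, for which $s \in \{13, 17, 20, 21, 24\}$ and $p_s = 0$; the corresponding group sizes are $v_s \in \{44, 56, 65, 68, 77\}$. Because the construction yields $P = \sum u_i p_i = 0$ in every application, each output $\adfPENT(4, r)$ automatically has no opposite line pairs.

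Admissible $r$ (those with $r \equiv 0 \textrm{~or~} 1 \adfmod{4}$) lie in one of $22$ residue classes modulo $44$. To cover every sufficiently large $r$ in a given class, the chosen $4$-GDD must be dominated by $44$-groups, so that $r$ advances in steps of $44$ as the number $u_1$ of $44$-groups grows. I would use two master families. \emph{Family~A} consists of $4$-GDDs of type $44^{u_1} 68^k$ with $u_1 + k \equiv 1 \adfmod{3}$, overlaid with $\adfPENT(4,13)$ on the $44$-groups and $\adfPENT(4,21)$ on the $68$-groups; a short calculation shows that the output is a $\adfPENT(4, r)$ with $r = 44 t + C(k)$, where as $k$ runs through $\{0, 1, \ldots, 10\}$ the offset $C(k)$ modulo $44$ sweeps through all eleven admissible residues $\equiv 1 \adfmod{4}$. \emph{Family~B} consists of $4$-GDDs of type $44^{u_1} 56^k 65^1$ with $u_1 + k \equiv 0 \adfmod{3}$, overlaid with $\adfPENT(4,13)$, $\adfPENT(4,17)$ and $\adfPENT(4,20)$; here the output is $\adfPENT(4, r)$ with $r = 44 t + 4 k + 20$, so $(4k + 20)$ modulo $44$ sweeps through all eleven admissible residues $\equiv 0 \adfmod{4}$.

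Existence of the two-group-size types appearing in Family~A is guaranteed, for all sufficiently large $u_1$ with $k$ fixed, by \cite[Theorem 1.2]{Forbes2019B} (with the pure $44^{u_1}$ case $k = 0$ covered by \cite{BrouwerSchrijverHanani1977}). The three-group-size types in Family~B with $k \ge 1$ I would derive by combining \cite[Theorem 1.2]{Forbes2019B} with Wilson's Fundamental Construction, \cite[Theorem IV.2.5]{GreigMullen2007}, or equivalently by a recursive use of Theorem~\ref{thm:GDD-basic} in which the $65$-group is itself filled with a $\adfPENT(4, 20)$ produced by an earlier step of the argument. For each of the $22$ residue classes I would then read off the smallest admissible $r$ that the relevant master family delivers, and the maximum over the $22$ classes yields the bound $r > 920$.

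The main obstacle is the bookkeeping across the $22$ residue classes: one must pin down, class by class, the smallest $u_1$ for which the requisite $4$-GDD is guaranteed to exist and verify that the resulting minimum $r$ lies below $921$. The most delicate cases are those in Family~B with $k \ge 1$, where the $4$-GDD has three distinct group sizes and so falls outside the immediate scope of \cite[Theorem 1.2]{Forbes2019B}; constructing these cleanly and with tight thresholds is the technical heart of the proof.
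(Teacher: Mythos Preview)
Your overall strategy --- cover each of the $22$ admissible residue classes modulo $44$ using GDDs whose groups are mostly of size $44$ --- is exactly the paper's. The gap is in the GDD types you invoke.

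For Family~A with $k \ge 2$ you need $4$-GDDs of type $44^{u_1} 68^k$. These are \emph{not} of the form $g^u m^1$, so \cite[Theorem 1.2]{Forbes2019B} does not apply; that theorem handles only a single exceptional group. You flag this issue only for Family~B, but it bites Family~A just as hard for nine of your eleven values of $k$. General existence of $4$-GDDs of type $g_1^{u_1} g_2^{u_2}$ with both multiplicities large is not established in the references at hand, and deriving such designs via Wilson's construction would itself require ingredient $4$-GDDs (of types mixing $11$'s and $17$'s, say) that you have not supplied.

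The paper avoids exactly this difficulty by a two-stage recursion that stays entirely within the $g^u m^1$ world: for each residue class it first manufactures a single auxiliary $\adfPENT(4,s)$ with no opposite line pair (using Theorem~\ref{thm:GDD-basic} with a $4$-GDD of type $(3p+5)^u (3q+5)^1$ and the five basic geometries), and then feeds that $\adfPENT(4,s)$ into a $4$-GDD of type $44^{3w}(3s+5)^1$. Your ``recursive use of Theorem~\ref{thm:GDD-basic}'' fallback is precisely this mechanism, but it is the main argument, not a patch; once you adopt it, Families~A and~B are superfluous. The bound $r > 920$ then emerges from the explicit case analysis (the paper's Tables~\ref{tab:PENT-4-0OLP-constructed-weak-main} and~\ref{tab:PENT-4-0OLP-constructed-weak-required}), with the worst class being $r \equiv 40 \adfmod{44}$, where $s = 304$ forces $w \ge 15$ and hence $t \ge 21$. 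Without that analysis you have no grounds for the specific constant $920$.
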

\begin{proof}
The constructions use Theorem \ref{thm:GDD-basic} with
$3w$ copies of the $\adfPENT(4,13)$ of Lemma~\ref{lem:PENT-4-direct-basic},
a $\adfPENT(4, s)$ without an opposite line pair and
a 4-GDD of type $44^{3w} (3s+5)^1$.
If the $\adfPENT(4, s)$ is not included in Lemma~\ref{lem:PENT-4-direct-basic},
we construct it separately.
The details are presented in Table~\ref{tab:PENT-4-0OLP-constructed-weak-main}.
%
%
{\small
\begin{table}[h]
\begin{tabular}{cccccc}
$r$ & & & & & \\
mod 44 & $s$ & 4-GDD & $w$ & $\adfPENT(4,r)$ & $t$ \\
\hline
0 & 132 & $ 44^{3w} 401^1 $ & $ w \ge 7 $ & $\adfPENT(4,44t) $ \rule{0mm}{4mm} & $t \ge 10 $ \\
1 & 133 & $ 44^{3w} 404^1 $ & $ w \ge 7 $ & $\adfPENT(4,44t + 1) $ & $t \ge 10 $ \\
4 & 136 & $ 44^{3w} 413^1 $ & $ w \ge 7 $ & $\adfPENT(4,44t + 4) $ & $t \ge 10 $ \\
5 & 181 & $ 44^{3w} 548^1 $ & $ w \ge 9 $ & $\adfPENT(4,44t + 5) $ & $t \ge 13 $ \\
8 & 228 & $ 44^{3w} 689^1 $ & $ w \ge 11 $ & $\adfPENT(4,44t + 8) $ & $t \ge 16 $ \\
9 & 185 & $ 44^{3w} 560^1 $ & $ w \ge 9 $ & $\adfPENT(4,44t + 9) $ & $t \ge 13 $ \\
12 & 188 & $ 44^{3w} 569^1 $ & $ w \ge 9 $ & $\adfPENT(4,44t + 12) $ & $t \ge 13 $ \\
13 & 13 & $ 44^{3w} 44^1 $ & $ w \ge 1 $ & $\adfPENT(4,44t + 13) $ & $t \ge 1 $ \\
16 & 192 & $ 44^{3w} 581^1 $ & $ w \ge 10 $ & $\adfPENT(4,44t + 16) $ & $t \ge 14 $ \\
17 & 17 & $ 44^{3w} 56^1 $ & $ w \ge 2 $ & $\adfPENT(4,44t + 17) $ & $t \ge 2 $ \\
20 & 20 & $ 44^{3w} 65^1 $ & $ w \ge 2 $ & $\adfPENT(4,44t + 20) $ & $t \ge 2 $ \\
21 & 21 & $ 44^{3w} 68^1 $ & $ w \ge 2 $ & $\adfPENT(4,44t + 21) $ & $t \ge 2 $ \\
24 & 24 & $ 44^{3w} 77^1 $ & $ w \ge 2 $ & $\adfPENT(4,44t + 24) $ & $t \ge 2 $ \\
25 & 157 & $ 44^{3w} 476^1 $ & $ w \ge 8 $ & $\adfPENT(4,44t + 25) $ & $t \ge 11 $ \\
28 & 160 & $ 44^{3w} 485^1 $ & $ w \ge 8 $ & $\adfPENT(4,44t + 28) $ & $t \ge 11 $ \\
29 & 293 & $ 44^{3w} 884^1 $ & $ w \ge 14 $ & $\adfPENT(4,44t + 29) $ & $t \ge 20 $ \\
32 & 296 & $ 44^{3w} 893^1 $ & $ w \ge 14 $ & $\adfPENT(4,44t + 32) $ & $t \ge 20 $ \\
33 & 297 & $ 44^{3w} 896^1 $ & $ w \ge 14 $ & $\adfPENT(4,44t + 33) $ & $t \ge 20 $ \\
36 & 300 & $ 44^{3w} 905^1 $ & $ w \ge 15 $ & $\adfPENT(4,44t + 36) $ & $t \ge 21 $ \\
37 & 125 & $ 44^{3w} 380^1 $ & $ w \ge 7 $ & $\adfPENT(4,44t + 37) $ & $t \ge 9 $ \\
40 & 304 & $ 44^{3w} 917^1 $ & $ w \ge 15 $ & $\adfPENT(4,44t + 40) $ & $t \ge 21 $ \\
41 & 129 & $ 44^{3w} 392^1 $ & $ w \ge 7 $ & $\adfPENT(4,44t + 41) $ & $t \ge 9 $ \\
\end{tabular}

\vskip 2mm
\caption{Theorem~\ref{thm:PENT-4-0OLP-constructed-weak}: constructions}
\label{tab:PENT-4-0OLP-constructed-weak-main}
\end{table}
\begin{table}[h]

\begin{tabular}{ccccc|ccccc}
$r$ & $u$ & $p$ & $q$ & 4-GDD & $r$ & $u$ & $p$ & $q$ & 4-GDD \\
\hline
132 & 6 & 17 & 20 & $56^{6} 65^1$   &  133 & 6 & 17 & 21 & $56^{6} 68^1$ \rule{0mm}{4mm}\\
136 & 6 & 17 & 24 & $56^{6} 77^1$   &  181 & 9 & 17 & 13 & $56^{9} 44^1$ \\
228 & 9 & 21 & 24 & $68^{9} 77^1$   &  185 & 9 & 17 & 17 & $56^{9} 56^1$ \\
188 & 9 & 17 & 20 & $56^{9} 65^1$   &  192 & 9 & 17 & 24 & $56^{9} 77^1$ \\
157 & 6 & 21 & 21 & $68^{6} 68^1$   &  160 & 6 & 21 & 24 & $68^{6} 77^1$ \\
293 & 15 & 17 & 13 & $56^{15} 44^1$   &  296 & 12 & 21 & 24 & $68^{12} 77^1$ \\
297 & 15 & 17 & 17 & $56^{15} 56^1$   &  300 & 15 & 17 & 20 & $56^{15} 65^1$ \\
125 & 6 & 17 & 13 & $56^{6} 44^1$   &  304 & 15 & 17 & 24 & $56^{15} 77^1$ \\
129 & 6 & 17 & 17 & $56^{6} 56^1$  & & & & & \\
\end{tabular}

\vskip 2mm
\caption{Theorem~\ref{thm:PENT-4-0OLP-constructed-weak}: $\adfPENT(4,r)$ required for the constructions}
\label{tab:PENT-4-0OLP-constructed-weak-required}
\end{table}
\begin{table}[h]

\begin{tabular}{cccccc}
$r$ & & & & & \\
mod 44 & $s$ & 4-GDD & $w$ & $\adfPENT(4,r)$ & $t$ \\
\hline
0 & 132 & $ 44^{3w} 401^1 $ & $ w \ge 7 $ & $\adfPENT(4,44t) $ \rule{0mm}{4mm} & $t \ge 10 $ \\
1 & 45 & $ 44^{3w} 140^1 $ & $ w \ge 3 $ & $\adfPENT(4,44t + 1) $ & $t \ge 4 $ \\
4 & 136 & $ 44^{3w} 413^1 $ & $ w \ge 7 $ & $\adfPENT(4,44t + 4) $ & $t \ge 10 $ \\
5 & 49 & $ 44^{3w} 152^1 $ & $ w \ge 3 $ & $\adfPENT(4,44t + 5) $ & $t \ge 4 $ \\
8 & 52 & $ 44^{3w} 161^1 $ & $ w \ge 3 $ & $\adfPENT(4,44t + 8) $ & $t \ge 4 $ \\
9 & 53 & $ 44^{3w} 164^1 $ & $ w \ge 3 $ & $\adfPENT(4,44t + 9) $ & $t \ge 4 $ \\
12 & 100 & $ 44^{3w} 305^1 $ & $ w \ge 5 $ & $\adfPENT(4,44t + 12) $ & $t \ge 7 $ \\
13 & 13 & $ 44^{3w} 44^1 $ & $ w \ge 1 $ & $\adfPENT(4,44t + 13) $ & $t \ge 1 $ \\
16 & 60 & $ 44^{3w} 185^1 $ & $ w \ge 4 $ & $\adfPENT(4,44t + 16) $ & $t \ge 5 $ \\
17 & 17 & $ 44^{3w} 56^1 $ & $ w \ge 2 $ & $\adfPENT(4,44t + 17) $ & $t \ge 2 $ \\
20 & 20 & $ 44^{3w} 65^1 $ & $ w \ge 2 $ & $\adfPENT(4,44t + 20) $ & $t \ge 2 $ \\
21 & 21 & $ 44^{3w} 68^1 $ & $ w \ge 2 $ & $\adfPENT(4,44t + 21) $ & $t \ge 2 $ \\
24 & 24 & $ 44^{3w} 77^1 $ & $ w \ge 2 $ & $\adfPENT(4,44t + 24) $ & $t \ge 2 $ \\
25 & 69 & $ 44^{3w} 212^1 $ & $ w \ge 4 $ & $\adfPENT(4,44t + 25) $ & $t \ge 5 $ \\
28 & 160 & $ 44^{3w} 485^1 $ & $ w \ge 8 $ & $\adfPENT(4,44t + 28) $ & $t \ge 11 $ \\
29 & 29 & $ 44^{3w} 92^1 $ & $ w \ge 2 $ & $\adfPENT(4,44t + 29) $ & $t \ge 2 $ \\
32 & 120 & $ 44^{3w} 365^1 $ & $ w \ge 6 $ & $\adfPENT(4,44t + 32) $ & $t \ge 8 $ \\
33 & 33 & $ 44^{3w} 104^1 $ & $ w \ge 2 $ & $\adfPENT(4,44t + 33) $ & $t \ge 2 $ \\
36 & 80 & $ 44^{3w} 245^1 $ & $ w \ge 5 $ & $\adfPENT(4,44t + 36) $ & $t \ge 6 $ \\
37 & 37 & $ 44^{3w} 116^1 $ & $ w \ge 3 $ & $\adfPENT(4,44t + 37) $ & $t \ge 3 $ \\
40 & 40 & $ 44^{3w} 125^1 $ & $ w \ge 3 $ & $\adfPENT(4,44t + 40) $ & $t \ge 3 $ \\
41 & 85 & $ 44^{3w} 260^1 $ & $ w \ge 5 $ & $\adfPENT(4,44t + 41) $ & $t \ge 6 $ \\
\end{tabular}

\vskip 2mm
\caption{Theorem~\ref{thm:PENT-4-0OLP-constructed}: constructions}
\label{tab:PENT-4-0OLP-constructed-main}
\end{table}
\begin{table}[h]

\begin{tabular}{ccccc|ccccc}
$r$ & $u$ & $p$ & $q$ & 4-GDD & $r$ & $u$ & $p$ & $q$ & 4-GDD\\
\hline
129 & 6 & 17 & 17 & $56^{6} 56^1$  & 176 & 6 & 21 & 40 & $68^{6} 125^1$ \rule{0mm}{4mm}\\
188 & 9 & 17 & 20 & $56^{9} 65^1$  & 192 & 9 & 17 & 24 & $56^{9} 77^1$ \\
201 & 6 & 29 & 17 & $92^{6} 56^1$  & 204 & 6 & 29 & 20 & $92^{6} 65^1$ \\
208 & 6 & 29 & 24 & $92^{6} 77^1$  & 217 & 9 & 21 & 13 & $68^{9} 44^1$ \\
220 & 9 & 17 & 52 & $56^{9} 161^1$  & 224 & 9 & 21 & 20 & $68^{9} 65^1$ \\
232 & 6 & 33 & 24 & $104^{6} 77^1$  & 248 & 12 & 17 & 24 & $56^{12} 77^1$ \\
252 & 6 & 37 & 20 & $116^{6} 65^1$  & 256 & 6 & 37 & 24 & $116^{6} 77^1$ \\
261 & 6 & 37 & 29 & $116^{6} 92^1$  & 264 & 12 & 17 & 40 & $56^{12} 125^1$ \\
268 & 6 & 33 & 60 & $104^{6} 185^1$  & 276 & 12 & 17 & 52 & $56^{12} 161^1$ \\
292 & 12 & 21 & 20 & $68^{12} 65^1$  & 296 & 9 & 29 & 20 & $92^{9} 65^1$ \\
312 & 12 & 21 & 40 & $68^{12} 125^1$  & 336 & 9 & 33 & 24 & $104^{9} 77^1$ \\
340 & 15 & 17 & 60 & $56^{15} 185^1$  & 352 & 9 & 33 & 40 & $104^{9} 125^1$ \\
356 & 18 & 17 & 20 & $56^{18} 65^1$  & 380 & 15 & 21 & 40 & $68^{15} 125^1$ \\
396 & 18 & 17 & 60 & $56^{18} 185^1$  & 400 & 9 & 37 & 52 & $116^{9} 161^1$ \\
424 & 6 & 65 & 24 & $200^{6} 77^1$  & 468 & 24 & 17 & 20 & $56^{24} 65^1$ \\
\end{tabular}

\vskip 2mm
\caption{Theorem~\ref{thm:PENT-4-0OLP-constructed}: missing values}
\label{tab:PENT-4-0OLP-constructed-missing}

\end{table}
\begin{table}[h]

\begin{tabular}{cccccc}
$r$ & & & & & \\
mod 44 & $s$ & 4-GDD & $w$ & $\adfPENT(4,r)$ & $t$ \\
\hline
0 & 1496 & $ 44^{3w} 4493^1 $ & $ w \ge 69 $ & $\adfPENT(4,44t) $ \rule{0mm}{4mm} & $t \ge 103 $ \\
1 & 1 & $ 44^{3w} 8^1 $ & $ w \ge 2 $ & $\adfPENT(4,44t + 1) $ & $t \ge 2 $ \\
4 & 1236 & $ 44^{3w} 3713^1 $ & $ w \ge 57 $ & $\adfPENT(4,44t + 4) $ & $t \ge 85 $ \\
5 & 137 & $ 44^{3w} 416^1 $ & $ w \ge 7 $ & $\adfPENT(4,44t + 5) $ & $t \ge 10 $ \\
8 & 976 & $ 44^{3w} 2933^1 $ & $ w \ge 45 $ & $\adfPENT(4,44t + 8) $ & $t \ge 67 $ \\
9 & 185 & $ 44^{3w} 560^1 $ & $ w \ge 9 $ & $\adfPENT(4,44t + 9) $ & $t \ge 13 $ \\
12 & 1156 & $ 44^{3w} 3473^1 $ & $ w \ge 53 $ & $\adfPENT(4,44t + 12) $ & $t \ge 79 $ \\
13 & 233 & $ 44^{3w} 704^1 $ & $ w \ge 11 $ & $\adfPENT(4,44t + 13) $ & $t \ge 16 $ \\
16 & 2876 & $ 44^{3w} 8633^1 $ & $ w \ge 132 $ & $\adfPENT(4,44t + 16) $ & $t \ge 197 $ \\
17 & 281 & $ 44^{3w} 848^1 $ & $ w \ge 14 $ & $\adfPENT(4,44t + 17) $ & $t \ge 20 $ \\
20 & 3056 & $ 44^{3w} 9173^1 $ & $ w \ge 140 $ & $\adfPENT(4,44t + 20) $ & $t \ge 209 $ \\
21 & 417 & $ 44^{3w} 1256^1 $ & $ w \ge 20 $ & $\adfPENT(4,44t + 21) $ & $t \ge 29 $ \\
24 & 2796 & $ 44^{3w} 8393^1 $ & $ w \ge 128 $ & $\adfPENT(4,44t + 24) $ & $t \ge 191 $ \\
25 & 113 & $ 44^{3w} 344^1 $ & $ w \ge 6 $ & $\adfPENT(4,44t + 25) $ & $t \ge 8 $ \\
28 & 1876 & $ 44^{3w} 5633^1 $ & $ w \ge 86 $ & $\adfPENT(4,44t + 28) $ & $t \ge 128 $ \\
29 & 205 & $ 44^{3w} 620^1 $ & $ w \ge 10 $ & $\adfPENT(4,44t + 29) $ & $t \ge 14 $ \\
32 & 2276 & $ 44^{3w} 6833^1 $ & $ w \ge 104 $ & $\adfPENT(4,44t + 32) $ & $t \ge 155 $ \\
33 & 209 & $ 44^{3w} 632^1 $ & $ w \ge 10 $ & $\adfPENT(4,44t + 33) $ & $t \ge 14 $ \\
36 & 2016 & $ 44^{3w} 6053^1 $ & $ w \ge 93 $ & $\adfPENT(4,44t + 36) $ & $t \ge 138 $ \\
37 & 169 & $ 44^{3w} 512^1 $ & $ w \ge 9 $ & $\adfPENT(4,44t + 37) $ & $t \ge 12 $ \\
40 & 1756 & $ 44^{3w} 5273^1 $ & $ w \ge 81 $ & $\adfPENT(4,44t + 40) $ & $t \ge 120 $ \\
41 & 305 & $ 44^{3w} 920^1 $ & $ w \ge 15 $ & $\adfPENT(4,44t + 41) $ & $t \ge 21 $ \\
\end{tabular}

\vskip 2mm
\caption{Theorem~\ref{thm:PENT-4-1OLP-constructed-weak}: constructions}
\label{tab:PENT-4-1OLP-constructed-weak-main}
\end{table}
\begin{table}[h]

\begin{tabular}{cccc|cccc}
$s$ & $u$ & $p$ & 4-GDD & $s$ & $u$ & $p$ & 4-GDD \\
\hline
1496 & 69 & 20 & $65^{69} 8^1$   &  1236 & 57 & 20 & $65^{57} 8^1$ \rule{0mm}{4mm}\\
137 & 6 & 21 & $68^{6} 8^1$   &  976 & 45 & 20 & $65^{45} 8^1$ \\
185 & 6 & 29 & $92^{6} 8^1$   &  1156 & 45 & 24 & $77^{45} 8^1$ \\
233 & 6 & 37 & $116^{6} 8^1$   &  2876 & 69 & 40 & $125^{69} 8^1$ \\
281 & 15 & 17 & $56^{15} 8^1$   &  3056 & 141 & 20 & $65^{141} 8^1$ \\
417 & 12 & 33 & $104^{12} 8^1$   &  2796 & 129 & 20 & $65^{129} 8^1$ \\
113 & 6 & 17 & $56^{6} 8^1$   &  1876 & 45 & 40 & $125^{45} 8^1$ \\
205 & 9 & 21 & $68^{9} 8^1$   &  2276 & 105 & 20 & $65^{105} 8^1$ \\
209 & 6 & 33 & $104^{6} 8^1$   &  2016 & 93 & 20 & $65^{93} 8^1$ \\
169 & 9 & 17 & $56^{9} 8^1$   &  1756 & 81 & 20 & $65^{81} 8^1$ \\
305 & 6 & 49 & $152^{6} 8^1$  & & & & \\
\end{tabular}

\vskip 2mm
\caption{Theorem~\ref{thm:PENT-4-1OLP-constructed-weak}: $\adfPENT(4,r)$ required for the constructions}
\label{tab:PENT-4-1OLP-constructed-weak-required}
\end{table}
}
For instance, in the first row of Table~\ref{tab:PENT-4-0OLP-constructed-weak-main}
we are to construct pentagonal geometries for residue class 0 modulo 44, $\adfPENT(4,44t)$.
We take $3w$ copies of a $\adfPENT(4,13)$ (which has 44 points), a $\adfPENT(4,132)$ (401 points),
and a 4-GDD of type $44^{3w} 401^1$, which exists if and only if $w \ge 7$, \cite[Theorem 1.2]{Forbes2019B}.
Therefore, by Theorem~\ref{thm:GDD-basic}, there exists a
$\adfPENT(4,44w + 132)$ for $w \ge 7$;
i.e.\ there exists a $\adfPENT(4,44t)$ for $t \ge 10$.

For the constructions to work, we require $\adfPENT(4,s)$ systems for
those values of $s$ in column 2 of Table~\ref{tab:PENT-4-0OLP-constructed-weak-main}, namely
$s \in $
\{$13$, $17$, $20$, $21$, $24$, $125$, $129$, $132$, $133$, $136$, $157$, $160$, $181$, $185$, $188$, $192$, $228$, $293$, $296$, $297$, $300$, $304$\}.
The $\adfPENT(4,s)$ that are not given by Lemma~\ref{lem:PENT-4-direct-basic} are constructed by Theorem~\ref{thm:GDD-basic} from
$u$ copies of a $\adfPENT(4,p)$, one copy of a $\adfPENT(4,q)$ and a 4-GDD of type $(3p+5)^u (3q+5)^1$,
as presented in Table~\ref{tab:PENT-4-0OLP-constructed-weak-required}.
See Lemma~\ref{lem:PENT-4-direct-basic} for the existence of the $\adfPENT(4,p)$ and the $\adfPENT(4,q)$.

The largest $r$ for which the constructions do not create a $\adfPENT(4,r)$ is $r = 920$
corresponding to $t = 20$ in entry 40 of
Table~\ref{tab:PENT-4-0OLP-constructed-weak-main}.
\end{proof}
\begin{theorem}
\label{thm:PENT-4-0OLP-constructed}
There exist pentagonal geometries $\adfPENT(4,r)$ with no opposite line pairs
for all positive integers $r \equiv 0 \textrm{~or~} 1 \adfmod{4}$
except for $r \in $
\{$1$, $4$, $5$\}
and except possibly for
$r \in $
\{$8$, $9$, $12$, $16$, $25$, $28$, $32$, $36$, $41$, $44$, $48$, $56$, $64$, $68$, $72$, $73$, $76$, $84$, $88$, $89$, $92$, $96$, $104$, $113$, $116$, $124$, $128$, $137$, $144$, $148$, $164$, $168$, $212$, $308$\}.
\end{theorem}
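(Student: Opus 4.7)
The plan is to reprise the strategy of Theorem~\ref{thm:PENT-4-0OLP-constructed-weak}: for each admissible residue class modulo $44$, invoke Theorem~\ref{thm:GDD-basic} with $3w$ copies of the PENT$(4,13)$ from Lemma~\ref{lem:PENT-4-direct-basic}, a single PENT$(4,s)$ seed, and a $4$-GDD of type $44^{3w}(3s+5)^1$ whose existence is supplied by \cite[Theorem 1.2]{Forbes2019B}. The crucial difference from the weak version is that the $34$ new direct geometries of Lemma~\ref{lem:PENT-4-direct-extra} are now available as seeds. This allows one to pick a much smaller $s$ in each residue class, which in turn shrinks the minimum $w$ for which the corresponding $4$-GDD exists, and hence substantially lowers the $r$-threshold above which the residue class is covered.

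Concretely, I would proceed in three stages. First, for each of the $22$ admissible residue classes modulo $44$, I would select the smallest feasible $s$ from Lemmas~\ref{lem:PENT-4-direct-basic} and~\ref{lem:PENT-4-direct-extra} lying in that residue class and for which the required $4$-GDD exists, producing the $22$-entry Table~\ref{tab:PENT-4-0OLP-constructed-main}; each row then yields PENT$(4,r)$ without opposite line pairs for all $r$ in that residue class above an explicit bound. Second, for those larger seeds demanded by the construction but not listed in Lemmas~\ref{lem:PENT-4-direct-basic} or~\ref{lem:PENT-4-direct-extra}, I would build them in turn by a secondary application of Theorem~\ref{thm:GDD-basic}, combining $u$ copies of a small PENT$(4,p)$ with one PENT$(4,q)$ across a $4$-GDD of type $(3p+5)^u(3q+5)^1$, as catalogued in Table~\ref{tab:PENT-4-0OLP-constructed-missing}. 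Third, I would take the union of the admissible $r$ produced by all these constructions with the direct list, and verify that the complement, intersected with the admissible residues modulo $4$, is precisely the stated exceptional set.

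The three definite exceptions $r\in\{1,4,5\}$ are accounted for by prior non-existence results: $r=1$ gives only the degenerate PENT$(4,1)$ (which is a single opposite line pair), while PENT$(4,4)$ and PENT$(4,5)$ are both excluded by Lemma~\ref{lem:(k,k), (k,k+1)}. All other failures of the construction — those $r$ that are admissible but for which no combination of seed plus $4$-GDD in the tables reaches $r$ — constitute the list of possible exceptions in the theorem statement.

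The main obstacle is not conceptual but bookkeeping: one must, for each row of Table~\ref{tab:PENT-4-0OLP-constructed-main}, confirm that the $4$-GDD of type $44^{3w}(3s+5)^1$ exists for the stated range of $w$ via \cite[Theorem 1.2]{Forbes2019B}, check that the seed PENT$(4,s)$ is genuinely available (directly or through Table~\ref{tab:PENT-4-0OLP-constructed-missing}), and finally match the union of ranges against the admissible residues modulo $4$ below the largest threshold to produce exactly the exception list quoted. The residue classes $0$, $4$, $8$, $12$, $16$, $20$, $24$, $28$, $32$ and $36$ modulo $44$ are the ones that stay stubbornly out of reach for small $r$, and it is these which account for most of the possible exceptions in the statement.
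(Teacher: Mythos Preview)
Your proposal is correct and follows essentially the same approach as the paper: use Theorem~\ref{thm:GDD-basic} with $3w$ copies of PENT$(4,13)$, a seed PENT$(4,s)$ drawn from Lemmas~\ref{lem:PENT-4-direct-basic} and~\ref{lem:PENT-4-direct-extra}, and a $4$-GDD of type $44^{3w}(3s+5)^1$ for each residue class modulo $44$, then mop up the residual small $r$ by secondary applications of Theorem~\ref{thm:GDD-basic}. One small clarification: in the paper the secondary constructions split into two pieces --- a few seeds $s\in\{132,136,160\}$ needed for Table~\ref{tab:PENT-4-0OLP-constructed-main} are built via Table~\ref{tab:PENT-4-0OLP-constructed-weak-required}, while Table~\ref{tab:PENT-4-0OLP-constructed-missing} handles the leftover small $r$ themselves (and several further $r$ in the leftover set are covered directly by Lemma~\ref{lem:PENT-4-direct-extra}) --- but the mechanism you describe is the same in each case.
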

\begin{proof}

The only $\adfPENT(4,1)$ consists of just an opposite line pair.
By Lemma~\ref{lem:(k,k), (k,k+1)}, $\adfPENT(4, 4)$ and $\adfPENT(4, 5)$ do not exist.

The main constructions are similar to those of Theorem~\ref{thm:PENT-4-0OLP-constructed-weak}.
For each admissible residue class modulo 44,
we use Theorem \ref{thm:GDD-basic} with $3w$ copies of the $\adfPENT(4,13)$ of Lemma~\ref{lem:PENT-4-direct-basic},
an opposite-line-pair-free $\adfPENT(4, s)$ and
a 4-GDD of type $44^{3w} (3s+5)^1$.
The details are presented in Table~\ref{tab:PENT-4-0OLP-constructed-main}.

For the main constructions, we require $\adfPENT(4,s)$ systems for
those values of $s$ in column 2 of Table~\ref{tab:PENT-4-0OLP-constructed-main}, namely $s \in E$, where
$E =$
\{$13$, $17$, $20$, $21$, $24$, $29$, $33$, $37$, $40$, $45$, $49$, $52$, $53$, $60$, $69$, $80$, $85$, $100$, $120$, $132$, $136$, $160$\}.
The $\adfPENT(4,s)$ for $s \in E$ that are not given by Lemmas~\ref{lem:PENT-4-direct-basic} and \ref{lem:PENT-4-direct-extra},
namely $s = 132$, 136 and 160, are constructed as indicated in
Table~\ref{tab:PENT-4-0OLP-constructed-weak-required} of Theorem~\ref{thm:PENT-4-0OLP-constructed-weak}.

The values of $r = 44 t + \rho$ for which the main constructions do not produce a $\adfPENT(4,44t + \rho)$, i.e.\ where
$r \notin E$ and $t$ is less than the
minimum stated in Table~\ref{tab:PENT-4-0OLP-constructed-main}, are given by $r \in X$ =
\{$1$, $4$, $5$, $8$, $9$, $12$, $16$, $25$, $28$, $32$, $36$, $41$, $44$, $48$, $56$, $61$, $64$, $65$, $68$, $72$, $73$, $76$, $77$, $81$, $84$, $88$, $89$, $92$, $93$, $96$, $97$, $104$, $113$, $116$, $124$, $125$, $128$, $129$, $133$, $137$, $140$, $141$, $144$, $148$, $157$, $164$, $168$, $173$, $176$, $180$, $188$, $192$, $201$, $204$, $208$, $212$, $217$, $220$, $224$, $232$, $248$, $252$, $256$, $261$, $264$, $268$, $276$, $292$, $296$, $308$, $312$, $336$, $340$, $352$, $356$, $380$, $396$, $400$, $424$, $468$\}.

For $r \in$
\{$61$, $65$, $77$, $81$, $93$, $97$, $125$, $133$, $140$, $141$, $157$, $173$, $180$\},
see Lemma~\ref{lem:PENT-4-direct-extra}.
For the remaining $r \in X$ that are not listed as exceptions and possible exceptions in the statement of the theorem,
a $\adfPENT(4,r)$ is constructed by Theorem~\ref{thm:GDD-basic} from
$u$ copies of a $\adfPENT(4, p)$, one copy of a $\adfPENT(4, q)$ and a 4-GDD of type $(3p+5)^u (3q+5)^1$,
as shown in Table~\ref{tab:PENT-4-0OLP-constructed-missing}.
See Lemmas~\ref{lem:PENT-4-direct-basic} and \ref{lem:PENT-4-direct-extra} for the
existence of the $\adfPENT(4,p)$ and the $\adfPENT(4,q)$.
%
\end{proof}
%
\begin{theorem}
\label{thm:PENT-4-1OLP-constructed-weak}
If $r > 9172$ and $r \equiv 0 \textrm{~or~} 1 \adfmod{4}$,
there exists a pentagonal geometry $\adfPENT(4,r)$ with precisely one opposite line pair.
\end{theorem}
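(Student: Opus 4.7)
The plan is to mimic the two-stage GDD construction used in the proof of Theorem~\ref{thm:PENT-4-0OLP-constructed-weak}, but arranged so that exactly one opposite line pair survives. In the outer stage I take a 4-GDD of type $44^{3w}(3s+5)^1$ and overlay it by Theorem~\ref{thm:GDD-basic} with $3w$ copies of the $\adfPENT(4,13)$ of Lemma~\ref{lem:PENT-4-direct-basic} (which contributes no opposite line pairs) together with a single $\adfPENT(4,s)$ containing exactly one opposite line pair. By the formula $P = \sum u_i p_i$ of Theorem~\ref{thm:GDD-basic}, the result is a $\adfPENT(4, 44w + s)$ with exactly one opposite line pair.

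The inner stage manufactures the required $\adfPENT(4,s)$ with precisely one opposite line pair. For this I combine $u$ copies of an opposite-line-pair-free $\adfPENT(4,p)$ (drawn from Lemmas~\ref{lem:PENT-4-direct-basic} and \ref{lem:PENT-4-direct-extra}) with one copy of the degenerate $\adfPENT(4,1)$ on 8 points, whose two blocks form the unique opposite line pair, by overlaying a 4-GDD of type $(3p+5)^u\, 8^1$. Theorem~\ref{thm:GDD-basic} then yields a $\adfPENT(4, up + 5u/3 + 1)$ with exactly one opposite line pair, provided $u \equiv 0 \adfmod{3}$ so that the parameters remain integral. Existence of all the required 4-GDDs (both in the inner and the outer stage, and with second group size distinct from the uniform group size in every case) follows from \cite[Theorem 1.2]{Forbes2019B}.

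The remaining work is entirely a bookkeeping exercise: for each of the 22 admissible residue classes $\rho \in \{0,1,4,5,8,9,\dots,41\}$ modulo 44, I must choose a concrete $s \equiv \rho \adfmod{44}$ and a pair $(p,u)$ in the inner stage so that both 4-GDDs exist with small parameters, so that $3w \ge $ the bound from \cite{Forbes2019B}, and so that the resulting $r = 44w + s$ covers the appropriate residue class. This is exactly the content of Tables~\ref{tab:PENT-4-1OLP-constructed-weak-main} and \ref{tab:PENT-4-1OLP-constructed-weak-required}. The largest uncovered value of $r$ is determined by the slowest-starting residue class in Table~\ref{tab:PENT-4-1OLP-constructed-weak-main}, and it turns out to be $r = 9172$, arising from the row for $r \equiv 20 \adfmod{44}$ at $t = 208$.

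The main obstacle is not conceptual but combinatorial: selecting the inner-stage parameters $(p,u)$ for each residue class so that the required $\adfPENT(4,p)$ is available from Lemma~\ref{lem:PENT-4-direct-basic} or \ref{lem:PENT-4-direct-extra}, the GDD existence conditions from \cite[Theorem 1.2]{Forbes2019B} are met, and the initial $t$ is as small as one can manage. Once the tables are populated and each 4-GDD is validated, the construction runs and the proof reduces to verification of the stated bound $r > 9172$.
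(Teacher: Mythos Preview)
Your proposal is correct and follows the paper's proof essentially verbatim: the same two-stage application of Theorem~\ref{thm:GDD-basic}, with the outer 4-GDD of type $44^{3w}(3s+5)^1$ and the inner 4-GDD of type $(3p+5)^u 8^1$ carrying a single copy of the degenerate $\adfPENT(4,1)$. One small correction: the paper does not rely solely on \cite[Theorem~1.2]{Forbes2019B} for the inner-stage designs, since several of the group sizes $3p+5$ in Table~\ref{tab:PENT-4-1OLP-constructed-weak-required} (namely $65$, $77$, $125$) are odd, and for those the existence of the 4-GDD of type $g^u 8^1$ is quoted from \cite[Theorem~7.1]{WeiGe2015} instead.
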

\begin{proof}
The proof is similar to that of Theorem~\ref{thm:PENT-4-0OLP-constructed-weak}.
For each admissible residue class modulo 44,
we use Theorem \ref{thm:GDD-basic} with $3w$ copies of the $\adfPENT(4,13)$ of Lemma~\ref{lem:PENT-4-direct-basic},
a $\adfPENT(4, s)$ with exactly one opposite line pair, and
a 4-GDD of type $44^{3w} (3s+5)^1$, \cite[Theorem 1.2]{Forbes2019B},
as shown in Table~\ref{tab:PENT-4-1OLP-constructed-weak-main}.

For $s \neq 1$, the $\adfPENT(4,s)$ required for the constructions are built from
$u$ copies of a $\adfPENT(4,p)$ from Theorem~\ref{thm:PENT-4-0OLP-constructed},
a $\adfPENT(4,1)$ and a 4-GDD of type $(3p+5)^u 8^1$,
as indicated in Table~\ref{tab:PENT-4-1OLP-constructed-weak-required}.
For the 4-GDDs of type $g^u 8^1$, see \cite[Theorem 1.2]{Forbes2019B} when $g$ is even and \cite[Theorem 7.1]{WeiGe2015} when $g$ is odd.

The largest $r$ for which the constructions fail is $r = 9172$
corresponding to $t = 208$ in entry 20 of Table~\ref{tab:PENT-4-1OLP-constructed-weak-main}.
\end{proof}

Finally in this section we prove a more general theorem.
\begin{theorem}
\label{thm:PENT-4-jOLP-constructed-weak}
Given non-negative integer $j$, for all sufficiently large $r \equiv 0 \textrm{~or~} 1 \adfmod{4}$,
there exists a pentagonal geometry $\adfPENT(4,r)$ with precisely $j$ opposite line pairs.
\end{theorem}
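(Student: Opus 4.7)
The cases $j = 0$ and $j = 1$ are already covered by Theorems~\ref{thm:PENT-4-0OLP-constructed-weak} and \ref{thm:PENT-4-1OLP-constructed-weak} respectively, so assume $j \ge 2$. The strategy is to mimic the proof of Theorem~\ref{thm:PENT-4-1OLP-constructed-weak}, the difference being that the single copy of the degenerate $\adfPENT(4,1)$ used there is now replaced by $j$ such copies. The construction breaks into two stages: first a finite family of ``seed'' pentagonal geometries is assembled, one per admissible residue class modulo $44$; then each seed is blown up using copies of the $\adfPENT(4,13)$ from Lemma~\ref{lem:PENT-4-direct-basic}.

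Stage one (seed construction): I apply Theorem~\ref{thm:GDD-basic} to $j$ copies of the degenerate $\adfPENT(4,1)$ (each with $8$ points and one opposite line pair) together with $u$ copies of some $\adfPENT(4,p)$ without opposite line pairs, chosen from Theorem~\ref{thm:PENT-4-0OLP-constructed}. The ingredient design is a $4$-GDD of type $8^j (3p+5)^u$, whose existence follows from standard $4$-GDD machinery (\cite[Theorem~1.2]{Forbes2019B} and its extensions) once $u$ is sufficiently large and the usual divisibility conditions are met. The result is a $\adfPENT(4,s)$ with $s = j + up + 5(j+u-1)/3$ and exactly $j$ opposite line pairs. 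Integrality of $s$ forces $u \equiv 1 - j \pmod{3}$, which is always achievable by adjusting $u$ in steps of $3$.

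Stage two (extension of the seed): following the approach of Theorem~\ref{thm:PENT-4-1OLP-constructed-weak}, I combine one copy of the seed $\adfPENT(4,s)$ with $3w$ copies of the $\adfPENT(4,13)$ from Lemma~\ref{lem:PENT-4-direct-basic} via a $4$-GDD of type $44^{3w}(3s+5)^1$, again guaranteed by \cite[Theorem~1.2]{Forbes2019B} for all sufficiently large $w$. Theorem~\ref{thm:GDD-basic} then yields a $\adfPENT(4,\, 44w+s)$ with exactly $j$ opposite line pairs, and letting $w$ grow covers every sufficiently large $r$ in the residue class $s \pmod{44}$.

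It remains to hit each of the $22$ admissible residues modulo $44$. Since $p$ ranges over a cofinite subset of the admissible values (Theorem~\ref{thm:PENT-4-0OLP-constructed}) and $u$ may be incremented by $3$, the residue $s \bmod 44$ can be steered to any admissible $\rho \in \{0,1,4,5,\dots,40,41\}$. The principal obstacle, and the reason the theorem is only stated for sufficiently large $r$, lies in stage one: for each fixed $j$ and each target residue $\rho$, one must exhibit a concrete pair $(p,u)$ with $u \equiv 1 - j \pmod{3}$ such that a $4$-GDD of type $8^j(3p+5)^u$ exists and $j + up + 5(j+u-1)/3 \equiv \rho \pmod{44}$. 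This is a finite case analysis paralleling Tables~\ref{tab:PENT-4-1OLP-constructed-weak-main} and \ref{tab:PENT-4-1OLP-constructed-weak-required}, and it succeeds because the flexibility in choosing $p$ (including arbitrarily large $p$ from iterated application of Theorem~\ref{thm:PENT-4-0OLP-constructed}) comfortably outweighs the $22$ residue constraints for any fixed~$j$.
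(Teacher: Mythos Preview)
Your two-stage architecture matches the paper's, and Stage~two (extending the seed via a $4$-GDD of type $44^{3w}(3s+5)^1$) is identical. The difference lies in Stage~one, and there your argument has a genuine gap.

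You need a $4$-GDD of type $8^j(3p+5)^u$ with $j\ge 2$ groups of size~$8$. You cite \cite[Theorem~1.2]{Forbes2019B}, but that result concerns only types $g^u m^1$, i.e.\ a single exceptional group. A type with $j\ge 2$ groups of one size and $u$ groups of another is not covered, and ``its extensions'' is not a reference. Existence of $4$-GDDs of type $g_1^{a_1}g_2^{a_2}$ with both $a_1,a_2\ge 2$ is not settled in general, so this step is not justified as written. The residue argument at the end is also only a plausibility sketch rather than a proof.

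The paper avoids this difficulty entirely. Instead of mixing group sizes, it uses a \emph{uniform} $4$-GDD of type $(3a+5)^{12t+1}$, whose existence is classical \cite{BrouwerSchrijverHanani1977}. The $j$ opposite line pairs are introduced not by $j$ degenerate $\adfPENT(4,1)$'s on small groups, but by filling $j$ of the equal-size groups with a $\adfPENT(4,a)$ having exactly one opposite line pair (supplied by Theorem~\ref{thm:PENT-4-1OLP-constructed-weak}) and the remaining $12t+1-j$ groups with a $\adfPENT(4,a)$ having none (Theorem~\ref{thm:PENT-4-0OLP-constructed}). Because the two ingredient geometries have the \emph{same} number of points, only uniform GDDs are needed. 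The residue condition $(12t+1)a+20t\equiv\rho\pmod{44}$ is then solvable for $a$ by choosing $t$ with $\gcd(12t+1,44)=1$, which makes the coverage of all $22$ admissible residues explicit rather than heuristic.
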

\begin{proof}
For each $\rho \in \{0, 4, \dots, 40\} \cup \{1, 5, \dots, 41\}$, the set of admissible residues modulo 44,
perform the following constructions.

Choose integers $a$ and $t$ such that:
\begin{enumerate}
\item[]$t \ge 1$,~~ $12t + 1 \ge j$,~~ $\gcd(12t + 1, 44) = 1$,
\item[]$(12t + 1)a + 20t \equiv \rho \adfmod{44}$,
\item[]there exists a $\adfPENT(4,a)$, $A$, with no opposite line pair,
\item[]there exists a $\adfPENT(4,a)$, $A'$, with exactly one opposite line pair.
\end{enumerate}
For $a$ and $t$ chosen as indicated, there exists a 4-GDD of type $(3a + 5)^{12t+1}$,
\cite{BrouwerSchrijverHanani1977}.
Also $a \equiv \rho \adfmod{4}$ and therefore the existence of the pentagonal geometries $A$ and $A'$
follows from Theorems~\ref{thm:PENT-4-0OLP-constructed} and \ref{thm:PENT-4-1OLP-constructed-weak}
provided $a$ is sufficiently large.
Using Theorem~\ref{thm:GDD-basic} with $12t + 1 - j$ copies of $A$, $j$ copies of $A'$ and the 4-GDD,
create a $\adfPENT(4, s)$ with exactly $j$ opposite line pairs, where
$$s = (12t + 1)a + 20t \equiv \rho \adfmod{44}.$$
Now use Theorem~\ref{thm:GDD-basic} with $3w$ copies of the $\adfPENT(4,13)$ from Lemma~\ref{lem:PENT-4-direct-basic},
one copy of the $\adfPENT(4, s)$,
and a 4-GDD of type $44^{3w} (3s + 5)^1$, \cite[Theorem 1.2]{Forbes2019B},
to create for all sufficiently large $w$ a
$\adfPENT(4, 44 w + s)$, i.e.\ a $\adfPENT(4, 44 (w + \lfloor s/44\rfloor) + \rho)$, with exactly $j$ opposite line pairs.
\end{proof}


\section{Block size 5}\label{sec:Block size 5}
By Lemma~\ref{lem:v, b}, a pentagonal geometry with block size 5, $\adfPENT(5,r)$, has $4r + 6$ points,
and a necessary existence condition is that $r \equiv 0$ or 1 (mod~5).
We begin by presenting some $5$-GDDs, which we believe to be new.

\begin{lemma}
\label{lem:5-GDDs-direct}
There exist $5$-$\mathrm{GDD}$s of types
$ 2^{35} $,
$ 2^{71} $,
$ 10^{23} $,
$ 2^{40} 6^1 $,
$ 4^{40} 12^1 $,
$ 10^{10} 18^1 $ and
$ 20^{10} 36^1 $.
\end{lemma}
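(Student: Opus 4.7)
The plan is to construct each of the seven $5$-GDDs directly by exhibiting a small family of base blocks together with a cyclic (or near-cyclic) automorphism group that preserves the group partition setwise. The block set is then the union of the orbits of the base blocks under the chosen automorphism, and verification reduces to a routine check that every unordered cross-group pair appears exactly once in the difference multiset generated by the base blocks.

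For the pure types, natural skeletons are as follows. For $2^{35}$, take $V = Z_{70}$ with groups $\{i, i + 35\}$ for $0 \le i < 35$ and automorphism $x \mapsto x + 1 \adfmod{70}$. For $2^{71}$, take $V = Z_{142}$ with groups $\{i, i + 71\}$ for $0 \le i < 71$ and automorphism $x \mapsto x + 1 \adfmod{142}$. For $10^{23}$, a convenient model is $V = Z_{23} \times Z_{10}$ with groups $\{i\} \times Z_{10}$ and automorphism $(i, j) \mapsto (i + 1, j)$, so that only $Z_{23}$ acts. In each case the group partition is preserved setwise, and one has only to produce base blocks whose cross-group differences in the induced difference structure cover every non-group value exactly once.

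For the mixed types $g^u m^1$, namely $2^{40} 6^1$, $4^{40} 12^1$, $10^{10} 18^1$ and $20^{10} 36^1$, I would adjoin $m$ ``infinity'' points to form the long group. Specifically, take $V = Z_{gu} \cup \{\infty_0, \dots, \infty_{m-1}\}$; the short groups are the residue classes of $Z_{gu}$ modulo $u$, and the long group is $\{\infty_0, \dots, \infty_{m-1}\}$. The automorphism acts as $x \mapsto x + 1$ on $Z_{gu}$ (preserving the short group partition setwise) and either fixes each $\infty_\ell$ or permutes them on a short orbit whose length divides $gu$, chosen so that every pair involving an infinity point is covered exactly once by the orbit of a base block containing that point.

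The main obstacle is finding suitable base blocks, particularly for the larger designs such as $10^{23}$ and $20^{10} 36^1$, where the search space is large and analytic difference-family constructions do not obviously apply; in practice this is carried out by computer search, possibly using a partial starter together with a hill-climbing or backtracking completion. Once candidate base blocks are produced, the verification is entirely mechanical: one counts the cross-group differences generated by each base block, sums the multisets with multiplicity equal to the orbit length divided by the full period, and checks that the result matches the required target (each cross-group difference occurring exactly once).
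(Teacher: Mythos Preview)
Your proposal is an outline of a method, not a proof. The lemma asserts existence, and for results of this kind the proof \emph{is} the explicit list of base blocks together with the automorphism; saying ``carry out a computer search and then verify mechanically'' leaves the entire content of the argument unsupplied. The paper's proof does exactly what you describe in spirit, but it actually exhibits, for each of the seven types, a concrete point set, a concrete group partition, a concrete list of base blocks, and a concrete generating map. Without those data there is nothing to check and hence nothing proved.

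There is also a specific arithmetic obstruction in your skeleton for type $2^{35}$. With $V=Z_{70}$, groups $\{i,i+35\}$, and the full cyclic shift $x\mapsto x+1$, the automorphism group has order $70$, but the design has $238$ blocks, and $70\nmid 238$. So any construction along these lines must involve short orbits, which you do not allow for; in fact the paper avoids this by taking $34$ of the groups as residue classes modulo $34$ on $\{0,\dots,67\}$, adjoining a fixed group $\{68,69\}$, and using an automorphism of order $17$ (namely $x\mapsto x+4$ on the first $68$ points, identity on the last two), giving $14$ base blocks with $14\times 17=238$. Your other skeletons are numerically consistent, but the paper's actual choices (e.g.\ $x\mapsto x+2$ on $Z_{142}$ for $2^{71}$, $x\mapsto x+2$ on $Z_{230}$ for $10^{23}$, and for the mixed types an automorphism that cycles the long group on a short orbit rather than fixing it pointwise) differ in detail from yours, and those details matter once one tries to write down base blocks.
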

\begin{proof}
We write $Z_v$ for the set $\{0, 1,\dots, v - 1\}$.
The expression $a \adfmod{b}$ denotes the integer $n$ that satisfies $0 \le n < b$ and $b|n - a$.
%

\adfhide{
$ 2^{35} $,
$ 2^{71} $,
$ 10^{23} $,
$ 2^{40} 6^1 $,
$ 4^{40} 12^1 $,
$ 10^{10} 18^1 $ and
$ 20^{10} 36^1 $.
}

\adfDgap
\noindent{\boldmath $ 2^{35} $}~
With the point set $Z_{70}$ partitioned into
 residue classes modulo $34$ for $\{0, 1, \dots, 67\}$, and
 $\{68, 69\}$,
 the design is generated from

\adfLgap {\adfBfont
$\{45, 4, 46, 61, 7\}$,
$\{53, 36, 21, 47, 59\}$,
$\{14, 12, 68, 33, 7\}$,\adfsplit
$\{0, 1, 10, 67, 69\}$,
$\{1, 5, 13, 38, 59\}$,
$\{0, 9, 30, 33, 38\}$,\adfsplit
$\{0, 5, 45, 48, 62\}$,
$\{0, 13, 32, 54, 61\}$,
$\{0, 37, 39, 55, 59\}$,\adfsplit
$\{1, 11, 14, 46, 51\}$,
$\{0, 6, 18, 31, 58\}$,
$\{0, 26, 27, 46, 50\}$,\adfsplit
$\{0, 7, 15, 51, 66\}$,
$\{0, 4, 12, 28, 47\}$

}
\adfLgap \noindent by the mapping:
$x \mapsto x + 4 j \adfmod{68}$ for $x < 68$,
$x \mapsto x$ for $x \ge 68$,
$0 \le j < 17$.
\ADFvfyParStart{(70, ((14, 17, ((68, 4), (2, 2)))), ((2, 34), (2, 1)))} 

\adfDgap
\noindent{\boldmath $ 2^{71} $}~
With the point set $Z_{142}$ partitioned into
 residue classes modulo $71$ for $\{0, 1, \dots, 141\}$,
 the design is generated from

\adfLgap {\adfBfont
$\{50, 79, 70, 2, 62\}$,
$\{111, 131, 43, 136, 52\}$,
$\{83, 52, 87, 140, 98\}$,\adfsplit
$\{34, 7, 117, 62, 10\}$,
$\{110, 60, 126, 21, 81\}$,
$\{0, 1, 3, 22, 41\}$,\adfsplit
$\{0, 4, 14, 61, 112\}$,
$\{0, 6, 38, 78, 101\}$,
$\{0, 2, 67, 75, 111\}$,\adfsplit
$\{0, 11, 18, 33, 45\}$,
$\{0, 5, 51, 81, 129\}$,
$\{1, 7, 17, 59, 87\}$,\adfsplit
$\{0, 7, 56, 99, 125\}$,
$\{0, 13, 36, 116, 141\}$

}
\adfLgap \noindent by the mapping:
$x \mapsto x + 2 j \adfmod{142}$,
$0 \le j < 71$.
\ADFvfyParStart{(142, ((14, 71, ((142, 2)))), ((2, 71)))} 

\adfDgap
\noindent{\boldmath $ 10^{23} $}~
With the point set $Z_{230}$ partitioned into
 residue classes modulo $23$ for $\{0, 1, \dots, 229\}$,
 the design is generated from

\adfLgap {\adfBfont
$\{121, 4, 20, 51, 64\}$,
$\{168, 186, 144, 93, 63\}$,
$\{15, 137, 1, 140, 118\}$,\adfsplit
$\{16, 100, 138, 212, 101\}$,
$\{181, 13, 63, 70, 84\}$,
$\{141, 228, 54, 170, 35\}$,\adfsplit
$\{198, 101, 20, 187, 155\}$,
$\{113, 74, 180, 38, 102\}$,
$\{165, 26, 61, 66, 99\}$,\adfsplit
$\{0, 2, 5, 164, 226\}$,
$\{0, 7, 160, 185, 189\}$,
$\{0, 21, 61, 89, 198\}$,\adfsplit
$\{0, 26, 76, 176, 221\}$,
$\{0, 8, 134, 144, 199\}$,
$\{0, 17, 72, 131, 151\}$,\adfsplit
$\{0, 27, 148, 197, 215\}$,
$\{0, 20, 103, 177, 203\}$,
$\{0, 15, 165, 181, 182\}$,\adfsplit
$\{0, 41, 51, 123, 129\}$,
$\{0, 147, 169, 171, 205\}$,
$\{0, 12, 110, 140, 153\}$,\adfsplit
$\{1, 9, 85, 129, 141\}$

}
\adfLgap \noindent by the mapping:
$x \mapsto x + 2 j \adfmod{230}$,
$0 \le j < 115$.
\ADFvfyParStart{(230, ((22, 115, ((230, 2)))), ((10, 23)))} 

\adfDgap
\noindent{\boldmath $ 2^{40} 6^{1} $}~
With the point set $Z_{86}$ partitioned into
 residue classes modulo $40$ for $\{0, 1, \dots, 79\}$, and
 $\{80, 81, \dots, 85\}$,
 the design is generated from

\adfLgap {\adfBfont
$\{51, 31, 63, 50, 27\}$,
$\{4, 7, 69, 34, 71\}$,
$\{0, 2, 7, 73, 80\}$,\adfsplit
$\{0, 6, 49, 75, 82\}$,
$\{0, 20, 41, 51, 79\}$,
$\{0, 11, 45, 62, 81\}$,\adfsplit
$\{0, 8, 17, 23, 64\}$,
$\{0, 19, 27, 52, 77\}$,
$\{0, 4, 14, 46, 58\}$

}
\adfLgap \noindent by the mapping:
$x \mapsto x + 2 j \adfmod{80}$ for $x < 80$,
$x \mapsto (x - 80 + 3 j \adfmod{6}) + 80$ for $x \ge 80$,
$0 \le j < 40$.
\ADFvfyParStart{(86, ((9, 40, ((80, 2), (6, 3)))), ((2, 40), (6, 1)))} 

\adfDgap
\noindent{\boldmath $ 4^{40} 12^{1} $}~
With the point set $Z_{172}$ partitioned into
 residue classes modulo $40$ for $\{0, 1, \dots, 159\}$, and
 $\{160, 161, \dots, 171\}$,
 the design is generated from

\adfLgap {\adfBfont
$\{126, 41, 170, 39, 80\}$,
$\{76, 48, 127, 11, 75\}$,
$\{75, 27, 20, 5, 52\}$,\adfsplit
$\{0, 3, 9, 134, 160\}$,
$\{0, 18, 61, 115, 162\}$,
$\{0, 10, 21, 34, 93\}$,\adfsplit
$\{0, 5, 19, 76, 129\}$,
$\{0, 4, 12, 42, 104\}$,
$\{0, 16, 33, 82, 102\}$

}
\adfLgap \noindent by the mapping:
$x \mapsto x +  j \adfmod{160}$ for $x < 160$,
$x \mapsto (x - 160 + 3 j \adfmod{12}) + 160$ for $x \ge 160$,
$0 \le j < 160$.
\ADFvfyParStart{(172, ((9, 160, ((160, 1), (12, 3)))), ((4, 40), (12, 1)))} 

\adfDgap
\noindent{\boldmath $ 10^{10} 18^{1} $}~
With the point set $Z_{118}$ partitioned into
 residue classes modulo $9$ for $\{0, 1, \dots, 89\}$,
 $\{90, 91, \dots, 99\}$, and
 $\{100, 101, \dots, 117\}$,
 the design is generated from

\adfLgap {\adfBfont
$\{23, 19, 29, 81, 16\}$,
$\{24, 20, 30, 82, 17\}$,
$\{53, 101, 36, 48, 99\}$,\adfsplit
$\{54, 102, 37, 49, 90\}$,
$\{109, 48, 4, 64, 15\}$,
$\{0, 8, 43, 90, 111\}$,\adfsplit
$\{0, 26, 59, 75, 94\}$,
$\{1, 15, 35, 61, 115\}$,
$\{0, 21, 22, 61, 113\}$,\adfsplit
$\{0, 1, 20, 76, 114\}$,
$\{0, 2, 50, 69, 100\}$,
$\{0, 23, 31, 66, 95\}$,\adfsplit
$\{0, 37, 79, 97, 106\}$,
$\{0, 29, 51, 53, 108\}$

}
\adfLgap \noindent by the mapping:
$x \mapsto x + 2 j \adfmod{90}$ for $x < 90$,
$x \mapsto (x + 2 j \adfmod{10}) + 90$ for $90 \le x < 100$,
$x \mapsto (x - 100 + 2 j \adfmod{18}) + 100$ for $x \ge 100$,
$0 \le j < 45$.
\ADFvfyParStart{(118, ((14, 45, ((90, 2), (10, 2), (18, 2)))), ((10, 9), (10, 1), (18, 1)))} 

\adfDgap
\noindent{\boldmath $ 20^{10} 36^{1} $}~
With the point set $Z_{236}$ partitioned into
 residue classes modulo $9$ for $\{0, 1, \dots, 179\}$,
 $\{180, 181, \dots, 199\}$, and
 $\{200, 201, \dots, 235\}$,
 the design is generated from

\adfLgap {\adfBfont
$\{30, 181, 74, 177, 99\}$,
$\{85, 54, 70, 158, 119\}$,
$\{135, 134, 235, 40, 64\}$,\adfsplit
$\{197, 34, 32, 233, 87\}$,
$\{199, 70, 225, 59, 13\}$,
$\{0, 3, 7, 13, 181\}$,\adfsplit
$\{0, 5, 22, 197, 220\}$,
$\{0, 23, 137, 196, 233\}$,
$\{0, 14, 35, 93, 119\}$,\adfsplit
$\{0, 19, 60, 116, 213\}$,
$\{0, 8, 121, 151, 229\}$,
$\{0, 40, 91, 138, 200\}$,\adfsplit
$\{0, 20, 48, 148, 223\}$,
$\{0, 12, 74, 142, 226\}$

}
\adfLgap \noindent by the mapping:
$x \mapsto x +  j \adfmod{180}$ for $x < 180$,
$x \mapsto (x +  j \adfmod{20}) + 180$ for $180 \le x < 200$,
$x \mapsto (x - 200 +  j \adfmod{36}) + 200$ for $x \ge 200$,
$0 \le j < 180$.
\ADFvfyParStart{(236, ((14, 180, ((180, 1), (20, 1), (36, 1)))), ((20, 9), (20, 1), (36, 1)))} 
%
\end{proof}

The next two lemmas are special cases of well-known GDD constructions.
\begin{lemma}
\label{lem:5-GDD-g-to-gh}
Suppose there exists a $5$-$\mathrm{GDD}$ of type $g_1^{u_1} g_2^{u_2} \dots g_n^{u_n}$.
Then for positive integer $h \notin \{2, 3, 6, 10\}$,
there exists a $5$-$\mathrm{GDD}$ of type $(g_1 h)^{u_1} (g_2 h)^{u_2} \dots (g_n h)^{u_n}$.
\end{lemma}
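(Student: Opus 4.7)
The plan is to apply the standard ``inflation'' construction: blow up each point of the given $5$-GDD by a factor of $h$ and overlay each block with a transversal design $\mathrm{TD}(5,h)$.

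More precisely, let $(V,\mathcal{G},\mathcal{B})$ be the given $5$-GDD of type $g_1^{u_1}\cdots g_n^{u_n}$, and put $V^\ast = V \times \{0,1,\dots,h-1\}$. I would take the new groups to be $G \times \{0,1,\dots,h-1\}$ for each $G \in \mathcal{G}$, so that the new partition has type $(g_1 h)^{u_1} \cdots (g_n h)^{u_n}$. For each block $B = \{x_1,x_2,x_3,x_4,x_5\} \in \mathcal{B}$, I would place on the point set $B \times \{0,1,\dots,h-1\}$ a $\mathrm{TD}(5,h)$ whose five groups are precisely the sets $\{x_i\} \times \{0,1,\dots,h-1\}$, $i = 1,\dots,5$. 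The new block set $\mathcal{B}^\ast$ is the union of the block sets of all these transversal designs.

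The verification is routine. Any two points $(x,i)$ and $(y,j)$ of $V^\ast$ lying in distinct new groups come from points $x,y$ of $V$ in distinct original groups of $\mathcal{G}$; those points lie in a unique block $B \in \mathcal{B}$, and within the $\mathrm{TD}(5,h)$ overlaid on $B$, the pair $\{(x,i),(y,j)\}$ belongs to distinct TD-groups and so lies in exactly one TD-block. Conversely, two points in the same new group are either $(x,i),(x,j)$ (not covered, since a TD-block meets each TD-group in a single point) or $(x,i),(y,j)$ with $x,y$ in the same original group (then no $B \in \mathcal{B}$ contains both, so no TD is ever overlaid on a set containing both). Hence each cross-group pair is covered exactly once and no same-group pair is covered, as required.

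The only genuine ingredient is the existence of $\mathrm{TD}(5,h)$, equivalently of three mutually orthogonal Latin squares of side $h$. This is where the hypothesis $h \notin \{2,3,6,10\}$ enters: as recalled in the excerpt, a $\mathrm{TD}(k+2,q)$ exists whenever there exist $k$ MOLS of side $q$, and it is classical that three MOLS of side $h$ exist for every positive integer $h$ outside $\{2,3,6,10\}$. Since no other case analysis is needed, this is essentially the entire proof; the only step that could be viewed as an ``obstacle'' is simply quoting the MOLS existence result to produce the required $\mathrm{TD}(5,h)$.
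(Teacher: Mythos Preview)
Your proof is correct and essentially identical to the paper's: the paper also inflates each point by $h$ and overlays each block with a $5$-GDD of type $h^5$ (which is exactly a $\mathrm{TD}(5,h)$), citing the known existence of such a design for $h \notin \{2,3,6,10\}$. The only cosmetic difference is that you phrase the ingredient as three MOLS of side $h$ whereas the paper quotes the equivalent $5$-GDD of type $h^5$ directly.
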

\begin{proof}
Inflate each point of the 5-GDD by a factor of $h$ and replace the blocks with 5-GDDs of type $h^5$.
There exists a 5-GDD of type $h^5$ for $h \ge 1$, $h \not\in \{2, 3, 6, 10\}$, \cite[Theorem IV.4.16]{Ge2007}.
\end{proof}
\begin{lemma}
\label{lem:4-RGDD-to-5-GDD}
Suppose there exists a $4$-$\mathrm{RGDD}$ of type $g^u$.
Then there exists a $5$-$\mathrm{GDD}$ of type $g^u (g(u - 1)/3)^1$.
\end{lemma}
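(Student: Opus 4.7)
The plan is to enlarge each block of the $4$-RGDD to a block of size $5$ by adjoining a new point that represents its parallel class, and to take those new points as a single extra group.

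First I would count the parallel classes. In a $4$-RGDD of type $g^u$, a fixed point is paired with each of the $g(u-1)$ points in other groups exactly once, and every block through it accounts for $3$ such pairs, so its replication number is $r = g(u-1)/3$. Since each parallel class contributes exactly one block through this point, the number of parallel classes is also $r = g(u-1)/3$. In particular this shows that the size of the projected new group matches the value stated in the lemma.

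Next I would perform the construction. Label the parallel classes $P_1, P_2, \dots, P_r$ and introduce new symbols $\infty_1, \infty_2, \dots, \infty_r$, disjoint from the original point set $V$. For every block $B \in P_i$ (with $1 \le i \le r$) put $B \cup \{\infty_i\}$ into the new block collection $\mathcal{B}'$. Declare the groups of the new design to be the original $u$ groups of size $g$, together with the group $\{\infty_1, \dots, \infty_r\}$ of size $r$.

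Finally I would verify the $5$-GDD axioms. Every block in $\mathcal{B}'$ has size $5$. A pair of original points from the same group still appears in no block. A pair of original points from distinct groups appeared in exactly one block of the $4$-RGDD, hence in exactly one block of $\mathcal{B}'$. A pair $\{x, \infty_i\}$ with $x \in V$ appears together precisely in the unique block of $P_i$ containing $x$, so exactly once. Finally, $\infty_i$ and $\infty_j$ with $i \ne j$ never appear together, because each block of $\mathcal{B}'$ contains only one $\infty$-symbol. No step presents any real obstacle; the only point requiring care is the identification of $r$ with $g(u-1)/3$ in the first paragraph.
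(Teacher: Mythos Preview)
Your proposal is correct and follows exactly the same construction as the paper: adjoin one new point per parallel class and place these new points in a single extra group. Your write-up is in fact more detailed than the paper's, since you explicitly derive the number of parallel classes and verify all the GDD axioms.
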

\begin{proof}
Let $d = g(u - 1)/3$.
The blocks of the 4-RGDD are partitioned into $d$ parallel classes, $P_1$, $P_2$, \dots, $P_d$, say.
Create a new group of size $d$ consisting of points $p_1$, $p_2$, \dots, $p_d$ and augment each block of $P_i$ with $p_i$, $i = 1$, 2, \dots, $d$.
\end{proof}

With the 5-GDD of type $2^{35}$  we are able to offer a small improvement over \cite[Theorem 12]{GriggsStokes2016}.
\begin{theorem}
\label{thm:PENT-5-1mod5-constructed}
There exists a pentagonal geometry $\adfPENT(5,r)$ for all $r \equiv 1 \adfmod{5}$, $r \neq 6$, except possibly for
$r \in \{11, 16, 36, 56, 66, 81, 96, 116\}.$
\end{theorem}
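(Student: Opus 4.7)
The plan is to extend \cite[Theorem 12]{GriggsStokes2016} by applying Theorem~\ref{thm:GDD-basic} to the new 5-GDDs of Lemma~\ref{lem:5-GDDs-direct}. The Griggs--Stokes result already establishes existence of $\adfPENT(5,r)$ for all admissible $r \equiv 1 \adfmod{5}$ with $r \neq 6$, apart from a slightly larger exception set; the task is to remove from that exception set the value(s) newly handled here.

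The key new ingredient is the 5-GDD of type $2^{35}$. Using Lemma~\ref{lem:5-GDD-g-to-gh} with inflation factor $h = 5$ (permitted since $5 \notin \{2,3,6,10\}$), I obtain a 5-GDD of type $10^{35}$. I then apply Theorem~\ref{thm:GDD-basic} with $k = 5$, $n = 1$, $u_1 = 35$ and $r_1 = 1$, overlaying each of the $35$ groups of size $v_1 = 10$ with the degenerate $\adfPENT(5,1)$. With $R = N = 35$ and $(k+1)/(k-1) = 3/2$, this produces a $\adfPENT(5,\, 35 + \tfrac{3}{2}(35-1)) = \adfPENT(5, 86)$, a value not covered by \cite[Theorem 12]{GriggsStokes2016}.

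To finish, I would check that after incorporating this construction (and, where applicable, analogous applications of Theorem~\ref{thm:GDD-basic} to the other 5-GDDs from Lemma~\ref{lem:5-GDDs-direct}, possibly after inflation via Lemma~\ref{lem:5-GDD-g-to-gh} or combined with the 4-RGDD-based recipe of Lemma~\ref{lem:4-RGDD-to-5-GDD}), the only admissible $r \equiv 1 \adfmod{5}$ with $r \neq 6$ left unaccounted for are exactly those in $\{11, 16, 36, 56, 66, 81, 96, 116\}$. The main obstacle is this final bookkeeping step: once one sees precisely which $r$ each available 5-GDD supplies, via the formula $r = R + 3(N-1)/2$ coming from Theorem~\ref{thm:GDD-basic}, the residual improvement is mechanical.
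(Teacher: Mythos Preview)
Your proposal is correct and matches the paper's proof essentially line for line: the only case beyond \cite[Theorem~12]{GriggsStokes2016} is $r=86$, and you construct $\adfPENT(5,86)$ exactly as the paper does, by inflating the 5-GDD of type $2^{35}$ to type $10^{35}$ via Lemma~\ref{lem:5-GDD-g-to-gh} and applying Theorem~\ref{thm:GDD-basic} with the degenerate $\adfPENT(5,1)$. The hedging in your final paragraph is unnecessary---no other 5-GDDs from Lemma~\ref{lem:5-GDDs-direct} are needed here, since $r=86$ is the sole value removed from the Griggs--Stokes exception list.
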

\begin{proof}
This is Theorem 12 of \cite{GriggsStokes2016} apart from $r = 86$, which is stated as a possible exception.

For $\adfPENT(5,86)$, we use Theorem~\ref{thm:GDD-basic} with the degenerate $\adfPENT(5,1)$ and a 5-GDD of type $10^{35}$.
The 5-GDD is constructed from the 5-GDD of type $2^{35}$ of Lemma~\ref{lem:5-GDDs-direct} by Lemma~\ref{lem:5-GDD-g-to-gh} with $h = 5$.
\end{proof}

To deal with the other residue class, $r \equiv 0 \adfmod{5}$, we first present some new
pentagonal geometries with block size 5.
\begin{lemma}
\label{lem:PENT-5-r-direct}
There exist pentagonal geometries $\adfPENT(5,r)$ with no opposite line pairs for
$r \in $ \{$20$, $25$, $30$, $35$, $40$\}.
\end{lemma}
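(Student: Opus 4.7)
The plan is to follow the direct-construction template already used in Lemma~\ref{lem:PENT-4-direct-basic}: for each $r \in \{20, 25, 30, 35, 40\}$ I would exhibit a list of base blocks on the point set $Z_v$, where $v = 4r + 6 \in \{86, 106, 126, 146, 166\}$, developed cyclically under an automorphism $x \mapsto x + d \adfmod{v}$ for a suitable divisor $d$ of $v$. Since the total number of lines of a $\adfPENT(5,r)$ is $vr/5$, the number of base blocks equals $rd/5$, of which the first $d$ represent the orbits of opposite lines $0^{\mathrm{opp}}, 1^{\mathrm{opp}}, \dots, (d-1)^{\mathrm{opp}}$ and the remaining $d(r-5)/5$ are ordinary lines. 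Each $v$ here is congruent to $2 \adfmod{4}$, so the most economical first choice is $d = 2$, giving $8$, $10$, $12$, $14$, $16$ base blocks respectively---small enough to tabulate and to verify by hand once the list is found.

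The two conditions that the base blocks must satisfy are both ``difference-cover'' conditions on $Z_v$. First, the geometry must be a partial linear space: tallying the $\binom{5}{2}=10$ pairwise differences from each base block, every nonzero element of $Z_v$ must be represented exactly once, where the differences coming from the $d$ opposite-line base blocks are viewed as \emph{edges of the deficiency graph} rather than as line differences. Second, each opposite line $x^{\mathrm{opp}}$ must consist of exactly the five points not collinear with $x$; translation-invariantly, this means that for $x \equiv i \adfmod{d}$ the cyclic translate of the $i$-th opposite-line base block by $\lfloor x/d\rfloor d$ must equal the set of non-neighbours of $x$ in the deficiency graph already determined by the opposite-line base blocks. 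Finally, to guarantee there are no opposite line pairs it suffices, by Lemma~\ref{lem:deficiency graphvb}, to check that the deficiency graph contains no $K_{5,5}$ component---equivalently, that no opposite line is simultaneously $x^{\mathrm{opp}}$ for five of its own collinear partners.

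The production of the base blocks is a computer search, and this is the main obstacle. I would run a backtracking enumeration that first fixes the $d$ opposite-line base blocks (which by themselves determine the deficiency graph and hence condition the rest of the design), then appends ordinary base blocks one at a time, at each step pruning candidates that duplicate an already-used difference, close a $4$-cycle in the deficiency graph, or violate the compatibility condition that $x^{\mathrm{opp}}$ lists the non-neighbours of $x$. To make the search tractable at $v = 166$ I would impose an additional small symmetry---for example an involution of $Z_v$ stabilising the base-block set---paralleling techniques standard in such constructions; once a solution is found, all verifications reduce to routine bookkeeping on differences and on the adjacency structure of the $5$-regular deficiency graph, so the whole argument can be presented simply as a list of base blocks together with the ``developed modulo $v$'' statement, exactly as in Lemma~\ref{lem:PENT-4-direct-basic}.
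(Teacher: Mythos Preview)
Your proposal is correct and matches the paper's approach essentially exactly: the paper constructs each $\adfPENT(5,r)$ on $Z_v$ with $v=4r+6$, developed under $x\mapsto x+2\adfmod v$, listing precisely the $8,10,12,14,16$ base blocks you predicted, with the first two being $0^{\mathrm{opp}}$ and $1^{\mathrm{opp}}$. The only minor deviations are that the paper records the girth of each deficiency graph (and notes connectedness) rather than merely checking for $K_{5,5}$ components, and it does not need the extra involutory symmetry you contemplate for $v=166$---the plain cyclic search with $d=2$ already succeeds.
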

\begin{proof}

{\noindent\boldmath $\adfPENT(5, 20)$}~ With point set $Z_{86}$, the 344 lines are generated from

{\normalsize
 $\{20, 41, 65, 66, 67\}$, $\{20, 22, 33, 46, 55\}$, $\{46, 19, 35, 82, 75\}$,\adfsplit
 $\{50, 19, 1, 80, 13\}$, $\{4, 58, 62, 80, 46\}$, $\{49, 77, 34, 82, 85\}$,\adfsplit
 $\{0, 5, 8, 14, 71\}$, $\{0, 17, 27, 31, 69\}$
}

\noindent under the action of the mapping $x \mapsto x + 2 \adfmod{86}$.
The deficiency graph is connected and has girth 5.

{\noindent\boldmath $\adfPENT(5, 25)$}~ With point set $Z_{106}$, the 530 lines are generated from

{\normalsize
 $\{13, 37, 42, 64, 71\}$, $\{36, 53, 55, 70, 94\}$, $\{32, 33, 28, 43, 59\}$,\adfsplit
 $\{0, 3, 35, 41, 97\}$, $\{0, 25, 33, 44, 103\}$, $\{0, 21, 39, 61, 81\}$,\adfsplit
 $\{0, 2, 47, 51, 100\}$, $\{0, 9, 23, 30, 46\}$, $\{0, 12, 32, 75, 105\}$,\adfsplit
 $\{0, 10, 28, 66, 80\}$
}

\noindent under the action of the mapping $x \mapsto x + 2 \adfmod{106}$.
The deficiency graph is connected and has girth 6.

{\noindent\boldmath $\adfPENT(5, 30)$}~ With point set $Z_{126}$, the 756 lines are generated from

{\normalsize
 $\{29, 31, 36, 90, 117\}$, $\{10, 23, 96, 98, 105\}$, $\{36, 48, 56, 91, 81\}$,\adfsplit
 $\{68, 94, 39, 25, 50\}$, $\{0, 1, 4, 52, 111\}$, $\{0, 6, 17, 34, 102\}$,\adfsplit
 $\{0, 22, 61, 91, 125\}$, $\{0, 5, 32, 42, 112\}$, $\{0, 3, 50, 66, 87\}$,\adfsplit
 $\{0, 15, 23, 64, 113\}$, $\{0, 47, 73, 93, 105\}$, $\{1, 5, 53, 71, 77\}$
}

\noindent under the action of the mapping $x \mapsto x + 2 \adfmod{126}$.
The deficiency graph is connected and has girth 5.

{\noindent\boldmath $\adfPENT(5, 35)$}~ With point set $Z_{146}$, the 1022 lines are generated from

{\normalsize
 $\{34, 69, 112, 119, 133\}$, $\{14, 28, 73, 75, 78\}$, $\{33, 40, 93, 22, 62\}$,\adfsplit
 $\{77, 129, 111, 38, 41\}$, $\{0, 1, 2, 25, 127\}$, $\{0, 5, 9, 122, 138\}$,\adfsplit
 $\{0, 15, 37, 93, 121\}$, $\{0, 43, 60, 123, 135\}$, $\{0, 26, 67, 105, 113\}$,\adfsplit
 $\{0, 49, 55, 65, 97\}$, $\{0, 6, 38, 89, 115\}$, $\{0, 19, 28, 58, 70\}$,\adfsplit
 $\{0, 20, 74, 101, 131\}$, $\{0, 4, 48, 84, 94\}$
}

\noindent under the action of the mapping $x \mapsto x + 2 \adfmod{146}$.
The deficiency graph is connected and has girth 6.

{\noindent\boldmath $\adfPENT(5, 40)$}~ With point set $Z_{166}$, the 1328 lines are generated from

{\normalsize
 $\{42, 124, 155, 157, 163\}$, $\{4, 10, 12, 61, 107\}$, $\{141, 41, 148, 155, 51\}$,\adfsplit
 $\{35, 131, 151, 84, 147\}$, $\{121, 94, 53, 138, 154\}$, $\{0, 1, 26, 29, 30\}$,\adfsplit
 $\{0, 5, 23, 36, 45\}$, $\{0, 10, 35, 102, 119\}$, $\{0, 41, 53, 79, 114\}$,\adfsplit
 $\{0, 11, 43, 85, 129\}$, $\{0, 13, 37, 71, 101\}$, $\{0, 21, 77, 104, 132\}$,\adfsplit
 $\{0, 14, 70, 89, 161\}$, $\{0, 15, 38, 58, 145\}$, $\{0, 12, 73, 88, 112\}$,\adfsplit
 $\{0, 18, 40, 86, 134\}$
}

\noindent under the action of the mapping $x \mapsto x + 2 \adfmod{166}$.
The deficiency graph is connected and has girth 6.
\end{proof}
To employ constructions similar to those described in Section~\ref{sec:Block size 4}
we require suitable group divisible designs with block size 5 and type $g^u m^1$.
Unfortunately relatively few seem to be available.
Nevertheless, with 5-GDDs of type $g^u$
we can create infinite sets of pentagonal geometries albeit somewhat sparser than those of
Theorem~\ref{thm:PENT-5-1mod5-constructed}.
\begin{theorem}
\label{thm:PENT-5-constructed}
Suppose there exists a $\adfPENT(5, r)$ with $r \ge 20$.

$\mathrm{(i)}$ If $r \equiv 0 \adfmod{5}$, then for $t \ge 0$,
there exists a $\adfPENT(5, (10r + 15)t + r)$.

$\mathrm{(ii)}$ If $r \equiv 0 \adfmod{5}$, then for $t \ge 0$,
there exists a $\adfPENT(5, (10r + 15)t + 5r + 6)$ except possibly when
$r \le 1220$, $r \not\equiv 0 \adfmod{3}$ and $t = 1$.

$\mathrm{(iii)}$ If $r \equiv 1 \adfmod{5}$,
then for $t \ge 2$, there exists a $\adfPENT(5, (2r + 3)t + r)$.
\end{theorem}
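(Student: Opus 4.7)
The plan in each case is to invoke Theorem~\ref{thm:GDD-basic} with $N$ copies of the hypothesized $\adfPENT(5, r)$ overlaid on the groups of a suitable $5$-GDD of type $v^N$, where $v = 4r + 6$ is the number of points of $\adfPENT(5, r)$. Since no block of the GDD can form an opposite line pair, the output is a $\adfPENT(5, R')$ with no opposite line pairs beyond those inside the groups and
\[R' = Nr + \frac{3(N-1)}{2}.\]
Matching $R'$ to each of the three target formulas fixes $N$: namely $N = 10t+1$ for part~(i), $N = 10t+5$ for part~(ii), and $N = 2t+1$ for part~(iii). Hence the theorem reduces to producing $5$-GDDs of type $v^N$ for these choices of $(v, N)$.

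For part~(iii), the hypothesis $r \equiv 1 \adfmod{5}$ and $r \ge 21$ forces $v = 10m$ with $m = (2r+3)/5$ odd and $m \ge 9$, so in particular $m \notin \{2, 3, 6, 10\}$. When $t = 2$, so that $N = 5$, a transversal design TD$(5, v)$ (which exists since $v \ge 90$ and $v \notin \{6, 10\}$) is itself the required $5$-GDD. For larger odd $N$ I would first construct a $5$-GDD of type $10^N$ from the new $10^{23}$ of Lemma~\ref{lem:5-GDDs-direct}, from Lemma~\ref{lem:4-RGDD-to-5-GDD} applied to suitable $4$-RGDDs, and from standard TD-based recursions, and then inflate by $m$ via Lemma~\ref{lem:5-GDD-g-to-gh}.

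For parts~(i) and~(ii), $r \equiv 0 \adfmod{5}$ gives $v \equiv 6 \adfmod{10}$, so $v = 2w$ with $w = 2r + 3 \ge 43$ odd and $w \notin \{2, 3, 6, 10\}$. The base cases $t = 0$ of~(i) and~(ii) are respectively trivial and supplied by TD$(5, v)$. For $t \ge 1$ I would build the required $5$-GDDs of type $v^{10t+1}$ and $v^{10t+5}$ by inflating $5$-GDDs of type $2^{10t+1}$ and $2^{10t+5}$ by $w$ using Lemma~\ref{lem:5-GDD-g-to-gh}; the new types $2^{35}$ and $2^{71}$ of Lemma~\ref{lem:5-GDDs-direct} cover the initial non-trivial instances, while Wilson-style recursions take care of larger $N$.

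The hardest point will be the stubborn exception in part~(ii): at $t = 1$ one needs a $5$-GDD of type $v^{15}$, and $2^{15}$ is not known to exist. When $r \equiv 0 \adfmod{3}$, $v$ is divisible by $6$, so I would instead factor $v = 6 \cdot (2r+3)/3$ and inflate a $5$-GDD of type $6^{15}$, which is available from standard constructions. When $r \not\equiv 0 \adfmod{3}$, no such short factorization works, and the type-$v^{15}$ GDD has to be assembled from larger ingredients via Wilson's Fundamental Construction; that recipe only fires once $r$ is sufficiently large, precisely accounting for the stated possible exception at $r \le 1220$, $t = 1$.
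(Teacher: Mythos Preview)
Your high-level reduction is exactly the paper's: apply Theorem~\ref{thm:GDD-basic} with $N$ copies of the given $\adfPENT(5,r)$ on a $5$-GDD of type $v^N$ ($v=4r+6$), with $N=10t+1$, $10t+5$, $2t+1$ in parts (i), (ii), (iii) respectively. So the architecture is correct.

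Where you diverge is in how the $5$-GDDs of type $v^N$ are obtained. The paper does not build them by inflating $2^N$ or $10^N$; it simply invokes \cite[Theorem~2.25]{WeiGe2014}, which directly asserts existence of $5$-GDDs of type $v^u$ for the relevant congruence classes of $v$ and $u$, with explicitly listed possible exceptions. In particular the bound $r\le 1220$ (equivalently $v\le 4886$) together with $\gcd(v/2,30)=1$ and $u=15$ in part~(ii) is read straight off that theorem's exception list, not derived. The only place an inflation is actually needed is the single case $u=23$ in part~(iii), where \cite{WeiGe2014} has a gap and the paper fills it using the new $10^{23}$ of Lemma~\ref{lem:5-GDDs-direct} via Lemma~\ref{lem:5-GDD-g-to-gh}.

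Your inflation route is not wrong in principle, but as written it is a sketch with real gaps. You never establish that $2^{10t+1}$, $2^{10t+5}$, or $10^{2t+1}$ exist for all required $t$; ``Wilson-style recursions'' is not a proof. The remark that $2^{35}$ and $2^{71}$ ``cover the initial non-trivial instances'' is muddled: $35$ is not of the form $10t+1$, and for part~(i) the first nontrivial case is $N=11$, long before $71$. Likewise the claim that a $5$-GDD of type $6^{15}$ is ``available from standard constructions'' and that the $r>1220$ threshold arises from a Wilson recursion are both asserted, not shown. The clean fix is to do what the paper does: cite \cite[Theorem~2.25]{WeiGe2014} for $v^N$ directly, and reserve the inflation trick (with the new $10^{23}$) for the lone uncovered case $N=23$ in~(iii).
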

\begin{proof}
Let $v = 4r+6$ and note that $v \ge 86$.
In each case we use Theorem~\ref{thm:GDD-basic} together with an appropriate 5-GDD of type $v^u$ for the existence of which
we refer to \cite[Theorem 2.25]{WeiGe2014} unless otherwise stated.

(i) Since $v \equiv 6 \adfmod{20}$ there exists a 5-GDD of type $v^{10t + 1}$ for $t \ge 0$.

(ii) Since $v \equiv 6 \adfmod{20}$ there exists a 5-GDD of type $v^{10t + 5}$ for $t \ge 0$
except possibly when $v \le 4886$, $\gcd(v/2, 30) = 1$ and $t = 1$,
i.e.\ when $r \le 1220$, $r \not\equiv 0 \adfmod{3}$ and $t = 1$.

(iii) Since $v \equiv 10 \adfmod{20}$ there exists a 5-GDD of type $v^{2t + 1}$ for $t \ge 2$.
This follows from \cite[Theorem 2.25]{WeiGe2014} when $t \neq 11$; 
otherwise the 5-GDD of type $v^{23}$ is constructed by Lemma~\ref{lem:5-GDD-g-to-gh} 
using the 5-GDD of type $10^{23}$ from Lemma~\ref{lem:5-GDDs-direct}.
\end{proof}

\begin{theorem}
\label{thm:PENT-5-0OLP-constructed}
There exist pentagonal geometries $\adfPENT(5,r)$ with no opposite line pairs for the following $r$,
where in each case $t = 0$, $1$, \dots:
$$215 t + 20,~~~~~ 265 t + 25,~~~~~ 315 t + 30,~~~~~ 365 t + 35,~~~~~ 415 t + 40.$$
There exist pentagonal geometries $\adfPENT(5,r)$ with no opposite line pairs for the following $r$:
\begin{center}
\begin{tabular}{ll}
$215 t + 106,~~~ t \ge 0,~ t \neq 1$, & $265 t + 131,~~~  t \ge 0,~ t \neq 1$, \\
$315 t + 156,~~~ t \ge 0           $, & $365 t + 181,~~~  t \ge 0,~ t \neq 1$. \\
$415 t + 206,~~~ t \ge 0,~ t \neq 1$. &
\end{tabular}
\end{center}
\end{theorem}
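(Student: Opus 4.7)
The plan is to reduce the statement to a direct application of Theorem~\ref{thm:PENT-5-constructed} parts~(i) and~(ii), using Lemma~\ref{lem:PENT-5-r-direct} as the source of base pentagonal geometries. First I would note that all five values $r \in \{20, 25, 30, 35, 40\}$ satisfy $r \equiv 0 \adfmod{5}$ and $r \ge 20$, so Theorem~\ref{thm:PENT-5-constructed} applies, while Lemma~\ref{lem:PENT-5-r-direct} supplies a $\adfPENT(5, r)$ with no opposite line pairs for each such $r$. A quick arithmetic check confirms that $10 r + 15$ at $r = 20, 25, 30, 35, 40$ produces $215$, $265$, $315$, $365$, $415$, matching the stated moduli, and that $5 r + 6$ produces $106$, $131$, $156$, $181$, $206$, matching the stated constant terms of the second batch.

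Next I would verify that the no-opposite-line-pair property is inherited. The constructions underlying Theorem~\ref{thm:PENT-5-constructed} are all instances of Theorem~\ref{thm:GDD-basic} in which every group of a $5$-GDD of type $v^u$ (with $v = 4 r + 6$) is overlaid by a copy of the same base $\adfPENT(5, r)$. Since Theorem~\ref{thm:GDD-basic} delivers a geometry with exactly $P = \sum u_i p_i$ opposite line pairs, and here every $p_i = 0$, all the resulting pentagonal geometries are opposite-line-pair free.

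With this setup in place, applying Theorem~\ref{thm:PENT-5-constructed}(i) to each of the five base values gives the sequences $215 t + 20$, $265 t + 25$, $315 t + 30$, $365 t + 35$ and $415 t + 40$ for all $t \ge 0$. Applying part~(ii) gives sequences $(10 r + 15) t + 5 r + 6$ except possibly when $r \le 1220$, $r \not\equiv 0 \adfmod{3}$ and $t = 1$. For $r = 30$ the congruence condition fails since $30 \equiv 0 \adfmod{3}$, and no value is omitted, yielding $315 t + 156$ for every $t \ge 0$; for the remaining four values $r \in \{20, 25, 35, 40\}$, all coprime to $3$, the exception forces $t = 1$ to be excluded, matching the stated restrictions precisely.

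The whole argument is essentially bookkeeping, and there is no genuinely hard step: the only point that requires care is to confirm that the ``$r \not\equiv 0 \adfmod{3}$'' caveat in Theorem~\ref{thm:PENT-5-constructed}(ii) lines up with the stated $t \ne 1$ exceptions exactly as above, and that the no-opposite-line-pair property really does transfer verbatim from the base geometries through Theorem~\ref{thm:GDD-basic}.
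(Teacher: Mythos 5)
Your proposal is correct and is essentially the paper's own argument: the paper's proof is the one-line remark that the theorem follows from Lemma~\ref{lem:PENT-5-r-direct} and Theorem~\ref{thm:PENT-5-constructed}, and you have simply made explicit the arithmetic ($10r+15$ and $5r+6$ for $r\in\{20,25,30,35,40\}$), the matching of the $r\not\equiv 0\adfmod{3}$ caveat with the $t\neq 1$ exclusions, and the inheritance of opposite-line-pair freeness via $P=\sum u_i p_i=0$ in Theorem~\ref{thm:GDD-basic}. No further comment is needed.
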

\begin{proof}
This follows from Lemma~\ref{lem:PENT-5-r-direct} and Theorem~\ref{thm:PENT-5-constructed}.
\end{proof}
Further iterations of the constructions described in
Theorem~\ref{thm:PENT-5-0OLP-constructed}
fail to yield pentagonal geometries with new parameters.
For example, a $\adfPENT(5, 215t + 20)$ together with a 5-GDD of type $(860t + 86)^{10u + 1}$
gives a $\adfPENT(5, 215 (10tu + t + u) + 20)$.

Now observe that the number of points in a $\adfPENT(5, r)$
quite often has the form $gq$, where $g$ is even and $q$ is a prime power that is not too small.
Then there exists a $(u + 1)$-GDD of type $q^{u + 1}$
(i.e.\ a transversal design TD$(u+1, q)$) for $5 \le u \le q$, \cite{AbelColbournDinitz2007},
and further pentagonal geometries with block size 5 are constructible.
The next lemma is an application of Wilson's Fundamental Construction, \cite[Theorem IV.2.5]{GreigMullen2007}.
\begin{lemma}
\label{lem:TD-construction-g^m}
Let $g$, $u$ and $q$ be positive integers such that $u \le q$.
Suppose $D$ is a set of non-negative integers such that there exists a $5$-$\mathrm{GDD}$ of type $g^u d$ for each $d \in D$.
Suppose also that there exists a $(u + 1)$-$\mathrm{GDD}$ of type $q^{u + 1}$.
Then there exists a $5$-$\mathrm{GDD}$ of type $(gq)^u m^1$ for each $m$ that is a sum of $q$ elements of $D$.
\end{lemma}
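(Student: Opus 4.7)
The plan is to apply Wilson's Fundamental Construction (\cite[Theorem IV.2.5]{GreigMullen2007}) using the given $(u+1)$-GDD of type $q^{u+1}$ as the master design and using the $5$-GDDs of type $g^u d$ as the filling ingredients.

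First I would set up the master design. Let $(V^*,\mathcal{G}^*,\mathcal{B}^*)$ be the $(u+1)$-GDD of type $q^{u+1}$, with groups $G_1,\dots,G_u,G_{u+1}$, each of size $q$. Since $m$ is a sum of $q$ elements of $D$, I can write $m = d_1+d_2+\cdots+d_q$ with every $d_i \in D$. Enumerate the points of the distinguished group $G_{u+1}$ as $y_1,y_2,\dots,y_q$ and assign the weight $d_i$ to $y_i$. Every point of $G_1\cup\cdots\cup G_u$ is given the uniform weight $g$.

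Next I would inflate: replace each point $x \in V^*$ by a set $S_x$ of $w(x)$ new points, pairwise disjoint. Define new groups $G_i' = \bigcup_{x \in G_i} S_x$, so that $|G_i'| = gq$ for $1 \le i \le u$ and $|G_{u+1}'| = d_1+\cdots+d_q = m$. For each master block $B = \{x_1,\dots,x_u,y_{i(B)}\} \in \mathcal{B}^*$, the inflated point sets $S_{x_1},\dots,S_{x_u},S_{y_{i(B)}}$ have sizes $g,g,\dots,g,d_{i(B)}$ respectively; place on their union a copy of a $5$-GDD of type $g^u d_{i(B)}$, which exists by hypothesis because $d_{i(B)} \in D$, taking these $u+1$ inflated sets as its groups.

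Finally I would verify the design properties. Any two points from distinct new groups $G_i',G_j'$ arise as $(p,p')$ with $p \in S_x$, $p' \in S_{x'}$ for some $x \in G_i$, $x' \in G_j$, $i \ne j$. Since $\{x,x'\}$ lies in exactly one master block $B$, the pair $\{p,p'\}$ is covered in exactly one filling $5$-GDD, namely the one placed on $B$, and there exactly once because $S_x,S_{x'}$ are distinct groups of that filling. Two points in the same $S_x$, or in $S_x \cup S_{x'}$ with $x,x'$ in the same master group, are never covered by a filling block (the former belong to the same group of every filling through $x$; the latter never co-occur in a master block). Hence $(\bigcup_i G_i',\{G_i'\},\bigcup_B \mathcal{B}_B)$ is a $5$-GDD of the required type $(gq)^u m^1$.

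The construction is essentially textbook, so there is no real obstacle; the only thing one must check is that the weighting is feasible, and the hypothesis that $m$ is a sum of $q$ elements of $D$ (with repetition allowed) matches exactly the $q$ points available in the distinguished group $G_{u+1}$.
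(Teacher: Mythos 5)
Your proposal is correct and follows essentially the same route as the paper: apply Wilson's Fundamental Construction to the $(u+1)$-GDD of type $q^{u+1}$, weighting the points of one distinguished group by $d_1,\dots,d_q$ and all other points by $g$, then filling each inflated block with a $5$-GDD of type $g^u d_i^1$. The paper's proof is a four-line sketch of exactly this; your version merely supplies the routine pair-coverage verification.
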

\begin{proof}
Take the $(u + 1)$-$\mathrm{GDD}$ of type $q^{u+1}$,
select a group and inflate its points by factors $d_1$, $d_2$, \dots, $d_q \in D$.
Inflate all other points by a factor of $g$.
Overlay the inflated blocks with 5-GDDs of types $g^u d_i^1$, $i = 1$, 2, \dots, $q$, as appropriate.
The result is a 5-GDD of type $(g q)^u (d_1 + d_2 + \dots + d_q)^1$.
\end{proof}

Our next two lemmas are special cases of Lemma~\ref{lem:TD-construction-g^m} and exploit the known existence of certain 5-GDD types.
\begin{lemma}
\label{lem:5-GDD-g^40-m^1}
Let $g$ and $q$ be positive integers such that $g$ is even, $g \notin \{6, 12\}$,
$q \ge 40$ and there exist $39$ mutually orthogonal Latin squares of side $q$.
Then there exist
$5$-$\mathrm{GDD}$s of types $g^{40} (gd)^1$ for $d \in \{1, 3, 13\}$ as well as
$5$-$\mathrm{GDD}$s of types $(gq)^{40} m^1$ for $m \in M_{40}(g,q)$, where
\begin{equation}
\begin{split}
\label{eqn:M40}
M_{40}(g,q) =~ & \{gj: j \in \{q, q + 2, \dots, 13q - 40\}\}  \\
               & \cup \{g(13q - j): j \in \{36,34,32,30,24,22,20,12,10,0\}\}.
\end{split}
\end{equation}
\end{lemma}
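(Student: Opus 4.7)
The plan has two stages: first, establish the seed $5$-GDDs of types $g^{40}(gd)^1$ for $d \in \{1, 3, 13\}$; then apply Lemma~\ref{lem:TD-construction-g^m} with $u = 40$, ingredient set $D = \{g, 3g, 13g\}$, and a TD$(41, q)$ (a $41$-GDD of type $q^{41}$, available from the hypothesized $39$ MOLS of side $q$) to combine $q$ seeds into a $5$-GDD of type $(gq)^{40} m^1$, where $m$ is any sum of $q$ elements of $D$, i.e.\ $m = g(a + 3b + 13c)$ with $a + b + c = q$, $a, b, c \ge 0$.

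For the seeds, when $d = 3$ I would inflate the $5$-GDDs of type $2^{40}6^1$ and $4^{40}12^1$ from Lemma~\ref{lem:5-GDDs-direct} by factors of $g/2$ and $g/4$ respectively, via Lemma~\ref{lem:5-GDD-g-to-gh}, and check that for every even $g \notin \{6, 12\}$ at least one of these factors avoids the forbidden set $\{2, 3, 6, 10\}$---with the boundary case $g = 4$ covered directly by $4^{40}12^1$. When $d = 13$, apply Lemma~\ref{lem:4-RGDD-to-5-GDD} to a $4$-RGDD of type $g^{40}$, whose existence for even $g$ is standard since $40 \equiv 1 \adfmod{3}$ and the remaining arithmetic conditions hold automatically. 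When $d = 1$, a $5$-GDD of type $g^{41}$ is supplied by the known existence spectrum for $5$-GDDs of type $g^u$ (e.g.\ \cite[Theorem 2.25]{WeiGe2014}), the necessary conditions being automatic at $u = 41$ for any $g$.

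The combinatorial core is identifying $M_{40}(g, q)$ with the set of achievable sums. Writing $j = a + 3b + 13c = q + 2b + 12c$ with $b, c \ge 0$ and $b + c \le q$, and parameterizing $j = 13q - 2t$, $c = q - k$, the feasibility constraints reduce to the existence of an integer $k \in [\lceil t/6 \rceil, \min(q, \lfloor t/5 \rfloor)]$. For $t \in [20, 6q]$ this interval is always non-empty, accounting for the full main arithmetic progression $\{gq, g(q+2), \dots, g(13q - 40)\}$ in~(\ref{eqn:M40}). For $t \in [0, 19]$ a direct enumeration shows non-emptiness precisely for $t \in \{0, 5, 6, 10, 11, 12, 15, 16, 17, 18\}$, yielding exactly the ten tail values. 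The hardest part is this last gap analysis: ruling out the sporadic non-achievable $t \in \{1, 2, 3, 4, 7, 8, 9, 13, 14, 19\}$ while simultaneously confirming the ten tail cases is what dictates the piecewise shape of $M_{40}(g, q)$.
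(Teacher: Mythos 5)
Your proposal follows the same two-stage architecture as the paper: build the three seed $5$-GDDs of types $g^{40}(gd)^1$, $d\in\{1,3,13\}$, then feed $D=\{g,3g,13g\}$ into Lemma~\ref{lem:TD-construction-g^m} with $u=40$ and the TD$(41,q)$ coming from the $39$ MOLS. Your arithmetic identifying $M_{40}(g,q)$ with the achievable sums is correct and is in fact more explicit than the paper, which simply asserts the set: writing $13q-j=2(6k-b)$ with $0\le b\le k\le q$ does give the union of $[5k,6k]$, which is contiguous from $t=20$ onward and produces exactly the ten sporadic tail values $t\in\{0,5,6,10,11,12,15,16,17,18\}$ for $k\le 3$. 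The one place you diverge is the seeds for $d=1$ and $d=13$: you invoke a $5$-GDD of type $g^{41}$ and a $4$-RGDD of type $g^{40}$ directly for arbitrary even $g$, whereas the paper only ever uses $g\in\{2,4\}$ at this stage ($2^{41}$, $4^{41}$, $2^{40}26^1$ and $4^{40}52^1$ via $4$-RGDDs of types $2^{40}$ and $4^{40}$) and then inflates by $h=g/2$ or $h=g/4$ through Lemma~\ref{lem:5-GDD-g-to-gh}, exactly as you do for $d=3$; the hypothesis $g\notin\{6,12\}$ guarantees one of these factors avoids $\{2,3,6,10\}$. The paper's route is safer, since the existence spectra for uniform $5$-GDDs and especially for $4$-RGDDs of type $g^u$ still carry possible exceptions for general $g$, so your claim that a $4$-RGDD of type $g^{40}$ exists for every even $g$ ``automatically'' needs a citation or should be replaced by the inflation argument you already use for $d=3$. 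This is a repairable weak spot rather than a fatal gap.
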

\begin{proof}
There exist 5-GDDs of the following types:
$2^{41}$ and $4^{41}$ from \cite[Theorem 2.25]{WeiGe2014};
$2^{40} 6^1$ and $4^{40} 12^1$ from Lemma~\ref{lem:5-GDDs-direct};
$2^{40} 26^1$ and $4^{40} 52^1$ obtained by Lemma~\ref{lem:4-RGDD-to-5-GDD} from
4-RGDDs of types $2^{40}$ and $4^{40}$, \cite{GeLing2005} or \cite[Theorem IV.5.44]{GeMiao2007}.
Except for $g \in \{6, 12\}$ we use Lemma~\ref{lem:5-GDD-g-to-gh} to construct
5-GDDs of types $g^{40} (gd)^1$ for $d \in \{1, 3, 13\}$ from the corresponding
5-GDDs of types $2^{40} (2d)^1$ and $4^{40} (4d)^1$.

For the 5-GDDs of type $(gq)^{40} m^1$, $m \in M_{40}(g,q)$, we use Lemma~\ref{lem:TD-construction-g^m} with $u = 40$ and $D = \{g, 3g, 13g\}$.
The existence of 39 MOLS of side $q$ guarantees the existence of a 41-GDD of type $q^{41}$.
The possible sums of $q$ elements taken from $D$ are given by (\ref{eqn:M40}).
\end{proof}
\begin{lemma}
\label{lem:5-GDD-g^10-m^1}
Let $g$ and $q$ be positive integers such that $g$ is a multiple of $10$, $g \notin \{30, 60\}$,
$q \ge 10$ and there exist $9$ mutually orthogonal Latin squares of side $q$.
Then there exist
$5$-$\mathrm{GDD}$s of types $g^{10} (gd/5)^1$ for $d \in \{5, 9, 15\}$ as well as
$5$-$\mathrm{GDD}$s of types $(gq)^{10} m^1$ for $m \in M_{10}(g,q)$, where
\begin{equation}
\begin{split}
\label{eqn:M10}
M_{10}(g,q) =~ & \{gj/5: j \in \{5q, 5q + 2, \dots, 15q\}  \\
     & ~~~~~ \setminus \{5 q + 2, 5 q + 6, 15 q - 14, 15 q - 8, 15 q - 4, 15 q - 2\}\}.
\end{split}
\end{equation}
\end{lemma}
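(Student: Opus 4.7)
The plan is to mirror the structure of Lemma~\ref{lem:5-GDD-g^40-m^1} with $u = 10$ in place of $u = 40$ and with the ingredient set $D = \{g,\, 9g/5,\, 3g\}$ in place of $\{g,\, 3g,\, 13g\}$. First I would assemble six base 5-GDDs covering the two smallest admissible values $g \in \{10, 20\}$ and the three values $d \in \{5, 9, 15\}$. The appendix sizes corresponding to $d = 5$, giving the uniform types $10^{11}$ and $20^{11}$, are supplied by general existence results for 5-GDDs of type $v^{11}$ from \cite[Theorem 2.25]{WeiGe2014}. The appendix sizes corresponding to $d = 9$ are exactly the types $10^{10}18^1$ and $20^{10}36^1$ constructed directly in Lemma~\ref{lem:5-GDDs-direct}. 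The appendix sizes corresponding to $d = 15$ yield $10^{10}30^1$ and $20^{10}60^1$, which I would obtain from 4-RGDDs of types $10^{10}$ and $20^{10}$ (standard in the literature) by applying Lemma~\ref{lem:4-RGDD-to-5-GDD}.

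Second, to realize $g^{10}(gd/5)^1$ for an arbitrary $g$ that is a multiple of $10$ with $g \notin \{30, 60\}$, I would apply Lemma~\ref{lem:5-GDD-g-to-gh} to inflate the $g = 10$ base designs by $h = g/10$, which is valid whenever $h \notin \{2, 3, 6, 10\}$, i.e.\ $g \notin \{20, 30, 60, 100\}$. The remaining admissible cases are $g = 20$ (base design already in hand) and $g = 100$ (inflate the $g = 20$ base by $h = 5$). The cases $g \in \{30, 60\}$ correspond precisely to the excluded values in the hypothesis.

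Third, for the 5-GDDs of type $(gq)^{10}m^1$, I would invoke Lemma~\ref{lem:TD-construction-g^m} with $u = 10$, $D = \{g,\, 9g/5,\, 3g\}$, and the 11-GDD of type $q^{11}$ furnished by the hypothesised 9 MOLS of side $q$. Writing any sum of $q$ elements of $D$ as $(g/5)(5x_1 + 9x_2 + 15x_3)$ with $x_1 + x_2 + x_3 = q$ and eliminating $x_1$, each attainable $m$ takes the form $gj/5$ with
\[
j \;=\; 5q + 4x_2 + 10x_3, \qquad x_2,x_3 \ge 0, \qquad x_2 + x_3 \le q.
\]

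The main obstacle, and the bulk of the verification, is to check that the set of such $j$ coincides with the set displayed in (\ref{eqn:M10}). All attainable $j$ are automatically even and lie in $[5q, 15q]$. At the lower end, the only even values $5q + z$ for which $z = 4x_2 + 10x_3$ has no solution in non-negative integers $x_2,x_3$ at all are $z \in \{2, 6\}$, giving the exclusions $5q + 2$ and $5q + 6$. At the upper end, substituting $x_3 = q - k$ with $k \ge 0$ and analysing the resulting parity constraint $2x_2 = 5k - c$ (for $c \in \{2,4,8,14\}$) shows that the requirement $x_2 + x_3 \le q$ is violated whenever $c$ is even and small, so that $j \in \{15q - 14,\, 15q - 8,\, 15q - 4,\, 15q - 2\}$ are unattainable; an explicit choice of $(x_2, x_3)$ realises every other even $j$ in the range. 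Combining the two end analyses yields $M_{10}(g,q)$ exactly.
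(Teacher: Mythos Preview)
Your proposal is correct and follows essentially the same approach as the paper's proof: assemble the six base $5$-GDDs of types $10^{10}(2d)^1$ and $20^{10}(4d)^1$ for $d\in\{5,9,15\}$ from the same sources, inflate via Lemma~\ref{lem:5-GDD-g-to-gh} to reach general $g$, and then apply Lemma~\ref{lem:TD-construction-g^m} with $u=10$ and $D=\{g,9g/5,3g\}$. Two small slips to clean up: it is $j-5q$ (not $j$ itself) that is automatically even, and in your upper-end analysis the substitution $x_3=q-k$ yields $4x_2=10k-c$, not $2x_2=5k-c$; neither affects the conclusion, and your derivation of the excluded set $\{5q+2,\,5q+6,\,15q-14,\,15q-8,\,15q-4,\,15q-2\}$ is otherwise sound (and more detailed than the paper, which simply asserts~(\ref{eqn:M10})).
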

\begin{proof}
The proof is similar to that of Lemma~\ref{lem:5-GDD-g^40-m^1}.
There exist 5-GDDs of the following types:
$10^{11}$ and $20^{11}$ from \cite[Theorem 2.25]{WeiGe2014};
$10^{10} 18^1$ and $20^{10} 36^1$ from Lemma~\ref{lem:5-GDDs-direct};
$10^{10} 30^1$ and $20^{10} 60^1$ by Lemma~\ref{lem:4-RGDD-to-5-GDD} from
4-RGDDs of types $10^{10}$ and $20^{10}$, \cite[Theorem 4.21]{WeiGe2014}.
Except for $g \in \{30, 60\}$ we use Lemma~\ref{lem:5-GDD-g-to-gh} to construct
5-GDDs of types $g^{10} (gd/5)^1$ for $d \in \{5, 9, 15\}$ from the corresponding
5-GDDs of types $10^{10} (2d)^1$ and $20^{10} (4d)^1$.

For the 5-GDDs of type $(gq)^{10} m^1$, $m \in M_{10}(g,q)$, we use Lemma~\ref{lem:TD-construction-g^m} with
$u = 10$ and $D = \{gj/5: j \in \{5, 9, 15\}\}$.
The possible sums of $q$ elements taken from $D$ are given by (\ref{eqn:M10}).
\end{proof}

We give some examples to show how Lemmas~\ref{lem:5-GDD-g^40-m^1} and \ref{lem:5-GDD-g^10-m^1} can yield new pentagonal geometries.
\begin{construction}
\label{con:PENT-5-TD-construction-40}
{\rm
Start with a pentagonal geometry $\adfPENT(5, r)$, where $4r + 6 = g q$ such that $g$ is even, $g \notin \{6, 12\}$ and
there exist 39 MOLS of side $q$. Define $M_{40}(g,q)$ as in (\ref{eqn:M40}).
Use Lemma~\ref{lem:5-GDD-g^40-m^1} to construct 5-GDDs of types $(gq)^{40} m^1$, $m \in M_{40}(g,q)$.

Using Theorem~\ref{thm:GDD-basic} with 40 copies of the $\adfPENT(5, r)$,
for each $m \in M_{40}(g,q)$ such that there exists a $\adfPENT(5, (m - 6)/4)$
we can construct a $\adfPENT(5, 40r +(m-6)/4 + 60)$.
}
\end{construction}
\begin{construction}
\label{con:PENT-5-TD-construction-10}
{\rm
Start with a pentagonal geometry $\adfPENT(5, r)$, where $4r + 6 = g q$ such that $g$ is divisible by 10, $g \notin \{30, 60\}$ and
there exist 9 MOLS of side $q$. Define $M_{10}(g,q)$ as in (\ref{eqn:M10}).
Use Lemma~\ref{lem:5-GDD-g^10-m^1} to construct 5-GDDs of types $(gq)^{10} m^1$, $m \in M_{10}(g,q)$.

Using Theorem~\ref{thm:GDD-basic} with 10 copies of the $\adfPENT(5, r)$,
for each $m \in M_{10}(g,q)$ such that there exists a $\adfPENT(5, (m - 6)/4)$
we can construct a $\adfPENT(5, 10r +(m-6)/4 + 15)$.
}
\end{construction}

%
%
\begin{table}[h]
{\scriptsize

\begin{tabular}{@{}r@{~}r@{~}r@{~}r|l@{}}
$g$ & $u$ & $q$ & $r_0$ & $\adfPENT(5,r)$ \\
\hline
2 & 40 & 43 & 20 & 880, 1066 \\
2 & 40 & 53 & 25 & 1085, 1241, 1295 \\
2 & 40 & 73 & 35 & 1495, 1591, 1915 \\
2 & 40 & 83 & 40 & 1700, 1766, 2060 \\
2 & 64 & 73 & 35 & 2371, 2571, 2681, 2791, 2891, 3001, 3101 \\
2 & 64 & 83 & 40 & 2696, 2946, 3056, 3106, 3316, 3476, 3526 \\
10 & 10 & 43 & 106 & 1181, 1206, 1231, 1256, 1281 \\
10 & 10 & 53 & 131 & 1456, 1481, 1506, 1531 \\
10 & 10 & 73 & 181 & 2006, 2031, 2296, 2361 \\
10 & 10 & 83 & 206 & 2281, 2546, 2611 \\
10 & 16 & 43 & 106 & 1826, 1876, 1926, 2256 \\
10 & 16 & 53 & 131 & 2251, 2301, 2591, 2781 \\
10 & 16 & 73 & 181 & 3101, 3391, 3581, 3671, 3831 \\
10 & 16 & 83 & 206 & 3526, 3856, 4106, 4246, 4286, 4356 \\
10 & 22 & 43 & 106 & 2471, 2546, 3026, 3116 \\
10 & 22 & 53 & 131 & 3046, 3121, 3451, 3826, 3841 \\
10 & 22 & 73 & 181 & 4196, 4676, 4766, 5051, 5081, 5246, 5291 \\
10 & 22 & 83 & 206 & 4771, 5101, 5476, 5491, 5746, 5806, 5821, 6016 \\
10 & 28 & 43 & 106 & 3116, 3216, 3796, 3936, 3976 \\
10 & 28 & 53 & 131 & 3841, 4181, 4461, 4621, 4901 \\
10 & 28 & 73 & 181 & 5291, 5771, 6211, 6291, 6351, 6391, 6591, 6751 \\
10 & 28 & 83 & 206 & 6016, 6596, 6736, 6776, 6876, 7016, 7316, 7576, 7636, 7676 \\
10 & 40 & 43 & 106 & 4406, 5336, 5366, 5576, 5696 \\
10 & 40 & 53 & 131 & 5431, 6211, 6481, 6541, 6751, 6781, 7021 \\
10 & 40 & 73 & 181 & 7481, 7961 \\
42 & 10 & 43 & 450 & 4965, 4986, 5070, 5175, 5280, 5301, 5385, 5490, 5616, 5721, 5805 \\
42 & 10 & 53 & 555 & 6120, 6225, 6330, 6351, 6435, 6540, 6666, 6771, 6855, 6981, 7170 \\
42 & 14 & 43 & 450 & 6771, 6876, 6981, 7086, 7191, 7296, 7611, 7926 \\
50 & 10 & 43 & 536 & 5911, 6036, 6161, 6286, 6411, 6986 \\
50 & 10 & 53 & 661 & 7286, 7411, 7536, 7661 \\
54 & 10 & 49 & 660 & 7275, 7896 \\
70 & 10 & 27 & 471 & 5196, 5476, 5511, 5651, 5791, 5826, 5931, 5966, 6001, 6141 \\
90 & 6 & 7 & 156 & 1101 \\
90 & 6 & 43 & 966 & 6771, 6906, 6996, 7011, 7086, 7221, 7311, 7416 \\
90 & 6 & 49 & 1101 & 7716, 7806, 7821, 7896 \\
98 & 10 & 27 & 660 & 7275, 7716 \\
126 & 10 & 11 & 345 & 3810, 3936, 4125, 4251, 4440 \\
146 & 10 & 11 & 400 & 4415, 4780, 4926, 5145 \\
166 & 10 & 11 & 455 & 5020, 5435, 5601, 5850 \\
210 & 6 & 9 & 471 & 3306, 3586, 3621 \\
210 & 8 & 9 & 471 & 4251, 4531, 4881 \\
270 & 6 & 7 & 471 & 3306, 3621 \\
450 & 6 & 7 & 786 & 5511 \\
630 & 6 & 7 & 1101 & 7716, 7821
\end{tabular}

}
\vskip 2mm
\caption{Construction~\ref{con:PENT-5-TD-construction-g-u-q}}
\label{tab:PENT-5-TD-construction-g-u-q}

\end{table}

To exploit these two constructions, first
let $R = \{20, 25, 35, 40\}$ and recall from Lemma~\ref{lem:PENT-5-r-direct} that
there exists a $\adfPENT(5,r)$ with no opposite line pair for
$r \in R \cup \{30\}$.
Let $r_0 \in R$ and $q = 2r_0 + 3$, half the number of points in the $\adfPENT(5, r_0)$.
Observe that $q$ is prime, $q \ge 40$ and that we cannot include 30 in $R$ because the corresponding $q$ is unsuitable.
Invoking Construction~\ref{con:PENT-5-TD-construction-40} with $g = 2$ gives
\begin{equation*}
\begin{split}
M_{40}(2,q) =~ & \{2q, 2q + 4, \dots, 26q - 80\}  \\
               & \cup \{26q - 2j: j \in \{36,34,32,30,24,22,20,12,10,0\}\},
\end{split}
\end{equation*}
and after eliminating those $m \in M_{40}(2,q)$ where
either a $\adfPENT(5, (m - 6)/4)$ does not exist, or a $\adfPENT(5, 40r_0 + (m - 6)/4 + 60)$ exists by
Theorem~\ref{thm:PENT-5-0OLP-constructed},
we obtain the following pentagonal geometries $\adfPENT(5, r)$, all with no opposite line pairs:
\begin{equation}
\label{eqn:PENT-5-885-et-al}
\begin{split}
r_0 &= 20: r = 885, 890, 895, 900, 991, 1016, 1041, 1066,\\
r_0 &= 25: r = 1090, 1100, 1166, 1216, 1241, 1266, 1295,\\
r_0 &= 35: r = 1500, 1566, 1591, 1616, 1666, 1695, 1750, 1805, 1910, 1915,\\
r_0 &= 40: r = 1766, 1791, 1816, 1841, 1895, 1950, 2005, 2060, 2110, 2131.
\end{split}
\end{equation}
For instance, $r = 2131$ arises from $r_0 = 40$, $q = 83$ and $s = 471$ corresponding to $4s + 6 = 1890 \in M_{40}(2,83)$.
Indeed, 1890 is expressible as the sum of $83$ elements from $\{2, 6, 26\}$: $1890 = 71 \cdot 26 + 5 \cdot 6 + 7 \cdot 2$, say.
The relevant 5-GDD type is $166^{40} 1890^1$.
Thus a $\adfPENT(5,2131)$ is constructed from existing pentagonal geometries with no opposite line pairs,
40 copies of $\adfPENT(5, 40)$ and one of $\adfPENT(5, 471)$.
The $\adfPENT(5, 471)$ is obtained by Theorem~\ref{thm:PENT-5-0OLP-constructed} from 15 copies of $\adfPENT(5, 30)$
and a 5-GDD of type $126^{15}$, which exists by \cite[Theorem 2.25]{WeiGe2014}.

Now let $R = \{106, 131, 181, 206\}$ and recall from Theorem~\ref{thm:PENT-5-0OLP-constructed} that
there exists a pentagonal geometry $\adfPENT(5,r)$ for each $r \in R \cup \{156\}$.
Let $r_0 \in R$ and $q = (4r_0 + 6)/10$.
Invoking Construction~\ref{con:PENT-5-TD-construction-10} with $g = 10$ gives
\begin{equation*}
\begin{split}
M_{10}(10,q) =~ & \{10q, 10q + 4, \dots, 30q\}  \\
                     & ~~~~~ \setminus \{10 q+4,10 q+12,30 q-28,30 q-16,30 q-8,30 q-4\},
\end{split}
\end{equation*}
and we obtain the following pentagonal geometries $\adfPENT(5, r)$, all with no opposite line pairs:
\begin{equation*}
\label{eqn:PENT-5-1206-et-al}
\begin{split}
r_0 &= 106: r = 1206, 1231, 1256, 1281, 1365,\\
r_0 &= 131: r = 1481, 1506, 1531, 1560, 1670,\\
r_0 &= 181: r = 2031, 2060, 2275, 2280, 2296,\\
r_0 &= 206: r = 2310, 2365, 2420, 2475, 2525, 2546, 2611, 2630.
\end{split}
\end{equation*}

Constructions~\ref{con:PENT-5-TD-construction-40} and \ref{con:PENT-5-TD-construction-10}
are of course constrained by the specific values of $u$, namely 40 and 10 respectively.
In the next construction $u$ is not so restricted.
However the current state of knowledge regarding the existence of non-uniform 5-GDDs
limits the range of $m$ for the constructing 5-GDDs of type $(gq)^u m^1$
by means of Lemma~\ref{lem:TD-construction-g^m}.
\begin{construction}
\label{con:PENT-5-TD-construction-g-u-q}
{\rm
Let $g$, $u$ and $q$ be positive integers such that
$u \ge 6$, $u$ is even, $gu \equiv 0 \adfmod{4}$, $g(u - 1) \equiv 0 \adfmod{3}$, $g^2 (u + 1)u \equiv 0 \adfmod{20}$,
$gq \equiv 6 \textrm{~or~} 10 \adfmod{20}$
and there exist $u - 1$ MOLS of side $q$.

Let $d = g(u - 1)/3$ and suppose that there exist 5-GDDs of types $g^{u + 1}$ and $g^u d^1$.
Then, by Lemma~\ref{lem:TD-construction-g^m}, there exists a 5-GDD of type $(gq)^u m^1$ for
each $m \in M$, where
$$M = \{j d + (q - j) g: j = 0, 1, \dots, q\}.$$

Let $r_0 = (gq - 6)/4$, let $S = \{(m - 6)/4: m \in M\}$, and
suppose there exists a $\adfPENT(5, r_0)$.
Then, by Theorem~\ref{thm:GDD-basic}, there exists a pentagonal geometry $\adfPENT(5, u r_0 + s + 3u/2)$
for each $s \in S$ where there exists a $\adfPENT(5, s)$.
}
\end{construction}

We give some examples of Construction~\ref{con:PENT-5-TD-construction-g-u-q} in Table~\ref{tab:PENT-5-TD-construction-g-u-q}
with $q$ restricted to prime powers.
For each $r$ in the column headed $\adfPENT(5,r)$, let $s = r - u r_0 - 3u/2$, $m = 4s + 6$ and $d = g(u - 1)/3$.
Then one can verify that $m = j d + (q - j)g$ for some $j \in \{0, 1, \dots, q\}$.
The existence of the 5-GDD of type $g^{u + 1}$ follows from \cite[Theorem 2.25]{WeiGe2014} or Lemma~\ref{lem:5-GDDs-direct},
and that of type $g^u d^1$ from \cite[Theorem 2.41]{WeiGe2014} and Lemma~\ref{lem:4-RGDD-to-5-GDD}.
For the $\adfPENT(5,r_0)$ and the $\adfPENT(5,s)$, see
Theorem~\ref{thm:PENT-5-0OLP-constructed} or earlier $\adfPENT(5,r)$ entries in Table~\ref{tab:PENT-5-TD-construction-g-u-q}.

It seems possible that Construction~\ref{con:PENT-5-TD-construction-g-u-q} combined with
Lemma~\ref{lem:PENT-5-r-direct}, Theorem~\ref{thm:PENT-5-constructed} and the available 5-GDDs of type $g^u m^1$
will generate $\adfPENT(5,r)$ geometries without opposite line pairs for
all except a small number of $r \equiv 0 \textrm{~or~} 1 \adfmod{5}$.
A proof would be desirable.




\appendix

\section{$\adfPENT(4,r)$ geometries for Lemma~\ref{lem:PENT-4-direct-extra}}
\label{app:PENT-4-direct-extra}

{\noindent\boldmath $\adfPENT(4, 29)$}~ With point set $Z_{92}$, the 667 lines are generated from

{\scriptsize
 $\{12, 45, 73, 80\}$, $\{20, 48, 59, 62\}$, $\{10, 33, 67, 86\}$, $\{30, 37, 39, 59\}$,\adfsplit
 $\{64, 44, 70, 61\}$, $\{90, 31, 16, 59\}$, $\{90, 79, 10, 44\}$, $\{67, 64, 25, 48\}$,\adfsplit
 $\{11, 87, 16, 82\}$, $\{24, 62, 34, 86\}$, $\{26, 53, 41, 67\}$, $\{4, 31, 63, 25\}$,\adfsplit
 $\{6, 31, 7, 44\}$, $\{19, 58, 26, 28\}$, $\{0, 7, 51, 52\}$, $\{0, 2, 8, 49\}$,\adfsplit
 $\{0, 23, 31, 48\}$, $\{0, 1, 56, 88\}$, $\{0, 13, 50, 70\}$, $\{0, 18, 63, 81\}$,\adfsplit
 $\{0, 22, 65, 78\}$, $\{0, 9, 34, 82\}$, $\{0, 25, 29, 77\}$, $\{1, 6, 17, 83\}$,\adfsplit
 $\{1, 2, 73, 90\}$, $\{1, 42, 79, 91\}$, $\{1, 34, 47, 51\}$, $\{1, 9, 19, 71\}$,\adfsplit
 $\{1, 25, 55, 61\}$
}

\noindent under the action of the mapping $x \mapsto x + 4 \adfmod{92}$.
The deficiency graph is connected and has girth 6.

{\noindent\boldmath $\adfPENT(4, 33)$}~ With point set $Z_{104}$, the 858 lines are generated from

{\scriptsize
 $\{33, 39, 93, 99\}$, $\{28, 62, 72, 101\}$, $\{10, 46, 55, 98\}$, $\{8, 13, 43, 67\}$,\adfsplit
 $\{36, 76, 81, 94\}$, $\{9, 16, 86, 99\}$, $\{20, 29, 49, 66\}$, $\{47, 58, 71, 72\}$,\adfsplit
 $\{42, 22, 37, 66\}$, $\{83, 15, 31, 20\}$, $\{76, 70, 27, 100\}$, $\{58, 56, 81, 44\}$,\adfsplit
 $\{58, 26, 22, 19\}$, $\{35, 96, 93, 94\}$, $\{5, 101, 69, 76\}$, $\{9, 23, 59, 21\}$,\adfsplit
 $\{59, 91, 71, 2\}$, $\{90, 16, 26, 8\}$, $\{67, 75, 0, 42\}$, $\{38, 57, 18, 49\}$,\adfsplit
 $\{44, 97, 35, 100\}$, $\{94, 68, 90, 32\}$, $\{96, 24, 58, 65\}$, $\{87, 83, 68, 5\}$,\adfsplit
 $\{60, 87, 64, 44\}$, $\{5, 21, 19, 2\}$, $\{0, 1, 26, 44\}$, $\{0, 3, 4, 98\}$,\adfsplit
 $\{1, 5, 50, 92\}$, $\{1, 11, 36, 42\}$, $\{0, 31, 50, 52\}$, $\{1, 2, 17, 53\}$,\adfsplit
 $\{1, 10, 31, 58\}$, $\{2, 28, 29, 51\}$, $\{2, 7, 14, 76\}$, $\{1, 34, 45, 84\}$,\adfsplit
 $\{2, 30, 31, 68\}$, $\{2, 45, 93, 103\}$, $\{2, 3, 37, 78\}$, $\{2, 36, 43, 85\}$,\adfsplit
 $\{2, 39, 83, 94\}$, $\{0, 11, 46, 68\}$, $\{0, 12, 20, 71\}$, $\{1, 12, 83, 99\}$,\adfsplit
 $\{4, 7, 54, 85\}$, $\{4, 14, 61, 95\}$, $\{1, 20, 22, 95\}$, $\{0, 13, 28, 56\}$,\adfsplit
 $\{1, 27, 47, 75\}$, $\{1, 6, 41, 59\}$, $\{3, 21, 30, 95\}$, $\{0, 6, 22, 27\}$,\adfsplit
 $\{0, 19, 38, 86\}$, $\{0, 15, 87, 91\}$, $\{0, 17, 49, 51\}$, $\{5, 22, 30, 102\}$,\adfsplit
 $\{0, 14, 78, 81\}$, $\{0, 30, 47, 85\}$, $\{1, 21, 39, 54\}$, $\{0, 21, 45, 57\}$,\adfsplit
 $\{0, 16, 53, 80\}$, $\{0, 9, 54, 95\}$, $\{1, 30, 55, 63\}$, $\{0, 63, 65, 89\}$,\adfsplit
 $\{1, 23, 29, 57\}$, $\{0, 7, 55, 69\}$
}

\noindent under the action of the mapping $x \mapsto x + 8 \adfmod{104}$.
The deficiency graph is connected and has girth 6.

{\noindent\boldmath $\adfPENT(4, 37)$}~ With point set $Z_{116}$, the 1073 lines are generated from

{\scriptsize
 $\{24, 70, 92, 107\}$, $\{13, 14, 67, 105\}$, $\{30, 48, 90, 105\}$, $\{12, 23, 53, 99\}$,\adfsplit
 $\{108, 32, 91, 98\}$, $\{56, 26, 9, 81\}$, $\{71, 52, 46, 13\}$, $\{0, 1, 56, 58\}$,\adfsplit
 $\{0, 3, 84, 112\}$, $\{0, 5, 72, 108\}$, $\{0, 6, 10, 100\}$, $\{0, 9, 17, 104\}$,\adfsplit
 $\{0, 14, 52, 75\}$, $\{0, 18, 27, 96\}$, $\{0, 33, 37, 89\}$, $\{0, 30, 31, 109\}$,\adfsplit
 $\{0, 34, 39, 105\}$, $\{0, 43, 45, 51\}$, $\{0, 54, 62, 65\}$, $\{0, 50, 63, 97\}$,\adfsplit
 $\{0, 55, 67, 90\}$, $\{0, 53, 71, 102\}$, $\{0, 74, 93, 114\}$, $\{0, 91, 95, 113\}$,\adfsplit
 $\{0, 81, 101, 103\}$, $\{0, 73, 82, 115\}$, $\{0, 79, 85, 111\}$, $\{1, 17, 82, 94\}$,\adfsplit
 $\{1, 29, 63, 77\}$, $\{1, 6, 33, 86\}$, $\{1, 15, 31, 75\}$, $\{1, 11, 37, 110\}$,\adfsplit
 $\{1, 30, 71, 107\}$, $\{2, 18, 75, 86\}$, $\{2, 22, 51, 66\}$, $\{2, 47, 71, 99\}$,\adfsplit
 $\{2, 23, 26, 91\}$
}

\noindent under the action of the mapping $x \mapsto x + 4 \adfmod{116}$.
The deficiency graph is connected and has girth 6.

{\noindent\boldmath $\adfPENT(4, 40)$}~ With point set $Z_{125}$, the 1250 lines are generated from

{\scriptsize
 $\{18, 55, 70, 116\}$, $\{10, 14, 87, 92\}$, $\{36, 41, 69, 88\}$, $\{34, 42, 69, 110\}$,\adfsplit
 $\{62, 63, 98, 116\}$, $\{81, 107, 104, 41\}$, $\{62, 30, 119, 71\}$, $\{48, 57, 7, 38\}$,\adfsplit
 $\{100, 23, 27, 78\}$, $\{26, 71, 24, 63\}$, $\{100, 81, 95, 103\}$, $\{58, 15, 60, 102\}$,\adfsplit
 $\{21, 33, 36, 8\}$, $\{3, 114, 18, 32\}$, $\{98, 42, 27, 9\}$, $\{106, 10, 49, 89\}$,\adfsplit
 $\{20, 48, 42, 14\}$, $\{82, 2, 100, 72\}$, $\{124, 87, 51, 24\}$, $\{36, 60, 4, 53\}$,\adfsplit
 $\{92, 27, 25, 46\}$, $\{26, 19, 88, 115\}$, $\{10, 91, 122, 109\}$, $\{13, 72, 114, 92\}$,\adfsplit
 $\{2, 42, 86, 111\}$, $\{0, 1, 12, 102\}$, $\{0, 7, 37, 115\}$, $\{0, 68, 92, 117\}$,\adfsplit
 $\{1, 2, 9, 104\}$, $\{1, 62, 78, 83\}$, $\{1, 22, 36, 96\}$, $\{0, 34, 72, 76\}$,\adfsplit
 $\{1, 7, 33, 39\}$, $\{1, 43, 77, 89\}$, $\{2, 4, 13, 74\}$, $\{0, 26, 60, 122\}$,\adfsplit
 $\{2, 19, 63, 79\}$, $\{0, 6, 16, 108\}$, $\{0, 11, 66, 86\}$, $\{1, 3, 68, 88\}$,\adfsplit
 $\{0, 53, 54, 121\}$, $\{0, 44, 56, 59\}$, $\{0, 31, 38, 85\}$, $\{0, 51, 94, 104\}$,\adfsplit
 $\{0, 23, 74, 90\}$, $\{0, 13, 24, 25\}$, $\{0, 20, 50, 83\}$, $\{0, 14, 19, 64\}$,\adfsplit
 $\{0, 9, 93, 114\}$, $\{3, 29, 33, 83\}$
}

\noindent under the action of the mapping $x \mapsto x + 5 \adfmod{125}$.
The deficiency graph is connected and has girth 7.

{\noindent\boldmath $\adfPENT(4, 45)$}~ With point set $Z_{140}$, the 1575 lines are generated from

{\scriptsize
 $\{15, 66, 68, 72\}$, $\{37, 70, 103, 105\}$, $\{46, 73, 76, 98\}$, $\{41, 63, 83, 128\}$,\adfsplit
 $\{108, 89, 4, 110\}$, $\{47, 44, 133, 99\}$, $\{122, 40, 0, 102\}$, $\{80, 33, 134, 64\}$,\adfsplit
 $\{75, 130, 16, 80\}$, $\{110, 25, 16, 45\}$, $\{138, 85, 122, 109\}$, $\{99, 135, 51, 117\}$,\adfsplit
 $\{64, 52, 35, 29\}$, $\{83, 94, 70, 2\}$, $\{83, 120, 14, 113\}$, $\{8, 109, 92, 65\}$,\adfsplit
 $\{68, 27, 137, 25\}$, $\{79, 119, 47, 21\}$, $\{79, 58, 94, 52\}$, $\{1, 2, 93, 12\}$,\adfsplit
 $\{93, 81, 29, 74\}$, $\{45, 131, 127, 54\}$, $\{126, 119, 0, 23\}$, $\{48, 115, 139, 125\}$,\adfsplit
 $\{133, 72, 71, 90\}$, $\{0, 1, 5, 96\}$, $\{0, 13, 73, 107\}$, $\{1, 11, 33, 126\}$,\adfsplit
 $\{1, 17, 78, 101\}$, $\{0, 26, 31, 37\}$, $\{1, 63, 71, 110\}$, $\{0, 25, 98, 115\}$,\adfsplit
 $\{0, 14, 65, 80\}$, $\{0, 33, 41, 79\}$, $\{1, 18, 58, 115\}$, $\{0, 7, 21, 71\}$,\adfsplit
 $\{1, 6, 82, 138\}$, $\{0, 78, 90, 131\}$, $\{2, 6, 51, 67\}$, $\{0, 51, 63, 86\}$,\adfsplit
 $\{0, 10, 92, 112\}$, $\{0, 30, 39, 52\}$, $\{0, 19, 46, 47\}$, $\{0, 8, 32, 43\}$,\adfsplit
 $\{2, 31, 34, 62\}$
}

\noindent under the action of the mapping $x \mapsto x + 4 \adfmod{140}$.
The deficiency graph is connected and has girth 7.

{\noindent\boldmath $\adfPENT(4, 49)$}~ With point set $Z_{152}$, the 1862 lines are generated from

{\scriptsize
 $\{38, 43, 75, 86\}$, $\{50, 58, 100, 117\}$, $\{97, 105, 110, 140\}$, $\{29, 80, 110, 112\}$,\adfsplit
 $\{18, 57, 101, 119\}$, $\{15, 41, 60, 131\}$, $\{50, 51, 72, 120\}$, $\{44, 63, 103, 149\}$,\adfsplit
 $\{73, 28, 46, 139\}$, $\{10, 14, 78, 137\}$, $\{145, 53, 111, 103\}$, $\{112, 69, 140, 55\}$,\adfsplit
 $\{23, 128, 110, 12\}$, $\{113, 84, 146, 88\}$, $\{72, 33, 114, 69\}$, $\{92, 95, 68, 66\}$,\adfsplit
 $\{106, 124, 63, 14\}$, $\{138, 29, 118, 133\}$, $\{104, 94, 125, 113\}$, $\{9, 110, 133, 54\}$,\adfsplit
 $\{8, 52, 145, 84\}$, $\{114, 137, 46, 133\}$, $\{118, 150, 83, 27\}$, $\{16, 108, 133, 94\}$,\adfsplit
 $\{120, 39, 93, 38\}$, $\{77, 64, 115, 36\}$, $\{55, 133, 57, 35\}$, $\{147, 85, 106, 138\}$,\adfsplit
 $\{71, 88, 55, 90\}$, $\{102, 61, 69, 85\}$, $\{104, 57, 91, 9\}$, $\{116, 20, 107, 126\}$,\adfsplit
 $\{108, 5, 123, 105\}$, $\{0, 3, 45, 77\}$, $\{0, 1, 29, 85\}$, $\{0, 4, 61, 133\}$,\adfsplit
 $\{0, 18, 53, 140\}$, $\{0, 5, 7, 20\}$, $\{1, 3, 21, 133\}$, $\{1, 2, 5, 93\}$,\adfsplit
 $\{0, 15, 37, 94\}$, $\{0, 23, 46, 93\}$, $\{2, 7, 118, 125\}$, $\{1, 18, 69, 118\}$,\adfsplit
 $\{0, 14, 26, 141\}$, $\{1, 15, 141, 150\}$, $\{2, 12, 13, 14\}$, $\{1, 12, 28, 101\}$,\adfsplit
 $\{1, 27, 109, 116\}$, $\{2, 27, 29, 60\}$, $\{2, 79, 108, 141\}$, $\{3, 7, 13, 115\}$,\adfsplit
 $\{3, 11, 60, 69\}$, $\{2, 39, 77, 83\}$, $\{3, 79, 109, 148\}$, $\{3, 15, 125, 139\}$,\adfsplit
 $\{0, 6, 68, 108\}$, $\{0, 12, 17, 100\}$, $\{0, 35, 52, 60\}$, $\{1, 31, 52, 132\}$,\adfsplit
 $\{0, 50, 84, 132\}$, $\{0, 11, 110, 116\}$, $\{2, 19, 22, 148\}$, $\{1, 22, 43, 140\}$,\adfsplit
 $\{2, 23, 38, 132\}$, $\{1, 68, 103, 142\}$, $\{2, 30, 46, 116\}$, $\{1, 38, 76, 107\}$,\adfsplit
 $\{2, 15, 68, 107\}$, $\{2, 54, 76, 151\}$, $\{2, 95, 99, 100\}$, $\{0, 22, 31, 102\}$,\adfsplit
 $\{3, 30, 135, 142\}$, $\{1, 33, 86, 110\}$, $\{0, 118, 126, 151\}$, $\{0, 54, 111, 143\}$,\adfsplit
 $\{2, 51, 75, 126\}$, $\{1, 39, 70, 146\}$, $\{0, 62, 66, 122\}$, $\{2, 35, 87, 123\}$,\adfsplit
 $\{1, 23, 47, 95\}$, $\{0, 8, 87, 115\}$, $\{1, 87, 99, 147\}$, $\{0, 27, 88, 127\}$,\adfsplit
 $\{2, 55, 82, 143\}$, $\{1, 11, 25, 79\}$, $\{1, 7, 17, 89\}$, $\{0, 55, 73, 123\}$,\adfsplit
 $\{0, 34, 40, 103\}$, $\{0, 59, 74, 98\}$, $\{0, 81, 90, 121\}$, $\{1, 42, 90, 106\}$,\adfsplit
 $\{0, 33, 89, 147\}$, $\{0, 19, 99, 106\}$, $\{1, 75, 98, 138\}$, $\{0, 49, 56, 72\}$,\adfsplit
 $\{0, 10, 67, 97\}$, $\{0, 24, 65, 138\}$
}

\noindent under the action of the mapping $x \mapsto x + 8 \adfmod{152}$.
The deficiency graph is connected and has girth 6.

{\noindent\boldmath $\adfPENT(4, 52)$}~ With point set $Z_{161}$, the 2093 lines are generated from

{\scriptsize
 $\{14, 81, 143, 147\}$, $\{93, 136, 139, 149\}$, $\{4, 15, 71, 108\}$, $\{21, 29, 33, 58\}$,\adfsplit
 $\{2, 40, 84, 117\}$, $\{53, 90, 130, 136\}$, $\{29, 41, 82, 132\}$, $\{58, 125, 93, 144\}$,\adfsplit
 $\{67, 54, 105, 149\}$, $\{96, 107, 95, 125\}$, $\{4, 74, 92, 155\}$, $\{48, 26, 46, 69\}$,\adfsplit
 $\{17, 113, 36, 52\}$, $\{34, 141, 66, 38\}$, $\{39, 70, 24, 118\}$, $\{114, 27, 92, 57\}$,\adfsplit
 $\{149, 40, 22, 49\}$, $\{46, 88, 28, 23\}$, $\{131, 73, 139, 58\}$, $\{101, 3, 150, 126\}$,\adfsplit
 $\{158, 94, 7, 4\}$, $\{111, 13, 84, 28\}$, $\{83, 97, 44, 140\}$, $\{10, 37, 57, 132\}$,\adfsplit
 $\{155, 60, 8, 115\}$, $\{144, 160, 44, 143\}$, $\{156, 157, 130, 75\}$, $\{151, 101, 108, 79\}$,\adfsplit
 $\{7, 54, 132, 87\}$, $\{69, 142, 11, 5\}$, $\{79, 76, 32, 105\}$, $\{124, 142, 68, 73\}$,\adfsplit
 $\{149, 147, 140, 145\}$, $\{152, 57, 58, 7\}$, $\{18, 146, 140, 105\}$, $\{80, 14, 59, 156\}$,\adfsplit
 $\{55, 56, 19, 6\}$, $\{0, 1, 10, 20\}$, $\{0, 3, 34, 62\}$, $\{0, 4, 13, 69\}$,\adfsplit
 $\{0, 17, 101, 153\}$, $\{1, 3, 45, 146\}$, $\{0, 16, 52, 76\}$, $\{0, 15, 55, 139\}$,\adfsplit
 $\{0, 29, 97, 108\}$, $\{1, 4, 52, 160\}$, $\{1, 9, 111, 118\}$, $\{1, 29, 65, 90\}$,\adfsplit
 $\{1, 6, 33, 48\}$, $\{1, 16, 26, 153\}$, $\{1, 34, 40, 102\}$, $\{1, 72, 79, 83\}$,\adfsplit
 $\{2, 11, 19, 125\}$, $\{2, 5, 59, 146\}$, $\{1, 27, 75, 150\}$, $\{3, 19, 76, 130\}$,\adfsplit
 $\{3, 11, 32, 83\}$, $\{2, 33, 61, 132\}$, $\{4, 47, 138, 139\}$, $\{4, 18, 104, 116\}$,\adfsplit
 $\{0, 75, 122, 128\}$, $\{0, 59, 61, 150\}$, $\{0, 23, 73, 107\}$, $\{1, 44, 86, 157\}$,\adfsplit
 $\{2, 10, 51, 95\}$, $\{0, 30, 93, 121\}$, $\{1, 32, 135, 156\}$, $\{2, 80, 103, 124\}$,\adfsplit
 $\{0, 64, 94, 114\}$, $\{2, 16, 123, 158\}$, $\{0, 43, 77, 149\}$, $\{0, 53, 86, 131\}$,\adfsplit
 $\{1, 8, 121, 137\}$, $\{0, 26, 58, 120\}$, $\{0, 11, 31, 68\}$, $\{1, 24, 46, 117\}$,\adfsplit
 $\{3, 39, 68, 108\}$, $\{1, 88, 116, 129\}$, $\{1, 47, 73, 82\}$, $\{3, 103, 145, 152\}$,\adfsplit
 $\{1, 18, 123, 138\}$, $\{0, 40, 63, 109\}$, $\{1, 22, 60, 145\}$, $\{0, 32, 54, 85\}$,\adfsplit
 $\{0, 42, 144, 148\}$, $\{0, 21, 57, 99\}$, $\{0, 19, 91, 116\}$, $\{0, 103, 113, 137\}$,\adfsplit
 $\{0, 22, 92, 112\}$, $\{0, 82, 96, 155\}$, $\{1, 12, 110, 113\}$
}

\noindent under the action of the mapping $x \mapsto x + 7 \adfmod{161}$.
The deficiency graph is connected and has girth 7.

{\noindent\boldmath $\adfPENT(4, 53)$}~ With point set $Z_{164}$, the 2173 lines are generated from

{\scriptsize
 $\{2, 12, 121, 152\}$, $\{17, 44, 123, 149\}$, $\{0, 30, 138, 151\}$, $\{18, 45, 55, 115\}$,\adfsplit
 $\{136, 58, 22, 149\}$, $\{106, 148, 60, 25\}$, $\{82, 91, 17, 6\}$, $\{25, 69, 103, 83\}$,\adfsplit
 $\{35, 14, 62, 116\}$, $\{20, 112, 38, 122\}$, $\{78, 157, 102, 155\}$, $\{50, 49, 154, 91\}$,\adfsplit
 $\{16, 97, 64, 67\}$, $\{122, 126, 141, 67\}$, $\{128, 163, 88, 82\}$, $\{123, 76, 55, 39\}$,\adfsplit
 $\{79, 13, 25, 73\}$, $\{119, 2, 117, 125\}$, $\{74, 21, 82, 121\}$, $\{90, 112, 52, 48\}$,\adfsplit
 $\{6, 150, 35, 24\}$, $\{141, 75, 123, 140\}$, $\{86, 46, 14, 103\}$, $\{155, 31, 92, 62\}$,\adfsplit
 $\{128, 147, 154, 60\}$, $\{0, 5, 82, 98\}$, $\{0, 6, 7, 58\}$, $\{0, 9, 34, 130\}$,\adfsplit
 $\{1, 5, 34, 134\}$, $\{0, 31, 54, 66\}$, $\{0, 8, 69, 78\}$, $\{0, 15, 23, 114\}$,\adfsplit
 $\{0, 22, 25, 36\}$, $\{1, 6, 29, 69\}$, $\{1, 14, 19, 58\}$, $\{0, 55, 74, 125\}$,\adfsplit
 $\{0, 65, 141, 162\}$, $\{0, 95, 106, 131\}$, $\{1, 87, 90, 151\}$, $\{1, 39, 63, 158\}$,\adfsplit
 $\{1, 18, 51, 113\}$, $\{0, 27, 59, 115\}$, $\{0, 93, 119, 123\}$, $\{1, 25, 143, 155\}$,\adfsplit
 $\{0, 21, 43, 136\}$, $\{0, 39, 44, 89\}$, $\{0, 37, 117, 163\}$, $\{0, 67, 111, 139\}$,\adfsplit
 $\{0, 16, 107, 161\}$, $\{0, 17, 32, 84\}$, $\{0, 57, 77, 113\}$, $\{0, 20, 73, 155\}$,\adfsplit
 $\{0, 29, 101, 108\}$
}

\noindent under the action of the mapping $x \mapsto x + 4 \adfmod{164}$.
The deficiency graph is connected and has girth 7.

{\noindent\boldmath $\adfPENT(4, 60)$}~ With point set $Z_{185}$, the 2775 lines are generated from

{\scriptsize
 $\{21, 158, 167, 184\}$, $\{78, 98, 102, 165\}$, $\{20, 86, 108, 139\}$, $\{30, 82, 91, 111\}$,\adfsplit
 $\{5, 29, 52, 164\}$, $\{105, 5, 100, 59\}$, $\{156, 172, 23, 70\}$, $\{52, 11, 65, 57\}$,\adfsplit
 $\{171, 25, 114, 117\}$, $\{166, 38, 168, 21\}$, $\{88, 25, 37, 19\}$, $\{180, 31, 36, 119\}$,\adfsplit
 $\{43, 114, 11, 150\}$, $\{95, 143, 125, 151\}$, $\{96, 28, 172, 13\}$, $\{148, 137, 143, 14\}$,\adfsplit
 $\{119, 11, 109, 80\}$, $\{69, 77, 99, 132\}$, $\{29, 136, 103, 16\}$, $\{72, 93, 166, 104\}$,\adfsplit
 $\{78, 108, 49, 160\}$, $\{29, 82, 179, 124\}$, $\{40, 85, 150, 154\}$, $\{148, 75, 106, 169\}$,\adfsplit
 $\{135, 71, 143, 150\}$, $\{0, 1, 9, 115\}$, $\{0, 2, 6, 50\}$, $\{0, 3, 17, 175\}$,\adfsplit
 $\{0, 7, 80, 96\}$, $\{0, 14, 35, 165\}$, $\{0, 11, 40, 59\}$, $\{0, 22, 23, 25\}$,\adfsplit
 $\{0, 28, 60, 93\}$, $\{0, 32, 42, 99\}$, $\{0, 38, 44, 143\}$, $\{0, 37, 49, 83\}$,\adfsplit
 $\{0, 53, 72, 163\}$, $\{0, 43, 68, 168\}$, $\{0, 57, 64, 128\}$, $\{0, 67, 82, 174\}$,\adfsplit
 $\{0, 51, 84, 101\}$, $\{0, 76, 97, 113\}$, $\{0, 62, 109, 123\}$, $\{0, 58, 104, 148\}$,\adfsplit
 $\{0, 87, 107, 132\}$, $\{0, 77, 91, 152\}$, $\{0, 108, 126, 157\}$, $\{0, 127, 129, 161\}$,\adfsplit
 $\{0, 142, 169, 173\}$, $\{0, 117, 166, 176\}$, $\{0, 136, 151, 154\}$, $\{0, 111, 147, 171\}$,\adfsplit
 $\{0, 134, 162, 181\}$, $\{1, 2, 81, 87\}$, $\{1, 12, 82, 117\}$, $\{2, 62, 143, 144\}$,\adfsplit
 $\{2, 54, 67, 133\}$, $\{1, 52, 108, 142\}$, $\{2, 52, 129, 148\}$, $\{1, 27, 67, 143\}$,\adfsplit
 $\{1, 28, 56, 112\}$, $\{2, 32, 104, 149\}$, $\{2, 39, 88, 158\}$, $\{1, 8, 72, 183\}$,\adfsplit
 $\{1, 24, 63, 122\}$, $\{3, 44, 84, 138\}$, $\{1, 36, 83, 128\}$, $\{1, 13, 74, 163\}$,\adfsplit
 $\{1, 133, 149, 173\}$, $\{1, 39, 123, 159\}$, $\{1, 59, 68, 144\}$, $\{1, 26, 71, 101\}$,\adfsplit
 $\{1, 94, 114, 174\}$, $\{1, 69, 179, 184\}$, $\{1, 29, 44, 96\}$
}

\noindent under the action of the mapping $x \mapsto x + 5 \adfmod{185}$.
The deficiency graph is connected and has girth 7.

{\noindent\boldmath $\adfPENT(4, 61)$}~ With point set $Z_{188}$, the 2867 lines are generated from

{\scriptsize
 $\{27, 71, 74, 177\}$, $\{12, 21, 75, 169\}$, $\{26, 83, 116, 166\}$, $\{110, 117, 120, 164\}$,\adfsplit
 $\{39, 176, 16, 178\}$, $\{106, 123, 62, 19\}$, $\{144, 180, 135, 76\}$, $\{124, 172, 107, 9\}$,\adfsplit
 $\{66, 154, 37, 29\}$, $\{86, 47, 70, 179\}$, $\{163, 64, 158, 49\}$, $\{0, 1, 7, 127\}$,\adfsplit
 $\{0, 3, 4, 83\}$, $\{0, 5, 15, 47\}$, $\{0, 6, 31, 39\}$, $\{0, 8, 111, 115\}$,\adfsplit
 $\{0, 10, 55, 147\}$, $\{0, 11, 139, 159\}$, $\{0, 13, 75, 91\}$, $\{0, 14, 67, 183\}$,\adfsplit
 $\{0, 17, 135, 163\}$, $\{0, 19, 119, 131\}$, $\{0, 18, 151, 175\}$, $\{0, 29, 43, 167\}$,\adfsplit
 $\{0, 22, 35, 87\}$, $\{1, 2, 23, 71\}$, $\{1, 3, 5, 155\}$, $\{0, 49, 95, 154\}$,\adfsplit
 $\{1, 6, 18, 91\}$, $\{1, 10, 51, 62\}$, $\{1, 13, 31, 46\}$, $\{1, 14, 135, 161\}$,\adfsplit
 $\{1, 17, 53, 183\}$, $\{1, 22, 59, 141\}$, $\{1, 26, 35, 157\}$, $\{1, 25, 111, 125\}$,\adfsplit
 $\{1, 27, 57, 138\}$, $\{1, 54, 69, 171\}$, $\{1, 66, 150, 179\}$, $\{1, 142, 143, 178\}$,\adfsplit
 $\{1, 123, 130, 186\}$, $\{2, 10, 99, 126\}$, $\{0, 12, 33, 113\}$, $\{0, 16, 53, 145\}$,\adfsplit
 $\{0, 20, 61, 137\}$, $\{0, 24, 89, 149\}$, $\{0, 34, 77, 121\}$, $\{0, 57, 92, 161\}$,\adfsplit
 $\{1, 73, 166, 170\}$, $\{0, 54, 85, 174\}$, $\{0, 81, 130, 158\}$, $\{0, 45, 118, 124\}$,\adfsplit
 $\{0, 42, 97, 116\}$, $\{0, 70, 105, 112\}$, $\{0, 32, 90, 165\}$, $\{0, 26, 66, 93\}$,\adfsplit
 $\{0, 40, 122, 142\}$, $\{0, 38, 88, 150\}$, $\{0, 46, 126, 128\}$, $\{0, 78, 110, 170\}$,\adfsplit
 $\{0, 30, 52, 132\}$
}

\noindent under the action of the mapping $x \mapsto x + 4 \adfmod{188}$.
The deficiency graph is connected and has girth 7.

{\noindent\boldmath $\adfPENT(4, 65)$}~ With point set $Z_{200}$, the 3250 lines are generated from

{\scriptsize
 $\{42, 178, 188, 197\}$, $\{3, 81, 90, 121\}$, $\{24, 100, 113, 160\}$, $\{1, 23, 37, 143\}$,\adfsplit
 $\{6, 16, 63, 106\}$, $\{8, 37, 171, 173\}$, $\{4, 102, 110, 151\}$, $\{62, 67, 148, 187\}$,\adfsplit
 $\{48, 139, 113, 147\}$, $\{118, 46, 112, 166\}$, $\{51, 27, 36, 141\}$, $\{118, 81, 60, 195\}$,\adfsplit
 $\{56, 14, 41, 179\}$, $\{178, 57, 175, 62\}$, $\{30, 59, 189, 64\}$, $\{59, 46, 178, 147\}$,\adfsplit
 $\{87, 9, 21, 26\}$, $\{99, 103, 145, 141\}$, $\{63, 138, 69, 109\}$, $\{153, 142, 151, 2\}$,\adfsplit
 $\{34, 68, 161, 146\}$, $\{106, 188, 36, 152\}$, $\{97, 7, 42, 174\}$, $\{191, 45, 102, 58\}$,\adfsplit
 $\{166, 135, 48, 83\}$, $\{110, 134, 135, 117\}$, $\{67, 116, 140, 111\}$, $\{86, 100, 48, 108\}$,\adfsplit
 $\{70, 120, 135, 163\}$, $\{88, 157, 100, 29\}$, $\{180, 165, 18, 196\}$, $\{93, 88, 49, 20\}$,\adfsplit
 $\{40, 67, 195, 175\}$, $\{131, 108, 64, 125\}$, $\{154, 41, 51, 187\}$, $\{16, 193, 96, 25\}$,\adfsplit
 $\{30, 82, 75, 136\}$, $\{114, 42, 67, 122\}$, $\{106, 22, 7, 136\}$, $\{4, 45, 143, 100\}$,\adfsplit
 $\{61, 62, 152, 126\}$, $\{84, 114, 179, 116\}$, $\{22, 180, 173, 95\}$, $\{61, 2, 111, 119\}$,\adfsplit
 $\{109, 61, 117, 160\}$, $\{0, 1, 59, 75\}$, $\{0, 2, 51, 107\}$, $\{0, 4, 11, 115\}$,\adfsplit
 $\{0, 7, 19, 171\}$, $\{0, 10, 147, 187\}$, $\{0, 3, 14, 30\}$, $\{1, 2, 19, 187\}$,\adfsplit
 $\{0, 17, 83, 126\}$, $\{0, 46, 77, 179\}$, $\{0, 13, 130, 131\}$, $\{1, 4, 98, 107\}$,\adfsplit
 $\{1, 5, 51, 68\}$, $\{1, 9, 99, 109\}$, $\{1, 7, 165, 195\}$, $\{1, 20, 53, 179\}$,\adfsplit
 $\{1, 14, 125, 163\}$, $\{1, 22, 69, 131\}$, $\{1, 28, 29, 171\}$, $\{1, 12, 43, 102\}$,\adfsplit
 $\{2, 6, 18, 91\}$, $\{2, 7, 37, 59\}$, $\{2, 22, 39, 43\}$, $\{2, 69, 163, 182\}$,\adfsplit
 $\{2, 15, 123, 141\}$, $\{3, 4, 71, 142\}$, $\{2, 44, 62, 131\}$, $\{3, 28, 79, 189\}$,\adfsplit
 $\{3, 29, 31, 148\}$, $\{3, 6, 87, 150\}$, $\{3, 53, 63, 135\}$, $\{3, 36, 125, 149\}$,\adfsplit
 $\{3, 103, 124, 127\}$, $\{3, 119, 156, 167\}$, $\{3, 132, 159, 166\}$, $\{0, 16, 53, 133\}$,\adfsplit
 $\{0, 33, 173, 189\}$, $\{0, 22, 28, 93\}$, $\{1, 15, 94, 189\}$, $\{0, 58, 85, 181\}$,\adfsplit
 $\{1, 21, 30, 133\}$, $\{0, 20, 45, 182\}$, $\{1, 26, 62, 117\}$, $\{1, 39, 61, 86\}$,\adfsplit
 $\{1, 25, 158, 173\}$, $\{0, 21, 102, 106\}$, $\{2, 13, 143, 170\}$, $\{1, 77, 124, 198\}$,\adfsplit
 $\{1, 60, 95, 181\}$, $\{1, 85, 106, 118\}$, $\{2, 26, 52, 101\}$, $\{2, 5, 42, 98\}$,\adfsplit
 $\{1, 31, 54, 93\}$, $\{2, 30, 53, 76\}$, $\{4, 101, 135, 174\}$, $\{2, 78, 156, 166\}$,\adfsplit
 $\{1, 58, 150, 182\}$, $\{1, 100, 126, 166\}$, $\{0, 68, 148, 198\}$, $\{0, 18, 25, 70\}$,\adfsplit
 $\{2, 79, 92, 174\}$, $\{0, 78, 122, 175\}$, $\{0, 62, 116, 121\}$, $\{0, 34, 73, 142\}$,\adfsplit
 $\{0, 108, 114, 172\}$, $\{0, 55, 100, 156\}$, $\{0, 74, 92, 112\}$, $\{0, 124, 138, 186\}$,\adfsplit
 $\{1, 36, 76, 148\}$, $\{1, 44, 113, 178\}$, $\{0, 57, 105, 196\}$, $\{0, 66, 111, 137\}$,\adfsplit
 $\{1, 55, 87, 164\}$, $\{1, 42, 63, 156\}$, $\{0, 81, 164, 183\}$, $\{1, 52, 127, 137\}$,\adfsplit
 $\{1, 17, 73, 154\}$, $\{0, 26, 144, 193\}$, $\{0, 41, 145, 191\}$, $\{0, 79, 95, 113\}$,\adfsplit
 $\{0, 39, 103, 169\}$, $\{0, 31, 50, 143\}$, $\{2, 31, 87, 191\}$, $\{0, 63, 96, 194\}$,\adfsplit
 $\{0, 23, 24, 72\}$, $\{0, 8, 127, 168\}$
}

\noindent under the action of the mapping $x \mapsto x + 8 \adfmod{200}$.
The deficiency graph is connected and has girth 5.

{\noindent\boldmath $\adfPENT(4, 69)$}~ With point set $Z_{212}$, the 3657 lines are generated from

{\scriptsize
 $\{22, 117, 169, 195\}$, $\{26, 44, 75, 96\}$, $\{78, 138, 189, 192\}$, $\{20, 23, 141, 195\}$,\adfsplit
 $\{92, 13, 134, 119\}$, $\{79, 54, 195, 90\}$, $\{63, 119, 162, 125\}$, $\{138, 68, 24, 77\}$,\adfsplit
 $\{131, 189, 66, 69\}$, $\{140, 65, 60, 47\}$, $\{62, 178, 137, 130\}$, $\{53, 186, 93, 205\}$,\adfsplit
 $\{189, 153, 200, 7\}$, $\{69, 46, 101, 120\}$, $\{91, 139, 196, 110\}$, $\{19, 110, 123, 72\}$,\adfsplit
 $\{127, 42, 115, 38\}$, $\{171, 104, 89, 62\}$, $\{35, 108, 154, 82\}$, $\{129, 109, 33, 151\}$,\adfsplit
 $\{34, 134, 105, 140\}$, $\{89, 88, 138, 55\}$, $\{169, 30, 178, 54\}$, $\{119, 209, 20, 60\}$,\adfsplit
 $\{181, 41, 55, 122\}$, $\{95, 40, 56, 149\}$, $\{66, 10, 63, 113\}$, $\{1, 174, 176, 99\}$,\adfsplit
 $\{136, 13, 9, 210\}$, $\{0, 4, 17, 25\}$, $\{0, 6, 41, 205\}$, $\{0, 2, 33, 181\}$,\adfsplit
 $\{0, 7, 77, 185\}$, $\{0, 8, 57, 81\}$, $\{0, 15, 45, 125\}$, $\{0, 29, 84, 153\}$,\adfsplit
 $\{0, 18, 105, 173\}$, $\{1, 3, 157, 169\}$, $\{0, 12, 113, 141\}$, $\{1, 2, 54, 197\}$,\adfsplit
 $\{0, 23, 61, 145\}$, $\{0, 14, 97, 118\}$, $\{0, 63, 65, 178\}$, $\{1, 51, 106, 146\}$,\adfsplit
 $\{1, 6, 7, 86\}$, $\{1, 11, 19, 46\}$, $\{1, 39, 127, 150\}$, $\{1, 111, 115, 187\}$,\adfsplit
 $\{1, 131, 147, 191\}$, $\{1, 71, 171, 199\}$, $\{1, 135, 167, 203\}$, $\{1, 78, 139, 170\}$,\adfsplit
 $\{0, 11, 75, 176\}$, $\{0, 20, 91, 143\}$, $\{0, 82, 115, 207\}$, $\{0, 35, 86, 167\}$,\adfsplit
 $\{0, 66, 87, 150\}$, $\{0, 34, 79, 103\}$, $\{0, 10, 32, 163\}$, $\{0, 26, 43, 72\}$,\adfsplit
 $\{2, 11, 18, 186\}$, $\{0, 24, 88, 171\}$, $\{0, 60, 154, 211\}$, $\{0, 30, 62, 112\}$,\adfsplit
 $\{0, 76, 198, 203\}$, $\{0, 90, 102, 110\}$, $\{0, 54, 95, 120\}$, $\{0, 48, 104, 182\}$,\adfsplit
 $\{0, 58, 68, 96\}$
}

\noindent under the action of the mapping $x \mapsto x + 4 \adfmod{212}$.
The deficiency graph is connected and has girth 7.

{\noindent\boldmath $\adfPENT(4, 77)$}~ With point set $Z_{236}$, the 4543 lines are generated from

{\scriptsize
 $\{56, 113, 180, 211\}$, $\{2, 9, 124, 229\}$, $\{1, 22, 55, 218\}$, $\{28, 91, 151, 186\}$,\adfsplit
 $\{123, 1, 23, 198\}$, $\{37, 146, 205, 167\}$, $\{113, 220, 83, 26\}$, $\{95, 99, 106, 10\}$,\adfsplit
 $\{106, 103, 89, 17\}$, $\{216, 145, 132, 235\}$, $\{194, 20, 72, 64\}$, $\{34, 47, 163, 76\}$,\adfsplit
 $\{189, 193, 36, 18\}$, $\{143, 17, 91, 51\}$, $\{21, 182, 212, 11\}$, $\{172, 222, 132, 186\}$,\adfsplit
 $\{144, 219, 202, 98\}$, $\{230, 97, 148, 16\}$, $\{152, 164, 98, 145\}$, $\{221, 20, 93, 196\}$,\adfsplit
 $\{71, 150, 113, 33\}$, $\{222, 179, 41, 83\}$, $\{38, 96, 43, 87\}$, $\{44, 203, 50, 51\}$,\adfsplit
 $\{162, 131, 49, 143\}$, $\{133, 185, 102, 118\}$, $\{190, 134, 163, 18\}$, $\{66, 110, 78, 80\}$,\adfsplit
 $\{182, 51, 109, 129\}$, $\{192, 208, 42, 159\}$, $\{57, 20, 114, 22\}$, $\{116, 183, 121, 55\}$,\adfsplit
 $\{135, 92, 124, 24\}$, $\{81, 5, 144, 10\}$, $\{111, 20, 222, 137\}$, $\{10, 82, 13, 91\}$,\adfsplit
 $\{120, 97, 189, 107\}$, $\{0, 1, 78, 146\}$, $\{0, 3, 26, 106\}$, $\{0, 10, 198, 208\}$,\adfsplit
 $\{0, 15, 150, 210\}$, $\{0, 4, 66, 166\}$, $\{0, 9, 110, 138\}$, $\{0, 21, 118, 126\}$,\adfsplit
 $\{0, 18, 156, 230\}$, $\{0, 46, 71, 134\}$, $\{0, 34, 61, 186\}$, $\{1, 13, 42, 46\}$,\adfsplit
 $\{1, 14, 122, 174\}$, $\{1, 26, 138, 181\}$, $\{1, 19, 142, 153\}$, $\{1, 3, 50, 231\}$,\adfsplit
 $\{1, 7, 94, 163\}$, $\{0, 22, 116, 199\}$, $\{1, 158, 203, 235\}$, $\{0, 27, 139, 154\}$,\adfsplit
 $\{1, 115, 143, 214\}$, $\{1, 119, 135, 186\}$, $\{2, 39, 95, 199\}$, $\{0, 17, 127, 191\}$,\adfsplit
 $\{0, 20, 167, 235\}$, $\{0, 59, 107, 131\}$, $\{0, 135, 171, 221\}$, $\{0, 51, 141, 181\}$,\adfsplit
 $\{0, 49, 95, 115\}$, $\{0, 89, 119, 137\}$, $\{0, 55, 77, 109\}$, $\{0, 39, 53, 96\}$,\adfsplit
 $\{0, 93, 163, 209\}$, $\{0, 76, 161, 219\}$, $\{0, 33, 36, 144\}$, $\{0, 23, 64, 212\}$,\adfsplit
 $\{0, 65, 205, 231\}$, $\{0, 48, 197, 225\}$, $\{0, 29, 79, 164\}$, $\{0, 41, 145, 189\}$,\adfsplit
 $\{1, 25, 61, 125\}$
}

\noindent under the action of the mapping $x \mapsto x + 4 \adfmod{236}$.
The deficiency graph is connected and has girth 7.

{\noindent\boldmath $\adfPENT(4, 80)$}~ With point set $Z_{245}$, the 4900 lines are generated from

{\scriptsize
 $\{31, 43, 79, 178\}$, $\{16, 98, 215, 231\}$, $\{32, 54, 193, 217\}$, $\{57, 70, 151, 205\}$,\adfsplit
 $\{29, 170, 197, 224\}$, $\{192, 76, 48, 209\}$, $\{184, 93, 19, 126\}$, $\{32, 65, 79, 216\}$,\adfsplit
 $\{52, 58, 152, 49\}$, $\{146, 4, 128, 14\}$, $\{195, 126, 154, 49\}$, $\{52, 11, 206, 184\}$,\adfsplit
 $\{136, 156, 43, 130\}$, $\{5, 85, 63, 100\}$, $\{227, 55, 91, 146\}$, $\{126, 227, 46, 11\}$,\adfsplit
 $\{15, 77, 201, 84\}$, $\{222, 42, 33, 92\}$, $\{54, 96, 175, 8\}$, $\{107, 187, 40, 89\}$,\adfsplit
 $\{227, 107, 142, 150\}$, $\{174, 185, 229, 220\}$, $\{68, 234, 34, 136\}$, $\{157, 180, 123, 152\}$,\adfsplit
 $\{217, 10, 127, 111\}$, $\{179, 87, 195, 1\}$, $\{100, 51, 168, 95\}$, $\{85, 25, 33, 7\}$,\adfsplit
 $\{14, 21, 176, 19\}$, $\{29, 69, 213, 185\}$, $\{163, 61, 27, 58\}$, $\{175, 185, 109, 12\}$,\adfsplit
 $\{71, 128, 96, 59\}$, $\{112, 132, 186, 86\}$, $\{219, 138, 123, 223\}$, $\{147, 74, 104, 139\}$,\adfsplit
 $\{0, 1, 4, 164\}$, $\{0, 2, 39, 139\}$, $\{0, 3, 189, 209\}$, $\{0, 7, 19, 94\}$,\adfsplit
 $\{0, 11, 224, 239\}$, $\{0, 12, 84, 174\}$, $\{0, 13, 64, 214\}$, $\{0, 18, 24, 154\}$,\adfsplit
 $\{0, 17, 59, 119\}$, $\{0, 21, 34, 144\}$, $\{0, 22, 149, 219\}$, $\{0, 41, 74, 194\}$,\adfsplit
 $\{0, 23, 109, 142\}$, $\{0, 25, 159, 182\}$, $\{0, 29, 42, 242\}$, $\{0, 33, 141, 184\}$,\adfsplit
 $\{1, 2, 3, 219\}$, $\{1, 6, 12, 194\}$, $\{1, 8, 9, 186\}$, $\{1, 18, 121, 199\}$,\adfsplit
 $\{0, 37, 206, 244\}$, $\{1, 11, 48, 159\}$, $\{1, 22, 24, 43\}$, $\{1, 19, 201, 223\}$,\adfsplit
 $\{1, 28, 32, 94\}$, $\{0, 47, 61, 114\}$, $\{1, 63, 119, 181\}$, $\{1, 53, 74, 88\}$,\adfsplit
 $\{2, 12, 48, 189\}$, $\{2, 42, 83, 159\}$, $\{2, 13, 57, 169\}$, $\{2, 59, 113, 152\}$,\adfsplit
 $\{2, 68, 78, 194\}$, $\{3, 14, 188, 228\}$, $\{2, 98, 124, 148\}$, $\{2, 93, 109, 158\}$,\adfsplit
 $\{0, 30, 87, 192\}$, $\{0, 20, 127, 152\}$, $\{0, 32, 76, 116\}$, $\{0, 92, 131, 241\}$,\adfsplit
 $\{0, 102, 106, 181\}$, $\{0, 52, 121, 187\}$, $\{1, 71, 163, 227\}$, $\{1, 78, 168, 237\}$,\adfsplit
 $\{1, 112, 127, 233\}$, $\{1, 72, 123, 142\}$, $\{1, 147, 208, 222\}$, $\{0, 56, 112, 161\}$,\adfsplit
 $\{0, 86, 122, 236\}$, $\{1, 108, 187, 203\}$, $\{0, 63, 126, 177\}$, $\{2, 73, 118, 198\}$,\adfsplit
 $\{0, 93, 123, 197\}$, $\{0, 71, 156, 170\}$, $\{0, 45, 163, 216\}$, $\{0, 40, 88, 136\}$,\adfsplit
 $\{0, 66, 90, 238\}$, $\{0, 100, 203, 211\}$, $\{0, 53, 168, 226\}$, $\{0, 65, 173, 198\}$,\adfsplit
 $\{0, 38, 130, 228\}$, $\{0, 50, 120, 233\}$, $\{0, 55, 138, 140\}$, $\{0, 143, 213, 218\}$
}

\noindent under the action of the mapping $x \mapsto x + 5 \adfmod{245}$.
The deficiency graph is connected and has girth 7.

{\noindent\boldmath $\adfPENT(4, 81)$}~ With point set $Z_{248}$, the 5022 lines are generated from

{\scriptsize
 $\{88, 122, 160, 204\}$, $\{21, 102, 114, 164\}$, $\{29, 128, 137, 183\}$, $\{22, 67, 79, 187\}$,\adfsplit
 $\{23, 48, 54, 89\}$, $\{135, 226, 233, 246\}$, $\{13, 153, 204, 235\}$, $\{74, 125, 179, 236\}$,\adfsplit
 $\{234, 44, 202, 115\}$, $\{60, 194, 211, 133\}$, $\{83, 207, 157, 37\}$, $\{207, 190, 212, 242\}$,\adfsplit
 $\{79, 123, 46, 139\}$, $\{12, 161, 28, 127\}$, $\{220, 200, 5, 47\}$, $\{178, 173, 63, 231\}$,\adfsplit
 $\{24, 194, 85, 202\}$, $\{196, 149, 49, 141\}$, $\{166, 237, 90, 161\}$, $\{126, 58, 69, 32\}$,\adfsplit
 $\{171, 201, 139, 189\}$, $\{237, 76, 182, 141\}$, $\{227, 51, 209, 198\}$, $\{122, 224, 99, 74\}$,\adfsplit
 $\{173, 88, 170, 117\}$, $\{125, 19, 187, 16\}$, $\{57, 159, 244, 51\}$, $\{209, 187, 98, 177\}$,\adfsplit
 $\{59, 87, 152, 9\}$, $\{74, 23, 220, 149\}$, $\{102, 205, 70, 43\}$, $\{230, 46, 151, 115\}$,\adfsplit
 $\{8, 40, 203, 207\}$, $\{213, 44, 98, 239\}$, $\{232, 163, 3, 97\}$, $\{40, 172, 98, 133\}$,\adfsplit
 $\{238, 85, 31, 152\}$, $\{77, 105, 30, 67\}$, $\{0, 112, 229, 84\}$, $\{52, 163, 230, 159\}$,\adfsplit
 $\{221, 154, 4, 132\}$, $\{92, 218, 98, 97\}$, $\{107, 138, 199, 197\}$, $\{104, 235, 178, 179\}$,\adfsplit
 $\{5, 25, 37, 213\}$, $\{126, 92, 80, 245\}$, $\{75, 28, 170, 80\}$, $\{72, 210, 207, 226\}$,\adfsplit
 $\{179, 100, 6, 240\}$, $\{82, 45, 55, 111\}$, $\{229, 104, 223, 105\}$, $\{108, 62, 67, 247\}$,\adfsplit
 $\{52, 214, 160, 228\}$, $\{31, 68, 1, 38\}$, $\{21, 246, 137, 111\}$, $\{26, 194, 206, 89\}$,\adfsplit
 $\{77, 58, 221, 62\}$, $\{216, 69, 102, 121\}$, $\{133, 182, 231, 117\}$, $\{88, 1, 198, 236\}$,\adfsplit
 $\{80, 37, 9, 62\}$, $\{156, 92, 31, 132\}$, $\{0, 4, 21, 189\}$, $\{0, 2, 173, 221\}$,\adfsplit
 $\{0, 7, 45, 69\}$, $\{0, 11, 13, 77\}$, $\{0, 15, 157, 245\}$, $\{0, 5, 18, 141\}$,\adfsplit
 $\{0, 22, 197, 235\}$, $\{1, 3, 57, 61\}$, $\{0, 10, 114, 213\}$, $\{0, 23, 174, 237\}$,\adfsplit
 $\{0, 36, 133, 203\}$, $\{1, 4, 53, 203\}$, $\{1, 9, 45, 66\}$, $\{1, 7, 27, 181\}$,\adfsplit
 $\{1, 12, 99, 245\}$, $\{1, 17, 59, 173\}$, $\{2, 7, 109, 235\}$, $\{1, 22, 107, 149\}$,\adfsplit
 $\{1, 26, 126, 205\}$, $\{1, 23, 69, 226\}$, $\{1, 44, 71, 85\}$, $\{1, 34, 117, 153\}$,\adfsplit
 $\{1, 35, 36, 165\}$, $\{2, 20, 45, 212\}$, $\{3, 11, 63, 229\}$, $\{3, 6, 37, 203\}$,\adfsplit
 $\{3, 20, 28, 141\}$, $\{3, 12, 94, 237\}$, $\{3, 14, 175, 245\}$, $\{4, 6, 94, 245\}$,\adfsplit
 $\{4, 5, 142, 223\}$, $\{5, 166, 167, 214\}$, $\{4, 13, 14, 239\}$, $\{0, 8, 43, 147\}$,\adfsplit
 $\{0, 16, 83, 107\}$, $\{0, 42, 115, 227\}$, $\{0, 38, 59, 99\}$, $\{1, 20, 139, 235\}$,\adfsplit
 $\{0, 24, 51, 236\}$, $\{1, 10, 76, 179\}$, $\{0, 28, 81, 211\}$, $\{1, 15, 115, 228\}$,\adfsplit
 $\{0, 57, 106, 219\}$, $\{1, 39, 123, 162\}$, $\{1, 25, 132, 171\}$, $\{1, 42, 63, 75\}$,\adfsplit
 $\{2, 11, 111, 210\}$, $\{2, 12, 103, 139\}$, $\{2, 4, 99, 226\}$, $\{2, 15, 36, 179\}$,\adfsplit
 $\{2, 38, 140, 147\}$, $\{2, 30, 83, 156\}$, $\{2, 39, 94, 243\}$, $\{2, 43, 98, 150\}$,\adfsplit
 $\{2, 44, 67, 110\}$, $\{3, 126, 134, 199\}$, $\{3, 92, 135, 159\}$, $\{3, 71, 119, 236\}$,\adfsplit
 $\{2, 175, 190, 203\}$, $\{0, 17, 100, 188\}$, $\{0, 25, 52, 164\}$, $\{0, 48, 124, 228\}$,\adfsplit
 $\{0, 92, 103, 172\}$, $\{0, 60, 156, 215\}$, $\{1, 150, 156, 204\}$, $\{0, 66, 121, 244\}$,\adfsplit
 $\{2, 46, 66, 236\}$, $\{1, 92, 95, 154\}$, $\{1, 62, 180, 212\}$, $\{1, 46, 161, 220\}$,\adfsplit
 $\{2, 87, 132, 246\}$, $\{2, 126, 204, 222\}$, $\{4, 30, 46, 175\}$, $\{0, 47, 130, 218\}$,\adfsplit
 $\{0, 50, 64, 226\}$, $\{1, 55, 98, 210\}$, $\{2, 62, 71, 166\}$, $\{1, 82, 129, 222\}$,\adfsplit
 $\{1, 18, 74, 177\}$, $\{1, 86, 207, 218\}$, $\{0, 111, 150, 242\}$, $\{0, 70, 142, 186\}$,\adfsplit
 $\{0, 30, 194, 209\}$, $\{0, 126, 166, 202\}$, $\{0, 190, 193, 233\}$, $\{0, 78, 169, 198\}$,\adfsplit
 $\{0, 40, 222, 246\}$, $\{0, 102, 158, 238\}$, $\{1, 49, 113, 190\}$, $\{0, 56, 118, 185\}$,\adfsplit
 $\{0, 49, 214, 239\}$, $\{0, 65, 145, 231\}$, $\{0, 39, 89, 96\}$, $\{0, 87, 151, 225\}$,\adfsplit
 $\{0, 73, 120, 217\}$, $\{0, 63, 79, 168\}$, $\{0, 31, 104, 137\}$, $\{0, 71, 207, 247\}$,\adfsplit
 $\{1, 127, 159, 247\}$, $\{1, 135, 231, 239\}$
}

\noindent under the action of the mapping $x \mapsto x + 8 \adfmod{248}$.
The deficiency graph is connected and has girth 6.

{\noindent\boldmath $\adfPENT(4, 85)$}~ With point set $Z_{260}$, the 5525 lines are generated from

{\scriptsize
 $\{22, 60, 200, 226\}$, $\{74, 167, 179, 182\}$, $\{36, 81, 189, 240\}$, $\{85, 97, 119, 147\}$,\adfsplit
 $\{194, 114, 231, 117\}$, $\{78, 254, 200, 91\}$, $\{61, 124, 120, 157\}$, $\{27, 46, 161, 157\}$,\adfsplit
 $\{180, 103, 109, 177\}$, $\{227, 248, 138, 174\}$, $\{122, 137, 176, 190\}$, $\{115, 204, 32, 235\}$,\adfsplit
 $\{7, 37, 93, 240\}$, $\{26, 187, 89, 130\}$, $\{188, 109, 12, 54\}$, $\{85, 77, 190, 196\}$,\adfsplit
 $\{214, 21, 159, 223\}$, $\{73, 90, 116, 244\}$, $\{74, 0, 232, 247\}$, $\{209, 244, 196, 131\}$,\adfsplit
 $\{144, 82, 183, 251\}$, $\{107, 5, 133, 233\}$, $\{209, 29, 116, 15\}$, $\{0, 1, 21, 169\}$,\adfsplit
 $\{0, 2, 9, 53\}$, $\{0, 3, 5, 237\}$, $\{0, 6, 25, 41\}$, $\{0, 7, 17, 253\}$,\adfsplit
 $\{0, 8, 69, 241\}$, $\{0, 10, 105, 249\}$, $\{0, 11, 77, 117\}$, $\{0, 12, 121, 205\}$,\adfsplit
 $\{0, 18, 49, 185\}$, $\{0, 19, 73, 133\}$, $\{0, 16, 81, 145\}$, $\{0, 23, 125, 229\}$,\adfsplit
 $\{0, 34, 177, 213\}$, $\{1, 2, 3, 141\}$, $\{1, 6, 49, 58\}$, $\{0, 38, 85, 137\}$,\adfsplit
 $\{1, 7, 11, 185\}$, $\{1, 14, 19, 189\}$, $\{0, 30, 51, 161\}$, $\{0, 29, 44, 87\}$,\adfsplit
 $\{0, 46, 58, 165\}$, $\{1, 30, 47, 70\}$, $\{1, 31, 38, 134\}$, $\{1, 26, 34, 126\}$,\adfsplit
 $\{1, 39, 50, 98\}$, $\{0, 67, 118, 157\}$, $\{1, 71, 102, 143\}$, $\{1, 43, 90, 238\}$,\adfsplit
 $\{1, 46, 62, 178\}$, $\{1, 75, 95, 227\}$, $\{1, 66, 138, 202\}$, $\{1, 110, 170, 219\}$,\adfsplit
 $\{1, 79, 111, 174\}$, $\{1, 119, 211, 250\}$, $\{1, 107, 186, 190\}$, $\{1, 158, 191, 243\}$,\adfsplit
 $\{1, 163, 199, 234\}$, $\{1, 215, 223, 239\}$, $\{0, 20, 86, 162\}$, $\{0, 24, 94, 182\}$,\adfsplit
 $\{0, 35, 146, 174\}$, $\{0, 32, 110, 154\}$, $\{0, 36, 214, 238\}$, $\{0, 63, 210, 242\}$,\adfsplit
 $\{0, 114, 134, 144\}$, $\{2, 27, 142, 187\}$, $\{0, 91, 175, 218\}$, $\{0, 59, 139, 246\}$,\adfsplit
 $\{0, 71, 190, 219\}$, $\{0, 98, 167, 223\}$, $\{0, 90, 92, 156\}$, $\{0, 103, 143, 170\}$,\adfsplit
 $\{0, 50, 127, 187\}$, $\{0, 62, 123, 192\}$, $\{0, 76, 207, 255\}$, $\{0, 55, 100, 211\}$,\adfsplit
 $\{0, 47, 72, 208\}$, $\{0, 40, 115, 148\}$, $\{0, 95, 96, 231\}$, $\{0, 79, 155, 251\}$,\adfsplit
 $\{0, 80, 199, 243\}$
}

\noindent under the action of the mapping $x \mapsto x + 4 \adfmod{260}$.
The deficiency graph is connected and has girth 6.

{\noindent\boldmath $\adfPENT(4, 93)$}~ With point set $Z_{284}$, the 6603 lines are generated from

{\scriptsize
 $\{92, 122, 192, 202\}$, $\{53, 86, 127, 233\}$, $\{84, 87, 164, 201\}$, $\{139, 151, 161, 202\}$,\adfsplit
 $\{80, 19, 99, 152\}$, $\{29, 198, 247, 8\}$, $\{137, 0, 159, 268\}$, $\{223, 262, 251, 197\}$,\adfsplit
 $\{265, 52, 158, 31\}$, $\{83, 237, 116, 41\}$, $\{131, 154, 281, 218\}$, $\{215, 253, 77, 158\}$,\adfsplit
 $\{232, 191, 126, 80\}$, $\{177, 175, 44, 230\}$, $\{84, 101, 25, 46\}$, $\{12, 111, 215, 73\}$,\adfsplit
 $\{117, 66, 57, 101\}$, $\{176, 244, 253, 56\}$, $\{0, 1, 2, 49\}$, $\{0, 4, 33, 57\}$,\adfsplit
 $\{0, 5, 7, 221\}$, $\{0, 6, 13, 261\}$, $\{0, 8, 93, 149\}$, $\{0, 11, 41, 229\}$,\adfsplit
 $\{0, 12, 81, 181\}$, $\{0, 14, 25, 185\}$, $\{0, 15, 65, 217\}$, $\{0, 18, 45, 157\}$,\adfsplit
 $\{0, 20, 125, 165\}$, $\{0, 22, 97, 177\}$, $\{0, 23, 101, 245\}$, $\{0, 24, 273, 277\}$,\adfsplit
 $\{0, 26, 113, 233\}$, $\{0, 27, 129, 161\}$, $\{0, 42, 193, 265\}$, $\{0, 35, 173, 201\}$,\adfsplit
 $\{1, 6, 7, 201\}$, $\{1, 9, 19, 273\}$, $\{0, 31, 205, 269\}$, $\{0, 54, 189, 281\}$,\adfsplit
 $\{0, 39, 109, 237\}$, $\{1, 14, 137, 154\}$, $\{0, 50, 73, 241\}$, $\{0, 43, 47, 89\}$,\adfsplit
 $\{0, 51, 67, 257\}$, $\{1, 26, 35, 163\}$, $\{1, 46, 50, 195\}$, $\{1, 59, 99, 271\}$,\adfsplit
 $\{1, 38, 74, 227\}$, $\{1, 98, 115, 259\}$, $\{1, 15, 103, 203\}$, $\{1, 102, 107, 267\}$,\adfsplit
 $\{1, 70, 110, 191\}$, $\{1, 123, 126, 142\}$, $\{1, 118, 174, 251\}$, $\{1, 155, 187, 202\}$,\adfsplit
 $\{1, 106, 166, 199\}$, $\{1, 122, 194, 214\}$, $\{1, 159, 218, 226\}$, $\{1, 175, 211, 246\}$,\adfsplit
 $\{1, 186, 223, 231\}$, $\{1, 167, 242, 266\}$, $\{1, 182, 270, 282\}$, $\{1, 206, 254, 279\}$,\adfsplit
 $\{0, 28, 83, 135\}$, $\{0, 32, 91, 211\}$, $\{0, 34, 87, 143\}$, $\{0, 36, 115, 283\}$,\adfsplit
 $\{0, 38, 155, 215\}$, $\{0, 40, 187, 235\}$, $\{0, 44, 119, 139\}$, $\{0, 56, 183, 227\}$,\adfsplit
 $\{0, 62, 191, 255\}$, $\{2, 23, 91, 115\}$, $\{0, 71, 98, 271\}$, $\{0, 94, 123, 275\}$,\adfsplit
 $\{2, 71, 154, 279\}$, $\{0, 103, 182, 279\}$, $\{0, 90, 158, 259\}$, $\{0, 138, 199, 242\}$,\adfsplit
 $\{0, 52, 156, 219\}$, $\{0, 58, 163, 194\}$, $\{0, 126, 254, 267\}$, $\{0, 60, 162, 176\}$,\adfsplit
 $\{0, 82, 144, 258\}$, $\{0, 64, 210, 238\}$, $\{0, 66, 88, 230\}$, $\{0, 48, 124, 274\}$,\adfsplit
 $\{0, 70, 148, 278\}$, $\{0, 78, 112, 266\}$, $\{0, 74, 118, 234\}$, $\{0, 84, 170, 282\}$,\adfsplit
 $\{0, 134, 166, 218\}$
}

\noindent under the action of the mapping $x \mapsto x + 4 \adfmod{284}$.
The deficiency graph is connected and has girth 6.

{\noindent\boldmath $\adfPENT(4, 97)$}~ With point set $Z_{296}$, the 7178 lines are generated from

{\scriptsize
 $\{116, 121, 147, 236\}$, $\{12, 29, 176, 178\}$, $\{47, 121, 159, 227\}$, $\{74, 143, 152, 230\}$,\adfsplit
 $\{64, 184, 206, 289\}$, $\{133, 173, 273, 286\}$, $\{21, 75, 100, 135\}$, $\{146, 163, 174, 258\}$,\adfsplit
 $\{162, 102, 103, 128\}$, $\{274, 95, 128, 21\}$, $\{52, 109, 234, 78\}$, $\{166, 154, 44, 274\}$,\adfsplit
 $\{270, 224, 208, 189\}$, $\{25, 14, 126, 226\}$, $\{9, 117, 2, 181\}$, $\{91, 228, 130, 107\}$,\adfsplit
 $\{0, 221, 207, 36\}$, $\{61, 130, 284, 49\}$, $\{59, 163, 29, 149\}$, $\{51, 121, 28, 160\}$,\adfsplit
 $\{13, 81, 179, 63\}$, $\{121, 231, 206, 72\}$, $\{53, 135, 201, 148\}$, $\{90, 15, 18, 38\}$,\adfsplit
 $\{263, 31, 293, 4\}$, $\{182, 286, 45, 9\}$, $\{190, 192, 213, 246\}$, $\{116, 190, 52, 63\}$,\adfsplit
 $\{140, 242, 238, 180\}$, $\{245, 95, 239, 235\}$, $\{265, 139, 294, 219\}$, $\{32, 125, 284, 33\}$,\adfsplit
 $\{41, 192, 75, 158\}$, $\{23, 137, 191, 220\}$, $\{176, 100, 156, 279\}$, $\{213, 169, 147, 3\}$,\adfsplit
 $\{72, 27, 21, 295\}$, $\{238, 24, 147, 34\}$, $\{104, 14, 100, 247\}$, $\{277, 180, 165, 245\}$,\adfsplit
 $\{67, 244, 62, 262\}$, $\{96, 100, 99, 226\}$, $\{30, 20, 273, 115\}$, $\{202, 211, 83, 54\}$,\adfsplit
 $\{94, 282, 227, 257\}$, $\{199, 26, 189, 58\}$, $\{66, 76, 221, 118\}$, $\{147, 285, 66, 82\}$,\adfsplit
 $\{278, 21, 88, 271\}$, $\{167, 121, 244, 221\}$, $\{177, 238, 165, 280\}$, $\{118, 2, 52, 183\}$,\adfsplit
 $\{235, 294, 47, 128\}$, $\{260, 202, 284, 0\}$, $\{156, 175, 139, 99\}$, $\{295, 105, 57, 214\}$,\adfsplit
 $\{245, 159, 247, 176\}$, $\{114, 185, 289, 156\}$, $\{101, 77, 198, 30\}$, $\{245, 127, 249, 187\}$,\adfsplit
 $\{35, 255, 44, 140\}$, $\{179, 136, 12, 79\}$, $\{28, 111, 25, 98\}$, $\{88, 5, 30, 292\}$,\adfsplit
 $\{266, 99, 219, 92\}$, $\{130, 63, 229, 31\}$, $\{52, 36, 187, 130\}$, $\{116, 198, 233, 55\}$,\adfsplit
 $\{67, 245, 25, 227\}$, $\{173, 75, 64, 8\}$, $\{152, 31, 65, 283\}$, $\{111, 33, 183, 16\}$,\adfsplit
 $\{142, 19, 70, 280\}$, $\{204, 15, 226, 98\}$, $\{23, 293, 2, 49\}$, $\{66, 199, 264, 218\}$,\adfsplit
 $\{21, 157, 79, 81\}$, $\{162, 189, 119, 255\}$, $\{232, 201, 270, 2\}$, $\{83, 254, 147, 160\}$,\adfsplit
 $\{244, 197, 64, 134\}$, $\{63, 99, 263, 75\}$, $\{119, 68, 288, 220\}$, $\{151, 152, 220, 78\}$,\adfsplit
 $\{0, 5, 191, 199\}$, $\{0, 9, 15, 135\}$, $\{0, 6, 63, 119\}$, $\{0, 7, 18, 23\}$,\adfsplit
 $\{0, 13, 31, 55\}$, $\{0, 19, 39, 87\}$, $\{0, 14, 47, 151\}$, $\{1, 2, 31, 247\}$,\adfsplit
 $\{1, 3, 95, 135\}$, $\{0, 20, 79, 100\}$, $\{0, 30, 92, 247\}$, $\{0, 50, 52, 255\}$,\adfsplit
 $\{1, 6, 60, 103\}$, $\{1, 5, 143, 148\}$, $\{1, 9, 23, 61\}$, $\{1, 10, 63, 172\}$,\adfsplit
 $\{0, 27, 111, 253\}$, $\{1, 11, 215, 259\}$, $\{1, 18, 284, 287\}$, $\{1, 22, 207, 254\}$,\adfsplit
 $\{1, 46, 126, 167\}$, $\{1, 20, 150, 159\}$, $\{1, 66, 155, 255\}$, $\{1, 34, 71, 195\}$,\adfsplit
 $\{1, 36, 38, 199\}$, $\{2, 3, 127, 222\}$, $\{2, 46, 63, 211\}$, $\{2, 6, 38, 263\}$,\adfsplit
 $\{2, 21, 111, 230\}$, $\{2, 123, 151, 277\}$, $\{2, 79, 134, 253\}$, $\{3, 15, 35, 77\}$,\adfsplit
 $\{2, 103, 148, 187\}$, $\{2, 5, 167, 219\}$, $\{4, 95, 110, 198\}$, $\{3, 55, 149, 206\}$,\adfsplit
 $\{3, 30, 190, 295\}$, $\{4, 79, 269, 286\}$, $\{0, 25, 102, 246\}$, $\{0, 45, 110, 230\}$,\adfsplit
 $\{0, 24, 150, 198\}$, $\{0, 12, 83, 182\}$, $\{1, 17, 182, 206\}$, $\{1, 68, 230, 246\}$,\adfsplit
 $\{1, 69, 198, 238\}$, $\{0, 28, 118, 140\}$, $\{0, 26, 278, 291\}$, $\{2, 20, 278, 286\}$,\adfsplit
 $\{1, 51, 291, 294\}$, $\{0, 51, 139, 286\}$, $\{1, 52, 270, 276\}$, $\{1, 59, 85, 94\}$,\adfsplit
 $\{2, 36, 147, 262\}$, $\{1, 92, 141, 142\}$, $\{2, 13, 174, 195\}$, $\{2, 10, 70, 107\}$,\adfsplit
 $\{2, 53, 75, 94\}$, $\{2, 26, 126, 258\}$, $\{3, 53, 205, 254\}$, $\{3, 36, 115, 182\}$,\adfsplit
 $\{4, 37, 109, 254\}$, $\{2, 78, 85, 194\}$, $\{3, 46, 237, 253\}$, $\{0, 37, 108, 212\}$,\adfsplit
 $\{0, 48, 196, 259\}$, $\{1, 19, 212, 260\}$, $\{0, 32, 91, 188\}$, $\{0, 29, 84, 244\}$,\adfsplit
 $\{1, 33, 132, 140\}$, $\{0, 60, 61, 268\}$, $\{0, 44, 85, 275\}$, $\{1, 25, 147, 220\}$,\adfsplit
 $\{1, 28, 75, 156\}$, $\{1, 115, 187, 228\}$, $\{2, 43, 213, 252\}$, $\{1, 76, 91, 234\}$,\adfsplit
 $\{2, 76, 90, 285\}$, $\{0, 75, 124, 293\}$, $\{2, 77, 180, 212\}$, $\{2, 93, 141, 260\}$,\adfsplit
 $\{2, 92, 98, 235\}$, $\{2, 61, 244, 269\}$, $\{1, 125, 162, 188\}$, $\{0, 33, 99, 195\}$,\adfsplit
 $\{0, 53, 112, 227\}$, $\{0, 138, 163, 171\}$, $\{0, 117, 155, 170\}$, $\{0, 101, 203, 205\}$,\adfsplit
 $\{1, 73, 253, 261\}$, $\{1, 146, 213, 269\}$, $\{0, 125, 235, 289\}$, $\{1, 21, 201, 283\}$,\adfsplit
 $\{0, 65, 194, 243\}$, $\{0, 57, 267, 274\}$, $\{0, 173, 234, 269\}$, $\{0, 141, 161, 273\}$,\adfsplit
 $\{0, 88, 178, 285\}$, $\{0, 77, 169, 216\}$, $\{0, 42, 201, 290\}$, $\{1, 57, 210, 233\}$,\adfsplit
 $\{0, 82, 185, 242\}$, $\{0, 74, 113, 154\}$, $\{1, 50, 106, 161\}$, $\{0, 73, 153, 281\}$,\adfsplit
 $\{0, 97, 210, 241\}$, $\{0, 89, 152, 258\}$, $\{0, 8, 122, 137\}$, $\{0, 41, 81, 160\}$,\adfsplit
 $\{0, 40, 104, 266\}$, $\{0, 58, 72, 168\}$
}

\noindent under the action of the mapping $x \mapsto x + 8 \adfmod{296}$.
The deficiency graph is connected and has girth 6.

{\noindent\boldmath $\adfPENT(4, 100)$}~ With point set $Z_{305}$, the 7625 lines are generated from

{\scriptsize
 $\{95, 96, 162, 210\}$, $\{62, 103, 143, 210\}$, $\{84, 145, 233, 246\}$, $\{77, 84, 166, 206\}$,\adfsplit
 $\{24, 227, 228, 289\}$, $\{123, 282, 130, 176\}$, $\{176, 6, 20, 26\}$, $\{180, 44, 47, 148\}$,\adfsplit
 $\{236, 224, 119, 39\}$, $\{101, 1, 99, 97\}$, $\{110, 157, 93, 150\}$, $\{293, 245, 281, 18\}$,\adfsplit
 $\{40, 31, 270, 227\}$, $\{228, 96, 155, 149\}$, $\{6, 10, 169, 300\}$, $\{128, 242, 156, 164\}$,\adfsplit
 $\{12, 90, 71, 106\}$, $\{58, 260, 27, 292\}$, $\{251, 50, 27, 15\}$, $\{34, 19, 149, 63\}$,\adfsplit
 $\{106, 132, 82, 188\}$, $\{218, 224, 91, 66\}$, $\{301, 51, 198, 199\}$, $\{89, 270, 291, 22\}$,\adfsplit
 $\{127, 253, 141, 7\}$, $\{119, 130, 106, 76\}$, $\{270, 157, 119, 25\}$, $\{257, 129, 115, 29\}$,\adfsplit
 $\{80, 131, 214, 46\}$, $\{169, 38, 31, 106\}$, $\{211, 169, 32, 62\}$, $\{200, 156, 115, 148\}$,\adfsplit
 $\{83, 212, 117, 128\}$, $\{154, 38, 58, 42\}$, $\{301, 214, 35, 224\}$, $\{79, 285, 8, 127\}$,\adfsplit
 $\{29, 164, 161, 245\}$, $\{157, 243, 122, 81\}$, $\{113, 175, 192, 54\}$, $\{302, 137, 281, 81\}$,\adfsplit
 $\{0, 2, 31, 176\}$, $\{0, 3, 81, 211\}$, $\{0, 4, 26, 216\}$, $\{0, 5, 61, 131\}$,\adfsplit
 $\{0, 8, 116, 226\}$, $\{0, 9, 71, 196\}$, $\{0, 10, 161, 241\}$, $\{0, 13, 76, 86\}$,\adfsplit
 $\{0, 18, 106, 166\}$, $\{0, 20, 141, 206\}$, $\{1, 2, 291, 297\}$, $\{1, 3, 256, 261\}$,\adfsplit
 $\{0, 19, 91, 256\}$, $\{1, 18, 186, 208\}$, $\{0, 22, 181, 276\}$, $\{1, 12, 24, 216\}$,\adfsplit
 $\{0, 24, 87, 136\}$, $\{0, 37, 64, 111\}$, $\{1, 19, 28, 74\}$, $\{0, 39, 146, 297\}$,\adfsplit
 $\{1, 32, 49, 167\}$, $\{0, 44, 117, 171\}$, $\{1, 29, 34, 192\}$, $\{1, 47, 68, 237\}$,\adfsplit
 $\{1, 33, 89, 262\}$, $\{1, 58, 63, 112\}$, $\{1, 38, 73, 249\}$, $\{1, 39, 72, 269\}$,\adfsplit
 $\{1, 78, 117, 283\}$, $\{1, 79, 93, 227\}$, $\{1, 59, 124, 223\}$, $\{1, 92, 129, 274\}$,\adfsplit
 $\{1, 118, 137, 242\}$, $\{1, 108, 123, 289\}$, $\{1, 154, 178, 288\}$, $\{1, 173, 183, 268\}$,\adfsplit
 $\{1, 122, 188, 263\}$, $\{1, 187, 207, 213\}$, $\{1, 168, 182, 279\}$, $\{1, 142, 193, 287\}$,\adfsplit
 $\{1, 202, 248, 272\}$, $\{1, 209, 239, 273\}$, $\{1, 254, 258, 299\}$, $\{1, 214, 267, 303\}$,\adfsplit
 $\{0, 23, 54, 167\}$, $\{0, 25, 52, 267\}$, $\{0, 29, 42, 97\}$, $\{0, 30, 112, 137\}$,\adfsplit
 $\{0, 34, 62, 212\}$, $\{0, 53, 122, 207\}$, $\{0, 55, 232, 277\}$, $\{2, 7, 67, 82\}$,\adfsplit
 $\{0, 50, 127, 302\}$, $\{0, 63, 92, 272\}$, $\{0, 79, 182, 292\}$, $\{0, 59, 102, 202\}$,\adfsplit
 $\{0, 68, 84, 217\}$, $\{0, 104, 197, 269\}$, $\{2, 54, 124, 198\}$, $\{0, 229, 247, 279\}$,\adfsplit
 $\{2, 89, 208, 299\}$, $\{2, 113, 249, 298\}$, $\{2, 133, 154, 263\}$, $\{2, 44, 63, 203\}$,\adfsplit
 $\{2, 59, 119, 209\}$, $\{0, 98, 204, 287\}$, $\{0, 148, 214, 237\}$, $\{2, 24, 93, 219\}$,\adfsplit
 $\{0, 69, 175, 289\}$, $\{0, 109, 144, 165\}$, $\{0, 74, 105, 254\}$, $\{0, 158, 209, 234\}$,\adfsplit
 $\{0, 45, 153, 304\}$, $\{0, 164, 218, 278\}$, $\{0, 168, 194, 233\}$, $\{0, 119, 160, 303\}$,\adfsplit
 $\{0, 49, 133, 215\}$, $\{0, 110, 228, 239\}$, $\{0, 93, 173, 185\}$, $\{0, 113, 163, 268\}$,\adfsplit
 $\{0, 43, 178, 203\}$, $\{0, 38, 128, 150\}$, $\{0, 65, 188, 235\}$, $\{0, 83, 138, 208\}$,\adfsplit
 $\{0, 58, 100, 180\}$
}

\noindent under the action of the mapping $x \mapsto x + 5 \adfmod{305}$.
The deficiency graph is connected and has girth 6.

{\noindent\boldmath $\adfPENT(4, 101)$}~ With point set $Z_{308}$, the 7777 lines are generated from

{\scriptsize
 $\{132, 176, 273, 298\}$, $\{17, 36, 94, 293\}$, $\{12, 179, 217, 251\}$, $\{62, 107, 134, 207\}$,\adfsplit
 $\{19, 3, 104, 4\}$, $\{111, 176, 165, 219\}$, $\{84, 147, 88, 236\}$, $\{184, 97, 237, 87\}$,\adfsplit
 $\{178, 108, 242, 202\}$, $\{118, 37, 268, 144\}$, $\{219, 292, 216, 152\}$, $\{54, 247, 280, 194\}$,\adfsplit
 $\{240, 89, 106, 112\}$, $\{282, 163, 102, 214\}$, $\{293, 44, 54, 74\}$, $\{213, 46, 3, 50\}$,\adfsplit
 $\{192, 17, 56, 172\}$, $\{249, 78, 286, 24\}$, $\{299, 216, 203, 114\}$, $\{251, 205, 29, 202\}$,\adfsplit
 $\{149, 122, 216, 182\}$, $\{268, 254, 291, 75\}$, $\{158, 235, 1, 101\}$, $\{133, 219, 194, 51\}$,\adfsplit
 $\{0, 60, 191, 189\}$, $\{122, 243, 12, 60\}$, $\{159, 81, 103, 237\}$, $\{138, 64, 176, 139\}$,\adfsplit
 $\{134, 267, 39, 69\}$, $\{130, 95, 243, 292\}$, $\{210, 0, 161, 147\}$, $\{74, 220, 231, 132\}$,\adfsplit
 $\{285, 20, 57, 201\}$, $\{87, 57, 286, 185\}$, $\{22, 133, 107, 10\}$, $\{158, 68, 14, 283\}$,\adfsplit
 $\{151, 171, 41, 109\}$, $\{214, 292, 111, 126\}$, $\{6, 181, 126, 147\}$, $\{96, 103, 247, 259\}$,\adfsplit
 $\{135, 181, 116, 237\}$, $\{0, 1, 91, 119\}$, $\{0, 2, 71, 103\}$, $\{0, 5, 155, 195\}$,\adfsplit
 $\{0, 6, 87, 123\}$, $\{0, 8, 39, 263\}$, $\{0, 9, 35, 79\}$, $\{0, 12, 279, 303\}$,\adfsplit
 $\{0, 13, 27, 199\}$, $\{0, 17, 55, 143\}$, $\{0, 18, 47, 179\}$, $\{0, 21, 159, 287\}$,\adfsplit
 $\{0, 25, 107, 227\}$, $\{0, 14, 187, 299\}$, $\{1, 2, 7, 59\}$, $\{1, 5, 107, 167\}$,\adfsplit
 $\{1, 6, 239, 247\}$, $\{0, 34, 139, 203\}$, $\{0, 22, 171, 175\}$, $\{0, 42, 135, 251\}$,\adfsplit
 $\{1, 9, 75, 259\}$, $\{1, 51, 54, 291\}$, $\{0, 41, 51, 283\}$, $\{1, 25, 147, 195\}$,\adfsplit
 $\{0, 86, 95, 102\}$, $\{1, 10, 274, 307\}$, $\{0, 66, 105, 247\}$, $\{1, 41, 155, 246\}$,\adfsplit
 $\{1, 14, 215, 237\}$, $\{1, 19, 74, 286\}$, $\{1, 42, 179, 185\}$, $\{1, 30, 95, 182\}$,\adfsplit
 $\{1, 202, 219, 238\}$, $\{1, 170, 183, 262\}$, $\{2, 30, 227, 258\}$, $\{0, 16, 130, 306\}$,\adfsplit
 $\{0, 26, 33, 150\}$, $\{0, 46, 182, 256\}$, $\{0, 29, 50, 154\}$, $\{0, 73, 170, 286\}$,\adfsplit
 $\{0, 38, 238, 268\}$, $\{0, 106, 117, 266\}$, $\{0, 118, 126, 202\}$, $\{0, 85, 198, 246\}$,\adfsplit
 $\{1, 29, 222, 278\}$, $\{0, 194, 226, 245\}$, $\{0, 49, 69, 258\}$, $\{1, 53, 117, 294\}$,\adfsplit
 $\{0, 57, 178, 261\}$, $\{0, 93, 138, 213\}$, $\{1, 49, 154, 197\}$, $\{0, 61, 190, 281\}$,\adfsplit
 $\{0, 24, 56, 242\}$, $\{0, 101, 177, 237\}$, $\{0, 28, 120, 165\}$, $\{0, 81, 173, 204\}$,\adfsplit
 $\{0, 84, 209, 253\}$, $\{0, 80, 193, 229\}$, $\{0, 197, 293, 305\}$, $\{0, 89, 96, 164\}$,\adfsplit
 $\{0, 36, 145, 236\}$
}

\noindent under the action of the mapping $x \mapsto x + 4 \adfmod{308}$.
The deficiency graph is connected and has girth 6.

{\noindent\boldmath $\adfPENT(4, 108)$}~ With point set $Z_{329}$, the 8883 lines are generated from

{\scriptsize
 $\{151, 152, 269, 302\}$, $\{28, 137, 170, 262\}$, $\{162, 201, 271, 283\}$, $\{51, 63, 71, 285\}$,\adfsplit
 $\{182, 197, 279, 321\}$, $\{52, 65, 135, 182\}$, $\{18, 60, 104, 237\}$, $\{4, 91, 293, 199\}$,\adfsplit
 $\{276, 200, 25, 207\}$, $\{18, 185, 112, 274\}$, $\{319, 129, 219, 275\}$, $\{238, 235, 259, 140\}$,\adfsplit
 $\{250, 254, 62, 278\}$, $\{187, 238, 278, 193\}$, $\{259, 174, 112, 279\}$, $\{103, 121, 235, 274\}$,\adfsplit
 $\{40, 85, 162, 241\}$, $\{115, 252, 79, 96\}$, $\{56, 282, 172, 236\}$, $\{211, 228, 215, 156\}$,\adfsplit
 $\{254, 256, 40, 286\}$, $\{168, 233, 98, 59\}$, $\{108, 235, 2, 301\}$, $\{285, 34, 44, 128\}$,\adfsplit
 $\{94, 194, 158, 5\}$, $\{217, 203, 295, 194\}$, $\{77, 34, 1, 272\}$, $\{282, 10, 150, 313\}$,\adfsplit
 $\{232, 138, 69, 248\}$, $\{79, 142, 290, 280\}$, $\{251, 222, 288, 148\}$, $\{327, 248, 307, 39\}$,\adfsplit
 $\{27, 255, 2, 97\}$, $\{134, 318, 305, 124\}$, $\{231, 61, 257, 205\}$, $\{255, 296, 110, 171\}$,\adfsplit
 $\{175, 320, 155, 34\}$, $\{277, 71, 94, 261\}$, $\{52, 129, 71, 9\}$, $\{241, 22, 71, 115\}$,\adfsplit
 $\{39, 147, 35, 318\}$, $\{117, 152, 184, 99\}$, $\{165, 326, 180, 277\}$, $\{241, 306, 226, 295\}$,\adfsplit
 $\{166, 252, 270, 43\}$, $\{23, 21, 72, 27\}$, $\{104, 316, 78, 73\}$, $\{285, 178, 21, 67\}$,\adfsplit
 $\{21, 310, 125, 174\}$, $\{251, 66, 90, 92\}$, $\{144, 297, 89, 173\}$, $\{143, 251, 34, 158\}$,\adfsplit
 $\{288, 316, 169, 318\}$, $\{255, 155, 151, 59\}$, $\{274, 156, 58, 249\}$, $\{131, 257, 299, 245\}$,\adfsplit
 $\{279, 141, 172, 81\}$, $\{175, 122, 126, 174\}$, $\{2, 219, 252, 320\}$, $\{136, 199, 143, 172\}$,\adfsplit
 $\{203, 132, 247, 125\}$, $\{278, 49, 207, 228\}$, $\{239, 304, 167, 153\}$, $\{80, 191, 268, 127\}$,\adfsplit
 $\{306, 82, 76, 279\}$, $\{239, 24, 215, 96\}$, $\{130, 77, 290, 220\}$, $\{94, 116, 128, 69\}$,\adfsplit
 $\{266, 156, 4, 264\}$, $\{201, 125, 309, 324\}$, $\{328, 12, 181, 212\}$, $\{133, 116, 83, 22\}$,\adfsplit
 $\{214, 166, 223, 91\}$, $\{209, 134, 20, 128\}$, $\{285, 74, 93, 36\}$, $\{253, 46, 285, 299\}$,\adfsplit
 $\{105, 37, 195, 218\}$, $\{123, 100, 232, 91\}$, $\{65, 38, 176, 83\}$, $\{294, 103, 237, 265\}$,\adfsplit
 $\{35, 295, 316, 164\}$, $\{115, 196, 121, 181\}$, $\{171, 84, 40, 270\}$, $\{221, 79, 164, 176\}$,\adfsplit
 $\{0, 3, 5, 101\}$, $\{0, 1, 38, 194\}$, $\{0, 11, 31, 255\}$, $\{0, 13, 24, 178\}$,\adfsplit
 $\{0, 16, 17, 45\}$, $\{0, 19, 297, 318\}$, $\{0, 22, 150, 164\}$, $\{0, 27, 115, 227\}$,\adfsplit
 $\{0, 25, 80, 241\}$, $\{0, 37, 59, 206\}$, $\{0, 33, 66, 185\}$, $\{1, 2, 136, 227\}$,\adfsplit
 $\{0, 34, 52, 94\}$, $\{1, 6, 73, 108\}$, $\{0, 55, 262, 311\}$, $\{0, 39, 47, 122\}$,\adfsplit
 $\{1, 11, 223, 269\}$, $\{1, 20, 41, 213\}$, $\{1, 13, 255, 265\}$, $\{0, 61, 181, 304\}$,\adfsplit
 $\{1, 4, 131, 290\}$, $\{1, 52, 53, 69\}$, $\{1, 37, 164, 173\}$, $\{1, 26, 206, 243\}$,\adfsplit
 $\{1, 25, 145, 248\}$, $\{2, 10, 226, 265\}$, $\{1, 10, 30, 163\}$, $\{2, 12, 13, 66\}$,\adfsplit
 $\{2, 5, 153, 192\}$, $\{2, 48, 166, 234\}$, $\{2, 9, 76, 80\}$, $\{2, 11, 52, 68\}$,\adfsplit
 $\{2, 55, 269, 292\}$, $\{2, 40, 276, 314\}$, $\{2, 130, 256, 262\}$, $\{3, 61, 117, 292\}$,\adfsplit
 $\{3, 6, 11, 251\}$, $\{2, 53, 101, 242\}$, $\{3, 151, 235, 300\}$, $\{3, 158, 216, 327\}$,\adfsplit
 $\{3, 47, 83, 237\}$, $\{3, 202, 270, 298\}$, $\{0, 7, 131, 208\}$, $\{0, 23, 236, 299\}$,\adfsplit
 $\{0, 56, 166, 313\}$, $\{0, 29, 83, 117\}$, $\{0, 36, 96, 103\}$, $\{0, 41, 89, 160\}$,\adfsplit
 $\{0, 69, 189, 271\}$, $\{1, 8, 181, 278\}$, $\{1, 9, 82, 103\}$, $\{1, 48, 51, 201\}$,\adfsplit
 $\{1, 88, 202, 292\}$, $\{0, 165, 187, 293\}$, $\{2, 32, 173, 258\}$, $\{2, 137, 180, 229\}$,\adfsplit
 $\{1, 138, 153, 188\}$, $\{1, 67, 313, 321\}$, $\{2, 47, 109, 111\}$, $\{0, 146, 172, 250\}$,\adfsplit
 $\{2, 37, 257, 279\}$, $\{1, 111, 152, 155\}$, $\{1, 109, 165, 299\}$, $\{0, 118, 137, 292\}$,\adfsplit
 $\{1, 117, 144, 300\}$, $\{1, 96, 121, 319\}$, $\{1, 195, 257, 277\}$, $\{2, 89, 123, 293\}$,\adfsplit
 $\{2, 88, 128, 250\}$, $\{0, 91, 265, 321\}$, $\{0, 74, 144, 216\}$, $\{0, 58, 228, 307\}$,\adfsplit
 $\{1, 128, 251, 314\}$, $\{0, 60, 111, 240\}$, $\{0, 76, 177, 214\}$, $\{1, 43, 132, 289\}$,\adfsplit
 $\{0, 125, 176, 291\}$, $\{2, 60, 181, 326\}$, $\{2, 67, 167, 191\}$, $\{0, 121, 200, 223\}$,\adfsplit
 $\{1, 60, 97, 197\}$, $\{0, 102, 149, 237\}$, $\{1, 102, 298, 312\}$, $\{1, 36, 186, 284\}$,\adfsplit
 $\{1, 130, 235, 296\}$, $\{2, 16, 228, 263\}$, $\{0, 50, 130, 207\}$, $\{0, 43, 57, 298\}$,\adfsplit
 $\{0, 42, 126, 319\}$, $\{0, 81, 107, 197\}$, $\{0, 88, 155, 168\}$, $\{0, 72, 256, 310\}$,\adfsplit
 $\{0, 71, 114, 270\}$, $\{0, 86, 205, 247\}$, $\{0, 163, 184, 211\}$, $\{1, 72, 226, 254\}$,\adfsplit
 $\{0, 64, 148, 170\}$, $\{0, 85, 175, 268\}$, $\{1, 57, 135, 260\}$, $\{0, 100, 105, 274\}$,\adfsplit
 $\{0, 134, 233, 246\}$, $\{0, 77, 183, 275\}$, $\{0, 99, 133, 323\}$, $\{0, 127, 225, 288\}$,\adfsplit
 $\{0, 28, 232, 294\}$
}

\noindent under the action of the mapping $x \mapsto x + 7 \adfmod{329}$.
The deficiency graph is connected and has girth 5.

{\noindent\boldmath $\adfPENT(4, 109)$}~ With point set $Z_{332}$, the 9047 lines are generated from

{\scriptsize
 $\{64, 239, 268, 317\}$, $\{16, 53, 182, 281\}$, $\{98, 153, 155, 238\}$, $\{87, 96, 182, 251\}$,\adfsplit
 $\{187, 179, 207, 170\}$, $\{149, 255, 291, 308\}$, $\{63, 173, 202, 201\}$, $\{265, 70, 61, 42\}$,\adfsplit
 $\{240, 17, 4, 192\}$, $\{153, 58, 218, 145\}$, $\{78, 309, 76, 228\}$, $\{257, 164, 25, 290\}$,\adfsplit
 $\{174, 296, 199, 181\}$, $\{64, 13, 162, 183\}$, $\{200, 244, 231, 155\}$, $\{23, 210, 111, 20\}$,\adfsplit
 $\{149, 69, 209, 296\}$, $\{164, 145, 139, 2\}$, $\{268, 11, 82, 38\}$, $\{136, 249, 141, 126\}$,\adfsplit
 $\{171, 93, 45, 181\}$, $\{111, 84, 172, 116\}$, $\{71, 290, 302, 24\}$, $\{51, 13, 102, 178\}$,\adfsplit
 $\{87, 160, 188, 203\}$, $\{125, 179, 0, 90\}$, $\{152, 58, 255, 214\}$, $\{137, 201, 174, 161\}$,\adfsplit
 $\{133, 74, 155, 58\}$, $\{85, 223, 207, 6\}$, $\{165, 50, 241, 275\}$, $\{138, 135, 63, 327\}$,\adfsplit
 $\{79, 276, 93, 39\}$, $\{279, 5, 280, 60\}$, $\{30, 159, 241, 95\}$, $\{179, 40, 46, 120\}$,\adfsplit
 $\{1, 321, 108, 84\}$, $\{316, 73, 0, 184\}$, $\{213, 13, 243, 197\}$, $\{0, 1, 8, 85\}$,\adfsplit
 $\{0, 4, 21, 141\}$, $\{0, 7, 9, 181\}$, $\{0, 10, 117, 121\}$, $\{0, 11, 29, 209\}$,\adfsplit
 $\{0, 12, 45, 309\}$, $\{0, 14, 25, 301\}$, $\{0, 18, 41, 205\}$, $\{0, 19, 61, 133\}$,\adfsplit
 $\{0, 20, 189, 305\}$, $\{0, 22, 177, 197\}$, $\{0, 23, 53, 261\}$, $\{0, 39, 101, 145\}$,\adfsplit
 $\{0, 26, 69, 289\}$, $\{0, 30, 161, 257\}$, $\{0, 35, 201, 293\}$, $\{0, 34, 65, 97\}$,\adfsplit
 $\{0, 38, 165, 321\}$, $\{0, 55, 129, 273\}$, $\{1, 6, 7, 297\}$, $\{0, 63, 153, 251\}$,\adfsplit
 $\{0, 51, 58, 229\}$, $\{0, 54, 115, 329\}$, $\{0, 71, 123, 241\}$, $\{0, 67, 147, 269\}$,\adfsplit
 $\{1, 22, 26, 267\}$, $\{1, 15, 27, 151\}$, $\{1, 11, 54, 59\}$, $\{1, 51, 62, 235\}$,\adfsplit
 $\{1, 18, 63, 287\}$, $\{1, 71, 118, 191\}$, $\{1, 67, 82, 199\}$, $\{1, 98, 134, 283\}$,\adfsplit
 $\{1, 83, 183, 311\}$, $\{1, 103, 126, 214\}$, $\{1, 175, 179, 330\}$, $\{1, 147, 203, 299\}$,\adfsplit
 $\{1, 87, 154, 247\}$, $\{1, 114, 186, 239\}$, $\{1, 263, 282, 295\}$, $\{1, 91, 194, 262\}$,\adfsplit
 $\{1, 174, 207, 286\}$, $\{1, 170, 230, 250\}$, $\{1, 78, 242, 294\}$, $\{1, 94, 198, 307\}$,\adfsplit
 $\{0, 36, 143, 299\}$, $\{0, 40, 151, 211\}$, $\{0, 50, 79, 191\}$, $\{0, 52, 183, 275\}$,\adfsplit
 $\{0, 104, 267, 291\}$, $\{0, 76, 159, 203\}$, $\{0, 42, 120, 207\}$, $\{0, 68, 138, 295\}$,\adfsplit
 $\{0, 110, 195, 216\}$, $\{0, 215, 250, 274\}$, $\{0, 178, 255, 310\}$, $\{0, 142, 198, 247\}$,\adfsplit
 $\{0, 99, 130, 232\}$, $\{0, 114, 162, 283\}$, $\{0, 167, 206, 290\}$, $\{0, 134, 172, 330\}$,\adfsplit
 $\{0, 94, 218, 282\}$, $\{0, 60, 214, 306\}$, $\{0, 66, 176, 298\}$, $\{0, 78, 262, 302\}$,\adfsplit
 $\{0, 46, 108, 192\}$, $\{0, 82, 234, 240\}$, $\{0, 74, 194, 202\}$, $\{0, 118, 150, 164\}$,\adfsplit
 $\{0, 106, 124, 196\}$
}

\noindent under the action of the mapping $x \mapsto x + 4 \adfmod{332}$.
The deficiency graph is connected and has girth 7.

{\noindent\boldmath $\adfPENT(4, 117)$}~ With point set $Z_{356}$, the 10413 lines are generated from

{\scriptsize
 $\{100, 256, 285, 335\}$, $\{30, 72, 110, 139\}$, $\{114, 246, 249, 329\}$, $\{24, 43, 221, 319\}$,\adfsplit
 $\{176, 167, 119, 250\}$, $\{163, 287, 347, 329\}$, $\{130, 175, 260, 205\}$, $\{79, 94, 120, 301\}$,\adfsplit
 $\{329, 328, 217, 131\}$, $\{61, 1, 197, 76\}$, $\{62, 1, 322, 143\}$, $\{238, 121, 268, 187\}$,\adfsplit
 $\{245, 19, 8, 73\}$, $\{125, 166, 277, 251\}$, $\{322, 4, 6, 111\}$, $\{296, 232, 101, 344\}$,\adfsplit
 $\{101, 39, 111, 40\}$, $\{332, 169, 121, 2\}$, $\{69, 222, 319, 176\}$, $\{24, 140, 2, 265\}$,\adfsplit
 $\{234, 91, 26, 104\}$, $\{193, 155, 14, 47\}$, $\{117, 218, 187, 66\}$, $\{220, 62, 276, 103\}$,\adfsplit
 $\{175, 298, 182, 117\}$, $\{74, 122, 256, 3\}$, $\{294, 305, 224, 307\}$, $\{222, 175, 253, 197\}$,\adfsplit
 $\{63, 275, 147, 224\}$, $\{206, 252, 312, 88\}$, $\{125, 165, 129, 61\}$, $\{20, 47, 209, 50\}$,\adfsplit
 $\{51, 188, 183, 9\}$, $\{149, 311, 313, 198\}$, $\{332, 122, 53, 28\}$, $\{145, 63, 43, 253\}$,\adfsplit
 $\{177, 333, 1, 22\}$, $\{91, 327, 65, 318\}$, $\{273, 103, 202, 324\}$, $\{126, 67, 202, 193\}$,\adfsplit
 $\{54, 91, 120, 244\}$, $\{14, 138, 291, 139\}$, $\{268, 262, 258, 70\}$, $\{351, 290, 99, 263\}$,\adfsplit
 $\{259, 33, 282, 177\}$, $\{14, 42, 331, 30\}$, $\{309, 53, 322, 339\}$, $\{32, 53, 351, 129\}$,\adfsplit
 $\{58, 63, 218, 48\}$, $\{110, 44, 246, 273\}$, $\{110, 37, 162, 18\}$, $\{117, 72, 355, 239\}$,\adfsplit
 $\{0, 3, 9, 37\}$, $\{0, 4, 93, 105\}$, $\{0, 5, 13, 133\}$, $\{0, 6, 49, 137\}$,\adfsplit
 $\{0, 7, 17, 157\}$, $\{0, 8, 41, 273\}$, $\{0, 12, 85, 129\}$, $\{0, 14, 69, 201\}$,\adfsplit
 $\{0, 16, 221, 337\}$, $\{0, 18, 109, 317\}$, $\{0, 23, 165, 333\}$, $\{0, 20, 277, 349\}$,\adfsplit
 $\{0, 22, 149, 169\}$, $\{0, 31, 217, 269\}$, $\{0, 42, 57, 141\}$, $\{0, 28, 261, 353\}$,\adfsplit
 $\{0, 43, 313, 345\}$, $\{0, 55, 213, 309\}$, $\{0, 34, 253, 312\}$, $\{0, 47, 230, 289\}$,\adfsplit
 $\{0, 35, 126, 173\}$, $\{0, 39, 53, 134\}$, $\{1, 2, 23, 341\}$, $\{0, 58, 153, 227\}$,\adfsplit
 $\{0, 87, 98, 293\}$, $\{1, 6, 38, 294\}$, $\{1, 15, 78, 186\}$, $\{1, 34, 90, 318\}$,\adfsplit
 $\{1, 79, 98, 278\}$, $\{1, 35, 111, 334\}$, $\{1, 86, 122, 206\}$, $\{1, 7, 146, 166\}$,\adfsplit
 $\{1, 54, 114, 158\}$, $\{1, 234, 259, 311\}$, $\{1, 63, 174, 243\}$, $\{1, 107, 214, 307\}$,\adfsplit
 $\{1, 91, 183, 218\}$, $\{1, 155, 267, 291\}$, $\{1, 130, 203, 247\}$, $\{1, 134, 283, 339\}$,\adfsplit
 $\{1, 95, 235, 350\}$, $\{0, 24, 86, 175\}$, $\{0, 32, 114, 286\}$, $\{0, 36, 274, 282\}$,\adfsplit
 $\{0, 75, 270, 294\}$, $\{0, 80, 258, 322\}$, $\{0, 106, 194, 248\}$, $\{0, 154, 168, 354\}$,\adfsplit
 $\{0, 90, 139, 264\}$, $\{0, 50, 190, 243\}$, $\{0, 110, 128, 291\}$, $\{2, 255, 271, 283\}$,\adfsplit
 $\{0, 131, 148, 298\}$, $\{0, 54, 204, 272\}$, $\{0, 102, 196, 311\}$, $\{0, 88, 210, 260\}$,\adfsplit
 $\{0, 104, 223, 266\}$, $\{0, 123, 176, 331\}$, $\{0, 76, 171, 220\}$, $\{0, 111, 147, 216\}$,\adfsplit
 $\{0, 59, 91, 236\}$, $\{0, 40, 247, 255\}$, $\{0, 99, 199, 267\}$, $\{0, 72, 259, 263\}$,\adfsplit
 $\{0, 63, 127, 323\}$
}

\noindent under the action of the mapping $x \mapsto x + 4 \adfmod{356}$.
The deficiency graph is connected and has girth 6.

{\noindent\boldmath $\adfPENT(4, 120)$}~ With point set $Z_{365}$, the 10950 lines are generated from

{\scriptsize
 $\{82, 124, 276, 348\}$, $\{90, 144, 208, 263\}$, $\{12, 209, 285, 357\}$, $\{20, 106, 161, 179\}$,\adfsplit
 $\{162, 193, 226, 245\}$, $\{280, 135, 47, 114\}$, $\{328, 301, 174, 193\}$, $\{352, 281, 26, 17\}$,\adfsplit
 $\{341, 206, 361, 94\}$, $\{76, 199, 109, 54\}$, $\{166, 77, 120, 0\}$, $\{194, 286, 149, 202\}$,\adfsplit
 $\{259, 57, 71, 239\}$, $\{312, 45, 201, 247\}$, $\{128, 238, 138, 276\}$, $\{51, 104, 354, 323\}$,\adfsplit
 $\{284, 234, 22, 51\}$, $\{56, 312, 305, 248\}$, $\{252, 135, 66, 49\}$, $\{58, 187, 183, 243\}$,\adfsplit
 $\{55, 230, 18, 338\}$, $\{3, 328, 346, 168\}$, $\{117, 133, 4, 294\}$, $\{271, 55, 237, 260\}$,\adfsplit
 $\{142, 269, 273, 348\}$, $\{356, 48, 118, 19\}$, $\{332, 269, 317, 230\}$, $\{130, 188, 297, 117\}$,\adfsplit
 $\{19, 165, 359, 61\}$, $\{196, 312, 47, 272\}$, $\{137, 243, 33, 230\}$, $\{267, 98, 325, 147\}$,\adfsplit
 $\{151, 153, 114, 183\}$, $\{339, 305, 146, 94\}$, $\{100, 194, 301, 278\}$, $\{239, 178, 357, 231\}$,\adfsplit
 $\{113, 197, 296, 37\}$, $\{294, 6, 306, 330\}$, $\{134, 349, 35, 216\}$, $\{66, 165, 179, 31\}$,\adfsplit
 $\{351, 88, 201, 50\}$, $\{238, 50, 164, 350\}$, $\{269, 26, 148, 66\}$, $\{196, 100, 222, 363\}$,\adfsplit
 $\{29, 210, 36, 258\}$, $\{43, 322, 252, 171\}$, $\{164, 177, 174, 343\}$, $\{251, 32, 18, 23\}$,\adfsplit
 $\{214, 303, 314, 54\}$, $\{225, 254, 144, 15\}$, $\{178, 181, 175, 177\}$, $\{188, 264, 318, 40\}$,\adfsplit
 $\{111, 159, 50, 80\}$, $\{125, 256, 246, 259\}$, $\{71, 246, 251, 201\}$, $\{283, 10, 364, 349\}$,\adfsplit
 $\{224, 302, 60, 141\}$, $\{253, 227, 304, 12\}$, $\{186, 46, 364, 130\}$, $\{299, 77, 192, 228\}$,\adfsplit
 $\{109, 266, 287, 13\}$, $\{138, 224, 265, 150\}$, $\{304, 209, 43, 9\}$, $\{238, 127, 315, 171\}$,\adfsplit
 $\{102, 177, 127, 58\}$, $\{291, 328, 167, 208\}$, $\{181, 284, 300, 182\}$, $\{172, 64, 259, 104\}$,\adfsplit
 $\{329, 362, 66, 78\}$, $\{305, 314, 157, 291\}$, $\{0, 1, 59, 139\}$, $\{0, 4, 304, 334\}$,\adfsplit
 $\{0, 5, 89, 149\}$, $\{0, 8, 24, 264\}$, $\{0, 12, 19, 254\}$, $\{0, 15, 119, 204\}$,\adfsplit
 $\{0, 17, 69, 244\}$, $\{0, 16, 169, 309\}$, $\{0, 22, 44, 49\}$, $\{0, 18, 64, 249\}$,\adfsplit
 $\{0, 23, 229, 364\}$, $\{0, 25, 171, 299\}$, $\{0, 37, 227, 319\}$, $\{0, 10, 269, 327\}$,\adfsplit
 $\{0, 27, 174, 298\}$, $\{0, 42, 197, 209\}$, $\{0, 26, 262, 359\}$, $\{0, 62, 172, 294\}$,\adfsplit
 $\{1, 7, 12, 44\}$, $\{0, 63, 152, 214\}$, $\{1, 8, 164, 182\}$, $\{1, 16, 57, 174\}$,\adfsplit
 $\{1, 17, 63, 89\}$, $\{1, 26, 143, 364\}$, $\{1, 18, 39, 336\}$, $\{1, 32, 224, 317\}$,\adfsplit
 $\{1, 53, 197, 279\}$, $\{1, 52, 109, 276\}$, $\{1, 78, 137, 249\}$, $\{1, 64, 87, 108\}$,\adfsplit
 $\{1, 93, 198, 309\}$, $\{1, 97, 268, 269\}$, $\{2, 4, 63, 178\}$, $\{1, 79, 122, 338\}$,\adfsplit
 $\{1, 67, 126, 219\}$, $\{2, 8, 279, 307\}$, $\{2, 13, 92, 139\}$, $\{2, 98, 134, 183\}$,\adfsplit
 $\{2, 138, 233, 239\}$, $\{2, 103, 144, 242\}$, $\{2, 19, 293, 328\}$, $\{3, 53, 184, 193\}$,\adfsplit
 $\{2, 128, 329, 343\}$, $\{0, 20, 162, 332\}$, $\{0, 32, 35, 292\}$, $\{0, 45, 157, 192\}$,\adfsplit
 $\{0, 50, 302, 347\}$, $\{0, 21, 57, 222\}$, $\{0, 51, 207, 337\}$, $\{0, 47, 305, 357\}$,\adfsplit
 $\{0, 36, 287, 338\}$, $\{0, 78, 232, 286\}$, $\{1, 71, 247, 342\}$, $\{0, 66, 127, 271\}$,\adfsplit
 $\{2, 87, 203, 222\}$, $\{0, 67, 241, 295\}$, $\{1, 121, 293, 347\}$, $\{1, 86, 201, 292\}$,\adfsplit
 $\{0, 71, 237, 358\}$, $\{0, 183, 212, 316\}$, $\{1, 98, 178, 212\}$, $\{0, 97, 243, 331\}$,\adfsplit
 $\{0, 53, 176, 256\}$, $\{1, 76, 171, 323\}$, $\{0, 68, 76, 136\}$, $\{0, 91, 130, 356\}$,\adfsplit
 $\{0, 161, 248, 306\}$, $\{0, 40, 203, 251\}$, $\{0, 186, 291, 333\}$, $\{0, 85, 195, 321\}$,\adfsplit
 $\{0, 98, 111, 140\}$, $\{0, 43, 90, 196\}$, $\{0, 101, 105, 303\}$, $\{0, 83, 103, 240\}$,\adfsplit
 $\{0, 28, 93, 200\}$, $\{0, 75, 218, 233\}$, $\{0, 123, 268, 293\}$, $\{0, 88, 230, 363\}$,\adfsplit
 $\{0, 33, 185, 285\}$, $\{0, 55, 128, 150\}$
}

\noindent under the action of the mapping $x \mapsto x + 5 \adfmod{365}$.
The deficiency graph is connected and has girth 6.

{\noindent\boldmath $\adfPENT(4, 125)$}~ With point set $Z_{380}$, the 11875 lines are generated from

{\scriptsize
 $\{33, 230, 362, 373\}$, $\{8, 111, 226, 348\}$, $\{20, 91, 152, 157\}$, $\{175, 211, 273, 294\}$,\adfsplit
 $\{223, 70, 219, 226\}$, $\{54, 124, 355, 341\}$, $\{196, 7, 344, 358\}$, $\{166, 81, 257, 142\}$,\adfsplit
 $\{5, 108, 218, 308\}$, $\{255, 352, 292, 156\}$, $\{238, 365, 281, 146\}$, $\{330, 74, 86, 325\}$,\adfsplit
 $\{231, 67, 156, 181\}$, $\{236, 123, 293, 183\}$, $\{114, 131, 12, 10\}$, $\{18, 235, 347, 357\}$,\adfsplit
 $\{366, 80, 200, 154\}$, $\{193, 350, 8, 0\}$, $\{255, 61, 86, 299\}$, $\{127, 109, 274, 82\}$,\adfsplit
 $\{202, 167, 112, 346\}$, $\{52, 276, 160, 31\}$, $\{91, 117, 169, 154\}$, $\{94, 235, 28, 366\}$,\adfsplit
 $\{266, 198, 17, 276\}$, $\{61, 114, 377, 287\}$, $\{233, 183, 225, 20\}$, $\{371, 306, 123, 372\}$,\adfsplit
 $\{207, 5, 52, 33\}$, $\{198, 114, 194, 43\}$, $\{127, 95, 0, 147\}$, $\{138, 141, 310, 17\}$,\adfsplit
 $\{320, 134, 208, 115\}$, $\{335, 245, 324, 319\}$, $\{294, 319, 149, 366\}$, $\{271, 269, 284, 335\}$,\adfsplit
 $\{200, 138, 62, 159\}$, $\{17, 237, 263, 363\}$, $\{373, 237, 291, 85\}$, $\{124, 292, 29, 23\}$,\adfsplit
 $\{343, 336, 313, 195\}$, $\{173, 236, 15, 159\}$, $\{20, 166, 114, 146\}$, $\{69, 272, 281, 100\}$,\adfsplit
 $\{10, 181, 312, 111\}$, $\{38, 74, 280, 204\}$, $\{302, 40, 28, 375\}$, $\{186, 265, 160, 299\}$,\adfsplit
 $\{255, 68, 150, 279\}$, $\{360, 134, 1, 234\}$, $\{56, 345, 349, 289\}$, $\{148, 97, 338, 154\}$,\adfsplit
 $\{353, 373, 351, 95\}$, $\{156, 296, 314, 17\}$, $\{201, 75, 338, 147\}$, $\{0, 1, 34, 150\}$,\adfsplit
 $\{0, 2, 3, 62\}$, $\{0, 4, 182, 226\}$, $\{0, 10, 38, 278\}$, $\{0, 13, 282, 322\}$,\adfsplit
 $\{0, 15, 30, 210\}$, $\{0, 16, 114, 202\}$, $\{0, 17, 130, 294\}$, $\{0, 19, 86, 238\}$,\adfsplit
 $\{0, 22, 142, 352\}$, $\{0, 23, 270, 326\}$, $\{0, 20, 366, 374\}$, $\{0, 27, 70, 118\}$,\adfsplit
 $\{0, 31, 106, 358\}$, $\{0, 35, 58, 122\}$, $\{1, 7, 46, 194\}$, $\{1, 2, 18, 369\}$,\adfsplit
 $\{0, 42, 47, 246\}$, $\{0, 37, 46, 266\}$, $\{0, 29, 134, 169\}$, $\{1, 11, 66, 350\}$,\adfsplit
 $\{0, 32, 330, 363\}$, $\{0, 54, 63, 263\}$, $\{0, 39, 87, 198\}$, $\{0, 79, 206, 299\}$,\adfsplit
 $\{1, 17, 98, 165\}$, $\{1, 23, 126, 163\}$, $\{1, 25, 63, 202\}$, $\{1, 33, 102, 309\}$,\adfsplit
 $\{1, 37, 81, 342\}$, $\{1, 47, 269, 318\}$, $\{1, 59, 90, 249\}$, $\{1, 43, 209, 362\}$,\adfsplit
 $\{1, 49, 117, 374\}$, $\{1, 83, 95, 190\}$, $\{2, 59, 87, 127\}$, $\{1, 71, 206, 343\}$,\adfsplit
 $\{1, 187, 275, 286\}$, $\{1, 77, 186, 363\}$, $\{1, 119, 307, 334\}$, $\{1, 99, 274, 323\}$,\adfsplit
 $\{1, 74, 155, 231\}$, $\{1, 78, 101, 335\}$, $\{1, 87, 179, 322\}$, $\{1, 282, 295, 351\}$,\adfsplit
 $\{0, 24, 91, 243\}$, $\{0, 48, 171, 355\}$, $\{0, 107, 227, 311\}$, $\{0, 83, 199, 295\}$,\adfsplit
 $\{0, 73, 151, 255\}$, $\{1, 143, 151, 279\}$, $\{0, 97, 235, 315\}$, $\{0, 59, 192, 273\}$,\adfsplit
 $\{0, 89, 223, 280\}$, $\{0, 36, 203, 341\}$, $\{0, 69, 183, 288\}$, $\{0, 49, 195, 204\}$,\adfsplit
 $\{0, 61, 144, 259\}$, $\{0, 109, 215, 345\}$, $\{0, 80, 221, 351\}$, $\{0, 44, 197, 325\}$,\adfsplit
 $\{0, 93, 228, 353\}$, $\{0, 53, 96, 253\}$, $\{0, 64, 149, 237\}$, $\{0, 52, 165, 261\}$,\adfsplit
 $\{0, 45, 229, 296\}$, $\{0, 56, 128, 216\}$, $\{0, 65, 124, 257\}$, $\{0, 68, 269, 377\}$,\adfsplit
 $\{0, 41, 265, 276\}$
}

\noindent under the action of the mapping $x \mapsto x + 4 \adfmod{380}$.
The deficiency graph is connected and has girth 7.

{\noindent\boldmath $\adfPENT(4, 133)$}~ With point set $Z_{404}$, the 13433 lines are generated from

{\scriptsize
 $\{69, 83, 193, 202\}$, $\{203, 212, 336, 359\}$, $\{202, 204, 206, 235\}$, $\{49, 174, 205, 324\}$,\adfsplit
 $\{240, 376, 89, 63\}$, $\{197, 17, 60, 185\}$, $\{371, 155, 390, 318\}$, $\{37, 1, 97, 355\}$,\adfsplit
 $\{279, 330, 78, 213\}$, $\{51, 231, 31, 33\}$, $\{44, 368, 76, 401\}$, $\{110, 241, 84, 218\}$,\adfsplit
 $\{279, 36, 70, 262\}$, $\{318, 135, 88, 131\}$, $\{327, 43, 161, 263\}$, $\{280, 90, 228, 333\}$,\adfsplit
 $\{28, 238, 401, 159\}$, $\{224, 344, 352, 119\}$, $\{221, 68, 378, 142\}$, $\{280, 166, 391, 159\}$,\adfsplit
 $\{110, 105, 348, 326\}$, $\{325, 224, 357, 66\}$, $\{89, 93, 178, 110\}$, $\{165, 317, 176, 158\}$,\adfsplit
 $\{240, 298, 174, 353\}$, $\{201, 128, 11, 301\}$, $\{81, 219, 272, 394\}$, $\{274, 77, 84, 284\}$,\adfsplit
 $\{72, 398, 24, 335\}$, $\{77, 369, 15, 227\}$, $\{397, 12, 381, 289\}$, $\{185, 38, 23, 157\}$,\adfsplit
 $\{162, 292, 277, 26\}$, $\{276, 103, 316, 190\}$, $\{106, 366, 278, 231\}$, $\{352, 196, 74, 195\}$,\adfsplit
 $\{297, 11, 320, 326\}$, $\{69, 229, 335, 209\}$, $\{48, 394, 166, 225\}$, $\{363, 91, 309, 258\}$,\adfsplit
 $\{24, 183, 222, 170\}$, $\{258, 266, 77, 119\}$, $\{336, 274, 366, 184\}$, $\{308, 166, 72, 248\}$,\adfsplit
 $\{76, 197, 13, 122\}$, $\{346, 287, 266, 265\}$, $\{297, 130, 190, 16\}$, $\{70, 52, 56, 325\}$,\adfsplit
 $\{52, 119, 175, 297\}$, $\{322, 309, 100, 8\}$, $\{94, 170, 139, 255\}$, $\{193, 130, 372, 273\}$,\adfsplit
 $\{225, 346, 51, 160\}$, $\{325, 351, 273, 75\}$, $\{218, 175, 289, 265\}$, $\{57, 113, 345, 84\}$,\adfsplit
 $\{123, 33, 115, 137\}$, $\{202, 44, 227, 391\}$, $\{159, 49, 317, 308\}$, $\{265, 32, 353, 311\}$,\adfsplit
 $\{295, 353, 199, 165\}$, $\{372, 186, 122, 348\}$, $\{378, 107, 367, 337\}$, $\{274, 313, 109, 101\}$,\adfsplit
 $\{61, 119, 64, 0\}$, $\{268, 114, 354, 284\}$, $\{190, 398, 150, 199\}$, $\{126, 260, 93, 55\}$,\adfsplit
 $\{0, 1, 45, 165\}$, $\{0, 3, 37, 77\}$, $\{0, 7, 217, 265\}$, $\{0, 10, 21, 149\}$,\adfsplit
 $\{0, 5, 188, 201\}$, $\{0, 11, 17, 81\}$, $\{0, 15, 109, 185\}$, $\{0, 19, 97, 169\}$,\adfsplit
 $\{0, 22, 89, 349\}$, $\{0, 25, 27, 297\}$, $\{0, 20, 249, 333\}$, $\{0, 38, 189, 257\}$,\adfsplit
 $\{0, 36, 93, 241\}$, $\{1, 7, 229, 282\}$, $\{0, 35, 298, 317\}$, $\{0, 49, 98, 110\}$,\adfsplit
 $\{0, 12, 54, 365\}$, $\{0, 28, 103, 337\}$, $\{0, 50, 82, 85\}$, $\{0, 102, 289, 358\}$,\adfsplit
 $\{0, 41, 51, 116\}$, $\{0, 39, 99, 345\}$, $\{0, 78, 79, 181\}$, $\{0, 107, 293, 398\}$,\adfsplit
 $\{1, 38, 58, 74\}$, $\{1, 39, 71, 378\}$, $\{1, 26, 87, 210\}$, $\{1, 63, 98, 214\}$,\adfsplit
 $\{1, 46, 130, 310\}$, $\{1, 95, 206, 318\}$, $\{1, 75, 178, 202\}$, $\{1, 143, 234, 362\}$,\adfsplit
 $\{1, 179, 191, 322\}$, $\{1, 102, 279, 351\}$, $\{1, 138, 194, 275\}$, $\{1, 211, 278, 306\}$,\adfsplit
 $\{1, 66, 223, 239\}$, $\{1, 250, 294, 395\}$, $\{1, 299, 323, 375\}$, $\{1, 230, 307, 390\}$,\adfsplit
 $\{1, 142, 339, 387\}$, $\{0, 63, 66, 350\}$, $\{0, 62, 362, 367\}$, $\{0, 88, 194, 294\}$,\adfsplit
 $\{0, 68, 238, 370\}$, $\{0, 139, 330, 378\}$, $\{0, 130, 167, 180\}$, $\{2, 43, 98, 155\}$,\adfsplit
 $\{0, 142, 211, 240\}$, $\{2, 95, 195, 331\}$, $\{0, 59, 104, 390\}$, $\{0, 76, 184, 334\}$,\adfsplit
 $\{0, 259, 343, 366\}$, $\{2, 151, 291, 379\}$, $\{0, 144, 319, 363\}$, $\{0, 96, 196, 399\}$,\adfsplit
 $\{0, 132, 327, 355\}$, $\{0, 72, 215, 323\}$, $\{0, 44, 192, 379\}$, $\{0, 135, 232, 387\}$,\adfsplit
 $\{0, 71, 163, 267\}$, $\{0, 235, 315, 383\}$, $\{0, 87, 127, 160\}$, $\{0, 56, 207, 320\}$,\adfsplit
 $\{0, 115, 239, 275\}$
}

\noindent under the action of the mapping $x \mapsto x + 4 \adfmod{404}$.
The deficiency graph is connected and has girth 6.

{\noindent\boldmath $\adfPENT(4, 140)$}~ With point set $Z_{425}$, the 14875 lines are generated from

{\scriptsize
 $\{198, 249, 408, 424\}$, $\{42, 146, 252, 281\}$, $\{38, 176, 228, 386\}$, $\{20, 202, 230, 392\}$,\adfsplit
 $\{5, 159, 180, 274\}$, $\{287, 207, 228, 43\}$, $\{265, 164, 18, 76\}$, $\{161, 366, 78, 205\}$,\adfsplit
 $\{29, 420, 131, 253\}$, $\{194, 378, 365, 214\}$, $\{221, 289, 364, 399\}$, $\{63, 396, 330, 249\}$,\adfsplit
 $\{338, 234, 328, 368\}$, $\{256, 66, 119, 214\}$, $\{400, 164, 391, 188\}$, $\{138, 17, 25, 412\}$,\adfsplit
 $\{185, 204, 163, 311\}$, $\{169, 219, 319, 67\}$, $\{165, 382, 134, 54\}$, $\{393, 185, 81, 305\}$,\adfsplit
 $\{228, 252, 125, 19\}$, $\{361, 168, 164, 189\}$, $\{160, 168, 399, 2\}$, $\{418, 155, 22, 61\}$,\adfsplit
 $\{34, 342, 253, 248\}$, $\{394, 192, 113, 409\}$, $\{282, 423, 55, 411\}$, $\{214, 368, 196, 319\}$,\adfsplit
 $\{259, 395, 45, 169\}$, $\{141, 107, 383, 7\}$, $\{107, 198, 422, 231\}$, $\{405, 110, 62, 128\}$,\adfsplit
 $\{177, 25, 382, 126\}$, $\{181, 356, 97, 363\}$, $\{6, 3, 344, 205\}$, $\{58, 135, 43, 407\}$,\adfsplit
 $\{235, 378, 176, 316\}$, $\{301, 239, 36, 282\}$, $\{320, 5, 154, 415\}$, $\{240, 374, 178, 396\}$,\adfsplit
 $\{133, 367, 172, 77\}$, $\{0, 57, 200, 322\}$, $\{413, 69, 227, 381\}$, $\{201, 331, 352, 417\}$,\adfsplit
 $\{415, 417, 69, 247\}$, $\{402, 348, 390, 29\}$, $\{230, 143, 315, 278\}$, $\{397, 271, 255, 94\}$,\adfsplit
 $\{371, 30, 357, 82\}$, $\{69, 307, 88, 123\}$, $\{26, 113, 54, 294\}$, $\{202, 349, 131, 41\}$,\adfsplit
 $\{384, 311, 259, 202\}$, $\{297, 351, 181, 280\}$, $\{382, 69, 362, 294\}$, $\{314, 303, 196, 140\}$,\adfsplit
 $\{272, 126, 305, 388\}$, $\{123, 184, 248, 190\}$, $\{243, 75, 117, 120\}$, $\{369, 365, 299, 132\}$,\adfsplit
 $\{421, 1, 237, 197\}$, $\{403, 412, 107, 364\}$, $\{80, 416, 15, 324\}$, $\{24, 395, 62, 417\}$,\adfsplit
 $\{177, 39, 110, 221\}$, $\{377, 35, 314, 12\}$, $\{111, 207, 395, 208\}$, $\{238, 316, 105, 183\}$,\adfsplit
 $\{165, 239, 361, 324\}$, $\{206, 55, 39, 245\}$, $\{278, 380, 258, 423\}$, $\{179, 22, 95, 193\}$,\adfsplit
 $\{191, 318, 187, 114\}$, $\{401, 51, 310, 168\}$, $\{217, 393, 148, 48\}$, $\{118, 22, 72, 81\}$,\adfsplit
 $\{387, 222, 24, 240\}$, $\{166, 267, 209, 4\}$, $\{14, 21, 103, 332\}$, $\{315, 189, 402, 69\}$,\adfsplit
 $\{307, 140, 175, 208\}$, $\{23, 378, 273, 376\}$, $\{346, 138, 253, 53\}$, $\{148, 24, 353, 330\}$,\adfsplit
 $\{21, 206, 314, 130\}$, $\{274, 283, 387, 104\}$, $\{62, 68, 199, 192\}$, $\{169, 195, 123, 196\}$,\adfsplit
 $\{187, 46, 110, 265\}$, $\{403, 135, 411, 247\}$, $\{0, 5, 188, 318\}$, $\{0, 6, 73, 233\}$,\adfsplit
 $\{0, 9, 38, 412\}$, $\{0, 3, 26, 53\}$, $\{0, 11, 153, 248\}$, $\{0, 7, 93, 118\}$,\adfsplit
 $\{0, 14, 203, 373\}$, $\{0, 10, 228, 288\}$, $\{0, 20, 328, 418\}$, $\{0, 21, 63, 173\}$,\adfsplit
 $\{0, 24, 138, 293\}$, $\{0, 39, 273, 393\}$, $\{0, 29, 378, 423\}$, $\{1, 2, 18, 213\}$,\adfsplit
 $\{0, 47, 128, 193\}$, $\{1, 4, 103, 253\}$, $\{1, 7, 113, 398\}$, $\{1, 11, 308, 383\}$,\adfsplit
 $\{0, 28, 41, 384\}$, $\{1, 9, 148, 239\}$, $\{0, 27, 69, 343\}$, $\{1, 12, 23, 27\}$,\adfsplit
 $\{0, 108, 109, 302\}$, $\{1, 17, 157, 378\}$, $\{0, 89, 102, 413\}$, $\{1, 24, 64, 328\}$,\adfsplit
 $\{1, 14, 48, 174\}$, $\{1, 21, 99, 408\}$, $\{1, 16, 178, 184\}$, $\{1, 26, 189, 363\}$,\adfsplit
 $\{1, 37, 49, 198\}$, $\{1, 58, 177, 194\}$, $\{1, 31, 134, 303\}$, $\{1, 32, 318, 332\}$,\adfsplit
 $\{1, 47, 283, 369\}$, $\{1, 59, 67, 168\}$, $\{2, 12, 43, 252\}$, $\{1, 222, 248, 402\}$,\adfsplit
 $\{1, 122, 258, 277\}$, $\{2, 29, 73, 147\}$, $\{2, 37, 283, 319\}$, $\{2, 293, 349, 404\}$,\adfsplit
 $\{2, 84, 264, 343\}$, $\{1, 39, 84, 158\}$, $\{1, 188, 329, 362\}$, $\{2, 129, 383, 409\}$,\adfsplit
 $\{0, 40, 169, 379\}$, $\{0, 37, 334, 364\}$, $\{0, 44, 245, 304\}$, $\{0, 49, 51, 284\}$,\adfsplit
 $\{0, 36, 349, 414\}$, $\{0, 104, 116, 287\}$, $\{0, 229, 234, 369\}$, $\{1, 71, 224, 354\}$,\adfsplit
 $\{1, 129, 139, 357\}$, $\{0, 80, 194, 277\}$, $\{0, 281, 337, 374\}$, $\{0, 31, 107, 329\}$,\adfsplit
 $\{0, 144, 204, 297\}$, $\{2, 4, 7, 342\}$, $\{1, 34, 182, 287\}$, $\{0, 222, 247, 294\}$,\adfsplit
 $\{0, 181, 389, 406\}$, $\{1, 36, 121, 394\}$, $\{1, 56, 214, 267\}$, $\{0, 32, 99, 232\}$,\adfsplit
 $\{0, 157, 202, 301\}$, $\{0, 72, 187, 266\}$, $\{0, 105, 242, 317\}$, $\{0, 62, 117, 280\}$,\adfsplit
 $\{0, 96, 177, 251\}$, $\{0, 61, 115, 367\}$, $\{0, 70, 176, 382\}$, $\{1, 51, 112, 311\}$,\adfsplit
 $\{0, 97, 160, 391\}$, $\{0, 172, 221, 261\}$, $\{0, 25, 241, 332\}$, $\{1, 81, 181, 276\}$,\adfsplit
 $\{0, 55, 205, 351\}$, $\{0, 86, 191, 325\}$, $\{0, 311, 376, 421\}$, $\{0, 131, 256, 285\}$,\adfsplit
 $\{0, 30, 90, 260\}$, $\{0, 46, 125, 411\}$, $\{0, 50, 171, 290\}$
}

\noindent under the action of the mapping $x \mapsto x + 5 \adfmod{425}$.
The deficiency graph is connected and has girth 6.

{\noindent\boldmath $\adfPENT(4, 141)$}~ With point set $Z_{428}$, the 15087 lines are generated from

{\scriptsize
 $\{12, 97, 123, 416\}$, $\{117, 313, 318, 332\}$, $\{7, 22, 113, 410\}$, $\{215, 219, 308, 426\}$,\adfsplit
 $\{352, 408, 55, 222\}$, $\{42, 36, 149, 285\}$, $\{318, 399, 263, 316\}$, $\{328, 290, 170, 136\}$,\adfsplit
 $\{244, 345, 314, 315\}$, $\{383, 325, 318, 299\}$, $\{417, 50, 355, 130\}$, $\{158, 388, 63, 285\}$,\adfsplit
 $\{147, 92, 356, 353\}$, $\{246, 367, 318, 220\}$, $\{318, 253, 247, 44\}$, $\{185, 56, 222, 390\}$,\adfsplit
 $\{250, 94, 423, 394\}$, $\{66, 130, 216, 13\}$, $\{90, 410, 191, 216\}$, $\{102, 52, 160, 239\}$,\adfsplit
 $\{162, 168, 121, 217\}$, $\{51, 410, 278, 253\}$, $\{303, 380, 233, 80\}$, $\{258, 152, 13, 27\}$,\adfsplit
 $\{14, 3, 204, 264\}$, $\{118, 231, 314, 189\}$, $\{406, 314, 130, 419\}$, $\{249, 4, 400, 258\}$,\adfsplit
 $\{133, 146, 427, 335\}$, $\{200, 32, 159, 240\}$, $\{323, 110, 267, 424\}$, $\{238, 328, 24, 100\}$,\adfsplit
 $\{374, 418, 228, 165\}$, $\{348, 8, 7, 70\}$, $\{30, 406, 327, 376\}$, $\{346, 370, 235, 406\}$,\adfsplit
 $\{403, 223, 413, 117\}$, $\{294, 7, 198, 160\}$, $\{78, 97, 363, 180\}$, $\{143, 329, 140, 261\}$,\adfsplit
 $\{336, 303, 59, 361\}$, $\{264, 42, 312, 180\}$, $\{111, 248, 373, 49\}$, $\{154, 137, 85, 331\}$,\adfsplit
 $\{98, 25, 173, 183\}$, $\{337, 328, 52, 209\}$, $\{356, 391, 347, 327\}$, $\{57, 172, 238, 28\}$,\adfsplit
 $\{212, 290, 50, 394\}$, $\{1, 138, 305, 110\}$, $\{122, 358, 145, 161\}$, $\{69, 202, 413, 23\}$,\adfsplit
 $\{381, 255, 330, 25\}$, $\{135, 88, 322, 368\}$, $\{104, 387, 410, 371\}$, $\{204, 39, 147, 9\}$,\adfsplit
 $\{390, 108, 380, 263\}$, $\{61, 316, 68, 207\}$, $\{125, 3, 267, 383\}$, $\{227, 402, 19, 159\}$,\adfsplit
 $\{223, 377, 205, 423\}$, $\{0, 1, 86, 94\}$, $\{0, 4, 18, 226\}$, $\{0, 5, 54, 318\}$,\adfsplit
 $\{0, 7, 110, 402\}$, $\{0, 8, 130, 330\}$, $\{0, 11, 58, 346\}$, $\{0, 13, 42, 46\}$,\adfsplit
 $\{0, 15, 294, 406\}$, $\{0, 16, 366, 378\}$, $\{0, 17, 74, 162\}$, $\{0, 19, 170, 386\}$,\adfsplit
 $\{0, 21, 246, 314\}$, $\{0, 23, 186, 410\}$, $\{0, 22, 33, 418\}$, $\{0, 28, 202, 258\}$,\adfsplit
 $\{1, 2, 5, 102\}$, $\{0, 27, 250, 398\}$, $\{0, 45, 126, 374\}$, $\{0, 39, 90, 358\}$,\adfsplit
 $\{0, 41, 262, 310\}$, $\{1, 3, 162, 286\}$, $\{0, 99, 102, 218\}$, $\{1, 7, 114, 190\}$,\adfsplit
 $\{1, 9, 178, 350\}$, $\{1, 21, 366, 382\}$, $\{0, 142, 159, 394\}$, $\{1, 13, 90, 163\}$,\adfsplit
 $\{1, 22, 285, 359\}$, $\{0, 51, 89, 354\}$, $\{1, 23, 45, 282\}$, $\{0, 93, 253, 426\}$,\adfsplit
 $\{1, 33, 155, 402\}$, $\{1, 25, 154, 389\}$, $\{1, 35, 37, 310\}$, $\{1, 46, 55, 387\}$,\adfsplit
 $\{1, 79, 106, 183\}$, $\{1, 29, 150, 289\}$, $\{1, 51, 250, 265\}$, $\{1, 103, 191, 258\}$,\adfsplit
 $\{1, 99, 153, 394\}$, $\{1, 171, 230, 327\}$, $\{1, 49, 326, 347\}$, $\{2, 55, 91, 167\}$,\adfsplit
 $\{2, 119, 131, 399\}$, $\{2, 163, 395, 423\}$, $\{2, 235, 315, 387\}$, $\{0, 20, 191, 231\}$,\adfsplit
 $\{0, 31, 287, 392\}$, $\{0, 37, 119, 307\}$, $\{0, 43, 61, 343\}$, $\{0, 52, 299, 331\}$,\adfsplit
 $\{0, 57, 175, 411\}$, $\{0, 65, 179, 239\}$, $\{0, 44, 107, 359\}$, $\{0, 53, 251, 383\}$,\adfsplit
 $\{0, 73, 407, 415\}$, $\{0, 72, 163, 215\}$, $\{0, 95, 161, 423\}$, $\{1, 67, 181, 291\}$,\adfsplit
 $\{0, 243, 367, 417\}$, $\{1, 91, 211, 235\}$, $\{0, 59, 169, 353\}$, $\{0, 96, 201, 295\}$,\adfsplit
 $\{1, 101, 221, 351\}$, $\{0, 69, 255, 292\}$, $\{0, 177, 269, 355\}$, $\{0, 115, 265, 308\}$,\adfsplit
 $\{0, 207, 241, 297\}$, $\{0, 104, 363, 377\}$, $\{0, 100, 317, 405\}$, $\{0, 141, 221, 329\}$,\adfsplit
 $\{0, 64, 321, 397\}$, $\{0, 77, 212, 393\}$, $\{0, 133, 224, 361\}$, $\{0, 81, 237, 316\}$,\adfsplit
 $\{0, 80, 172, 312\}$, $\{0, 117, 176, 244\}$, $\{0, 149, 188, 373\}$, $\{0, 165, 341, 401\}$,\adfsplit
 $\{0, 145, 160, 357\}$
}

\noindent under the action of the mapping $x \mapsto x + 4 \adfmod{428}$.
The deficiency graph is connected and has girth 8.

{\noindent\boldmath $\adfPENT(4, 149)$}~ With point set $Z_{452}$, the 16837 lines are generated from

{\scriptsize
 $\{44, 345, 355, 408\}$, $\{3, 108, 358, 387\}$, $\{50, 97, 406, 419\}$, $\{1, 38, 69, 100\}$,\adfsplit
 $\{28, 395, 309, 144\}$, $\{24, 332, 195, 81\}$, $\{223, 411, 374, 131\}$, $\{358, 263, 102, 154\}$,\adfsplit
 $\{293, 73, 366, 48\}$, $\{196, 326, 136, 342\}$, $\{400, 214, 79, 345\}$, $\{400, 418, 139, 318\}$,\adfsplit
 $\{357, 242, 257, 144\}$, $\{135, 373, 113, 46\}$, $\{445, 115, 363, 224\}$, $\{312, 59, 225, 317\}$,\adfsplit
 $\{276, 256, 390, 448\}$, $\{176, 59, 160, 378\}$, $\{290, 245, 409, 191\}$, $\{438, 210, 318, 425\}$,\adfsplit
 $\{271, 27, 440, 312\}$, $\{150, 199, 327, 171\}$, $\{52, 87, 326, 309\}$, $\{21, 114, 413, 311\}$,\adfsplit
 $\{19, 68, 82, 155\}$, $\{77, 171, 64, 401\}$, $\{371, 422, 328, 23\}$, $\{260, 232, 384, 42\}$,\adfsplit
 $\{293, 259, 317, 124\}$, $\{288, 47, 425, 380\}$, $\{345, 349, 274, 118\}$, $\{411, 309, 29, 3\}$,\adfsplit
 $\{5, 46, 34, 126\}$, $\{115, 193, 334, 17\}$, $\{113, 258, 405, 404\}$, $\{109, 168, 13, 20\}$,\adfsplit
 $\{281, 334, 300, 327\}$, $\{198, 96, 348, 79\}$, $\{393, 337, 195, 264\}$, $\{103, 84, 423, 155\}$,\adfsplit
 $\{312, 262, 226, 392\}$, $\{109, 427, 8, 184\}$, $\{285, 49, 262, 32\}$, $\{388, 102, 307, 26\}$,\adfsplit
 $\{319, 419, 243, 193\}$, $\{46, 114, 53, 89\}$, $\{149, 347, 3, 13\}$, $\{7, 86, 212, 375\}$,\adfsplit
 $\{69, 40, 435, 173\}$, $\{95, 369, 220, 320\}$, $\{44, 14, 127, 339\}$, $\{87, 216, 268, 83\}$,\adfsplit
 $\{156, 153, 215, 36\}$, $\{37, 436, 187, 243\}$, $\{277, 303, 0, 231\}$, $\{391, 159, 337, 141\}$,\adfsplit
 $\{386, 83, 279, 154\}$, $\{2, 40, 414, 365\}$, $\{19, 414, 376, 122\}$, $\{445, 233, 376, 272\}$,\adfsplit
 $\{19, 447, 289, 354\}$, $\{408, 144, 30, 109\}$, $\{324, 396, 186, 12\}$, $\{245, 251, 169, 213\}$,\adfsplit
 $\{37, 438, 109, 70\}$, $\{314, 395, 401, 160\}$, $\{387, 105, 425, 362\}$, $\{263, 104, 62, 145\}$,\adfsplit
 $\{421, 269, 316, 21\}$, $\{369, 82, 61, 392\}$, $\{364, 359, 18, 281\}$, $\{49, 353, 128, 58\}$,\adfsplit
 $\{77, 254, 409, 315\}$, $\{354, 242, 269, 84\}$, $\{20, 333, 96, 207\}$, $\{443, 427, 134, 188\}$,\adfsplit
 $\{278, 377, 79, 38\}$, $\{234, 404, 67, 3\}$, $\{153, 241, 284, 42\}$, $\{0, 2, 21, 357\}$,\adfsplit
 $\{0, 3, 189, 197\}$, $\{0, 4, 37, 289\}$, $\{0, 6, 177, 401\}$, $\{0, 7, 81, 349\}$,\adfsplit
 $\{0, 9, 23, 441\}$, $\{0, 10, 65, 361\}$, $\{0, 11, 125, 293\}$, $\{0, 15, 265, 329\}$,\adfsplit
 $\{0, 26, 217, 233\}$, $\{0, 12, 109, 437\}$, $\{0, 24, 229, 341\}$, $\{0, 30, 209, 249\}$,\adfsplit
 $\{0, 50, 305, 333\}$, $\{0, 58, 61, 269\}$, $\{0, 63, 273, 381\}$, $\{0, 78, 181, 261\}$,\adfsplit
 $\{1, 2, 189, 194\}$, $\{1, 31, 273, 315\}$, $\{1, 59, 78, 369\}$, $\{1, 13, 82, 186\}$,\adfsplit
 $\{1, 91, 106, 141\}$, $\{0, 31, 93, 141\}$, $\{0, 66, 99, 385\}$, $\{0, 85, 151, 275\}$,\adfsplit
 $\{0, 77, 186, 423\}$, $\{0, 103, 145, 443\}$, $\{1, 71, 107, 118\}$, $\{1, 131, 218, 219\}$,\adfsplit
 $\{1, 138, 202, 279\}$, $\{1, 150, 210, 331\}$, $\{1, 182, 326, 431\}$, $\{1, 130, 278, 302\}$,\adfsplit
 $\{1, 175, 271, 314\}$, $\{1, 206, 250, 435\}$, $\{1, 163, 303, 451\}$, $\{1, 230, 286, 423\}$,\adfsplit
 $\{1, 355, 363, 394\}$, $\{1, 290, 295, 362\}$, $\{1, 247, 439, 442\}$, $\{0, 8, 299, 415\}$,\adfsplit
 $\{0, 46, 215, 439\}$, $\{0, 47, 79, 127\}$, $\{0, 42, 195, 207\}$, $\{0, 67, 219, 296\}$,\adfsplit
 $\{0, 54, 123, 431\}$, $\{0, 122, 387, 427\}$, $\{2, 47, 67, 339\}$, $\{0, 143, 263, 438\}$,\adfsplit
 $\{2, 131, 191, 314\}$, $\{0, 70, 155, 391\}$, $\{0, 75, 168, 426\}$, $\{2, 22, 247, 338\}$,\adfsplit
 $\{0, 138, 204, 379\}$, $\{0, 91, 180, 212\}$, $\{2, 11, 246, 263\}$, $\{0, 55, 56, 220\}$,\adfsplit
 $\{2, 134, 286, 431\}$, $\{0, 307, 334, 362\}$, $\{0, 22, 196, 236\}$, $\{0, 226, 350, 354\}$,\adfsplit
 $\{0, 194, 358, 446\}$, $\{0, 118, 294, 316\}$, $\{0, 170, 434, 442\}$, $\{0, 82, 244, 356\}$,\adfsplit
 $\{0, 48, 132, 346\}$, $\{0, 36, 162, 378\}$, $\{0, 150, 182, 184\}$, $\{0, 64, 224, 310\}$,\adfsplit
 $\{0, 62, 108, 330\}$
}

\noindent under the action of the mapping $x \mapsto x + 4 \adfmod{452}$.
The deficiency graph is connected and has girth 7.

{\noindent\boldmath $\adfPENT(4, 157)$}~ With point set $Z_{476}$, the 18683 lines are generated from

{\scriptsize
 $\{162, 291, 421, 459\}$, $\{13, 56, 375, 465\}$, $\{50, 316, 323, 430\}$, $\{20, 105, 158, 188\}$,\adfsplit
 $\{466, 103, 25, 340\}$, $\{234, 316, 22, 382\}$, $\{260, 336, 131, 68\}$, $\{51, 289, 378, 270\}$,\adfsplit
 $\{411, 371, 89, 444\}$, $\{307, 81, 82, 45\}$, $\{438, 108, 388, 165\}$, $\{64, 74, 198, 459\}$,\adfsplit
 $\{328, 139, 370, 24\}$, $\{430, 65, 27, 145\}$, $\{84, 393, 93, 379\}$, $\{204, 343, 282, 54\}$,\adfsplit
 $\{64, 436, 327, 48\}$, $\{278, 155, 176, 132\}$, $\{91, 211, 4, 227\}$, $\{7, 9, 193, 264\}$,\adfsplit
 $\{163, 36, 470, 343\}$, $\{144, 193, 46, 297\}$, $\{383, 230, 201, 251\}$, $\{195, 204, 416, 70\}$,\adfsplit
 $\{343, 121, 392, 367\}$, $\{415, 180, 141, 450\}$, $\{347, 83, 403, 426\}$, $\{440, 37, 354, 140\}$,\adfsplit
 $\{331, 335, 306, 265\}$, $\{132, 78, 405, 383\}$, $\{290, 25, 241, 131\}$, $\{169, 215, 140, 321\}$,\adfsplit
 $\{49, 441, 79, 434\}$, $\{142, 153, 84, 120\}$, $\{302, 214, 276, 365\}$, $\{207, 34, 371, 205\}$,\adfsplit
 $\{203, 151, 108, 98\}$, $\{285, 401, 137, 79\}$, $\{415, 430, 77, 257\}$, $\{311, 382, 272, 181\}$,\adfsplit
 $\{157, 340, 139, 327\}$, $\{350, 91, 372, 396\}$, $\{200, 303, 195, 438\}$, $\{275, 436, 341, 118\}$,\adfsplit
 $\{327, 182, 78, 148\}$, $\{225, 163, 98, 3\}$, $\{399, 442, 152, 462\}$, $\{417, 134, 331, 444\}$,\adfsplit
 $\{46, 121, 352, 320\}$, $\{87, 61, 126, 118\}$, $\{25, 273, 394, 15\}$, $\{54, 55, 283, 56\}$,\adfsplit
 $\{455, 253, 26, 304\}$, $\{33, 25, 240, 144\}$, $\{253, 416, 272, 124\}$, $\{447, 258, 303, 453\}$,\adfsplit
 $\{147, 125, 243, 66\}$, $\{131, 236, 347, 255\}$, $\{130, 267, 105, 119\}$, $\{79, 232, 315, 466\}$,\adfsplit
 $\{193, 198, 154, 272\}$, $\{123, 88, 76, 436\}$, $\{38, 346, 452, 402\}$, $\{42, 100, 82, 236\}$,\adfsplit
 $\{451, 313, 109, 69\}$, $\{162, 44, 156, 320\}$, $\{133, 465, 378, 383\}$, $\{250, 28, 76, 330\}$,\adfsplit
 $\{37, 150, 95, 41\}$, $\{109, 35, 226, 151\}$, $\{156, 250, 388, 109\}$, $\{108, 190, 241, 100\}$,\adfsplit
 $\{412, 88, 167, 265\}$, $\{332, 74, 134, 276\}$, $\{87, 146, 311, 260\}$, $\{34, 213, 166, 350\}$,\adfsplit
 $\{380, 360, 69, 200\}$, $\{339, 29, 12, 295\}$, $\{144, 185, 45, 359\}$, $\{152, 450, 463, 243\}$,\adfsplit
 $\{202, 322, 439, 412\}$, $\{421, 101, 98, 104\}$, $\{0, 1, 187, 259\}$, $\{0, 2, 59, 143\}$,\adfsplit
 $\{0, 3, 15, 415\}$, $\{0, 4, 203, 343\}$, $\{0, 5, 67, 439\}$, $\{0, 11, 359, 407\}$,\adfsplit
 $\{0, 13, 31, 419\}$, $\{0, 14, 107, 391\}$, $\{0, 18, 119, 411\}$, $\{0, 21, 123, 447\}$,\adfsplit
 $\{0, 30, 211, 387\}$, $\{0, 25, 99, 379\}$, $\{0, 37, 335, 355\}$, $\{0, 45, 55, 167\}$,\adfsplit
 $\{0, 65, 299, 399\}$, $\{0, 93, 175, 243\}$, $\{0, 52, 183, 383\}$, $\{1, 7, 35, 61\}$,\adfsplit
 $\{0, 122, 155, 423\}$, $\{1, 10, 87, 95\}$, $\{0, 142, 191, 435\}$, $\{1, 22, 243, 303\}$,\adfsplit
 $\{1, 17, 195, 231\}$, $\{1, 70, 399, 431\}$, $\{0, 71, 98, 375\}$, $\{0, 38, 106, 147\}$,\adfsplit
 $\{0, 159, 178, 431\}$, $\{0, 61, 267, 392\}$, $\{0, 135, 169, 186\}$, $\{1, 21, 190, 351\}$,\adfsplit
 $\{1, 14, 57, 199\}$, $\{1, 45, 135, 401\}$, $\{1, 29, 127, 290\}$, $\{1, 82, 86, 295\}$,\adfsplit
 $\{1, 106, 307, 314\}$, $\{1, 78, 147, 234\}$, $\{1, 94, 111, 222\}$, $\{1, 182, 206, 447\}$,\adfsplit
 $\{1, 242, 270, 279\}$, $\{1, 191, 258, 462\}$, $\{2, 14, 154, 347\}$, $\{2, 66, 375, 378\}$,\adfsplit
 $\{0, 53, 154, 230\}$, $\{0, 54, 77, 438\}$, $\{0, 70, 86, 306\}$, $\{0, 46, 113, 350\}$,\adfsplit
 $\{0, 68, 314, 450\}$, $\{0, 74, 101, 262\}$, $\{0, 137, 294, 366\}$, $\{1, 46, 98, 378\}$,\adfsplit
 $\{1, 134, 358, 394\}$, $\{0, 194, 249, 374\}$, $\{1, 130, 322, 406\}$, $\{1, 49, 214, 446\}$,\adfsplit
 $\{0, 226, 258, 401\}$, $\{1, 34, 210, 313\}$, $\{0, 209, 281, 354\}$, $\{0, 189, 250, 389\}$,\adfsplit
 $\{0, 150, 285, 349\}$, $\{0, 64, 220, 462\}$, $\{0, 97, 252, 442\}$, $\{0, 80, 286, 352\}$,\adfsplit
 $\{1, 89, 197, 382\}$, $\{0, 60, 297, 404\}$, $\{0, 193, 289, 461\}$, $\{0, 28, 216, 445\}$,\adfsplit
 $\{0, 157, 257, 325\}$, $\{0, 109, 301, 333\}$, $\{0, 81, 92, 241\}$, $\{0, 225, 361, 413\}$,\adfsplit
 $\{0, 108, 233, 356\}$, $\{0, 40, 213, 305\}$, $\{0, 117, 337, 469\}$, $\{0, 105, 217, 453\}$,\adfsplit
 $\{0, 100, 240, 441\}$
}

\noindent under the action of the mapping $x \mapsto x + 4 \adfmod{476}$.
The deficiency graph is connected and has girth 7.

{\noindent\boldmath $\adfPENT(4, 160)$}~ With point set $Z_{485}$, the 19400 lines are generated from

{\scriptsize
 $\{86, 153, 379, 427\}$, $\{23, 76, 400, 411\}$, $\{60, 117, 372, 388\}$, $\{19, 102, 335, 466\}$,\adfsplit
 $\{24, 110, 469, 473\}$, $\{417, 282, 94, 239\}$, $\{381, 459, 384, 254\}$, $\{439, 334, 134, 313\}$,\adfsplit
 $\{77, 106, 24, 422\}$, $\{301, 337, 33, 263\}$, $\{126, 349, 75, 323\}$, $\{333, 183, 462, 360\}$,\adfsplit
 $\{108, 390, 481, 475\}$, $\{141, 142, 63, 330\}$, $\{150, 413, 417, 32\}$, $\{394, 69, 55, 288\}$,\adfsplit
 $\{425, 178, 98, 196\}$, $\{82, 227, 137, 294\}$, $\{7, 186, 354, 260\}$, $\{68, 317, 159, 383\}$,\adfsplit
 $\{302, 113, 306, 361\}$, $\{255, 163, 462, 361\}$, $\{90, 125, 282, 271\}$, $\{377, 336, 91, 297\}$,\adfsplit
 $\{77, 213, 366, 62\}$, $\{466, 300, 328, 83\}$, $\{334, 0, 242, 83\}$, $\{392, 259, 174, 297\}$,\adfsplit
 $\{204, 277, 206, 482\}$, $\{18, 197, 462, 438\}$, $\{137, 303, 459, 216\}$, $\{328, 235, 395, 134\}$,\adfsplit
 $\{263, 269, 339, 384\}$, $\{438, 260, 85, 107\}$, $\{235, 137, 298, 234\}$, $\{113, 88, 449, 3\}$,\adfsplit
 $\{113, 442, 157, 475\}$, $\{39, 97, 141, 267\}$, $\{377, 356, 143, 12\}$, $\{62, 30, 173, 70\}$,\adfsplit
 $\{293, 156, 363, 312\}$, $\{412, 129, 415, 303\}$, $\{414, 60, 7, 249\}$, $\{260, 212, 297, 258\}$,\adfsplit
 $\{301, 334, 370, 168\}$, $\{382, 113, 202, 384\}$, $\{147, 340, 101, 223\}$, $\{372, 403, 55, 151\}$,\adfsplit
 $\{122, 162, 281, 285\}$, $\{463, 127, 385, 162\}$, $\{179, 463, 272, 41\}$, $\{1, 318, 170, 406\}$,\adfsplit
 $\{322, 372, 346, 204\}$, $\{259, 31, 122, 246\}$, $\{253, 184, 174, 203\}$, $\{443, 385, 370, 173\}$,\adfsplit
 $\{22, 419, 271, 363\}$, $\{85, 476, 435, 3\}$, $\{231, 175, 425, 91\}$, $\{188, 126, 196, 144\}$,\adfsplit
 $\{68, 455, 299, 199\}$, $\{301, 57, 106, 229\}$, $\{396, 462, 481, 78\}$, $\{397, 379, 226, 337\}$,\adfsplit
 $\{123, 35, 474, 365\}$, $\{212, 395, 192, 407\}$, $\{26, 160, 6, 355\}$, $\{314, 307, 409, 335\}$,\adfsplit
 $\{81, 232, 417, 205\}$, $\{156, 285, 109, 356\}$, $\{14, 301, 328, 462\}$, $\{220, 479, 169, 282\}$,\adfsplit
 $\{133, 353, 228, 124\}$, $\{201, 25, 361, 94\}$, $\{16, 326, 174, 456\}$, $\{116, 292, 424, 55\}$,\adfsplit
 $\{151, 472, 356, 364\}$, $\{83, 198, 144, 63\}$, $\{149, 290, 419, 372\}$, $\{57, 435, 406, 95\}$,\adfsplit
 $\{266, 153, 410, 443\}$, $\{89, 7, 447, 88\}$, $\{251, 21, 140, 429\}$, $\{201, 83, 238, 38\}$,\adfsplit
 $\{404, 375, 270, 223\}$, $\{338, 116, 4, 193\}$, $\{423, 65, 276, 475\}$, $\{286, 11, 83, 469\}$,\adfsplit
 $\{243, 63, 316, 159\}$, $\{177, 391, 15, 107\}$, $\{139, 242, 347, 372\}$, $\{404, 342, 471, 170\}$,\adfsplit
 $\{109, 45, 271, 32\}$, $\{302, 435, 150, 297\}$, $\{124, 297, 393, 369\}$, $\{28, 58, 399, 249\}$,\adfsplit
 $\{168, 326, 422, 358\}$, $\{392, 384, 242, 188\}$, $\{159, 298, 398, 100\}$, $\{167, 308, 313, 181\}$,\adfsplit
 $\{21, 126, 271, 184\}$, $\{379, 348, 293, 125\}$, $\{256, 28, 356, 136\}$, $\{51, 116, 479, 447\}$,\adfsplit
 $\{329, 371, 228, 180\}$, $\{0, 1, 7, 417\}$, $\{0, 2, 20, 23\}$, $\{0, 4, 173, 188\}$,\adfsplit
 $\{0, 5, 52, 343\}$, $\{0, 8, 128, 453\}$, $\{0, 9, 43, 72\}$, $\{0, 10, 187, 408\}$,\adfsplit
 $\{0, 13, 16, 293\}$, $\{0, 21, 68, 423\}$, $\{0, 17, 278, 383\}$, $\{0, 18, 25, 413\}$,\adfsplit
 $\{0, 26, 213, 327\}$, $\{0, 24, 113, 122\}$, $\{0, 31, 138, 422\}$, $\{0, 34, 217, 473\}$,\adfsplit
 $\{0, 84, 193, 428\}$, $\{0, 38, 132, 448\}$, $\{0, 54, 303, 468\}$, $\{0, 97, 183, 443\}$,\adfsplit
 $\{0, 36, 182, 208\}$, $\{0, 44, 323, 463\}$, $\{0, 79, 112, 378\}$, $\{1, 3, 303, 313\}$,\adfsplit
 $\{0, 76, 258, 318\}$, $\{0, 42, 163, 197\}$, $\{0, 154, 222, 313\}$, $\{0, 77, 133, 202\}$,\adfsplit
 $\{1, 6, 58, 268\}$, $\{0, 89, 253, 337\}$, $\{1, 11, 213, 412\}$, $\{1, 8, 31, 458\}$,\adfsplit
 $\{1, 44, 57, 163\}$, $\{1, 24, 262, 298\}$, $\{1, 17, 257, 453\}$, $\{1, 18, 192, 461\}$,\adfsplit
 $\{1, 32, 308, 436\}$, $\{1, 29, 227, 473\}$, $\{1, 77, 89, 438\}$, $\{1, 36, 167, 228\}$,\adfsplit
 $\{1, 94, 338, 352\}$, $\{1, 96, 248, 394\}$, $\{1, 69, 84, 393\}$, $\{1, 267, 289, 383\}$,\adfsplit
 $\{1, 74, 301, 443\}$, $\{1, 209, 264, 418\}$, $\{1, 187, 363, 422\}$, $\{2, 29, 154, 228\}$,\adfsplit
 $\{2, 8, 59, 422\}$, $\{2, 114, 173, 284\}$, $\{2, 3, 167, 477\}$, $\{3, 284, 439, 474\}$,\adfsplit
 $\{1, 118, 129, 474\}$, $\{0, 19, 287, 397\}$, $\{0, 66, 117, 377\}$, $\{0, 65, 342, 372\}$,\adfsplit
 $\{0, 104, 127, 402\}$, $\{0, 67, 170, 257\}$, $\{0, 81, 142, 251\}$, $\{0, 30, 231, 442\}$,\adfsplit
 $\{0, 99, 247, 407\}$, $\{0, 70, 171, 462\}$, $\{1, 104, 107, 339\}$, $\{1, 119, 254, 382\}$,\adfsplit
 $\{1, 54, 261, 347\}$, $\{0, 272, 461, 476\}$, $\{1, 91, 284, 477\}$, $\{1, 134, 424, 452\}$,\adfsplit
 $\{0, 137, 284, 469\}$, $\{1, 109, 252, 369\}$, $\{0, 144, 357, 409\}$, $\{1, 61, 432, 449\}$,\adfsplit
 $\{0, 45, 261, 371\}$, $\{0, 46, 90, 396\}$, $\{0, 80, 276, 466\}$, $\{1, 41, 354, 361\}$,\adfsplit
 $\{0, 139, 321, 436\}$, $\{0, 116, 150, 421\}$, $\{0, 126, 281, 394\}$, $\{0, 194, 219, 431\}$,\adfsplit
 $\{0, 244, 364, 381\}$, $\{0, 60, 266, 280\}$, $\{0, 374, 406, 454\}$, $\{0, 125, 346, 444\}$,\adfsplit
 $\{0, 121, 264, 294\}$, $\{0, 141, 180, 459\}$, $\{0, 55, 241, 304\}$, $\{0, 174, 404, 426\}$,\adfsplit
 $\{0, 120, 299, 389\}$, $\{0, 164, 214, 225\}$, $\{0, 39, 210, 414\}$, $\{0, 119, 124, 184\}$,\adfsplit
 $\{0, 95, 209, 419\}$, $\{0, 100, 230, 345\}$, $\{0, 159, 165, 215\}$, $\{0, 49, 185, 295\}$
}

\noindent under the action of the mapping $x \mapsto x + 5 \adfmod{485}$.
The deficiency graph is connected and has girth 7.

{\noindent\boldmath $\adfPENT(4, 165)$}~ With point set $Z_{500}$, the 20625 lines are generated from

{\scriptsize
 $\{281, 389, 406, 451\}$, $\{112, 190, 220, 422\}$, $\{81, 96, 313, 351\}$, $\{52, 143, 154, 363\}$,\adfsplit
 $\{67, 68, 319, 451\}$, $\{476, 99, 234, 397\}$, $\{377, 489, 290, 108\}$, $\{309, 268, 304, 243\}$,\adfsplit
 $\{405, 7, 269, 218\}$, $\{83, 82, 61, 106\}$, $\{333, 342, 33, 90\}$, $\{86, 240, 43, 214\}$,\adfsplit
 $\{407, 388, 451, 216\}$, $\{285, 111, 129, 178\}$, $\{19, 316, 123, 53\}$, $\{475, 390, 404, 276\}$,\adfsplit
 $\{368, 406, 364, 38\}$, $\{259, 297, 471, 403\}$, $\{179, 457, 98, 339\}$, $\{342, 170, 491, 430\}$,\adfsplit
 $\{26, 482, 463, 129\}$, $\{113, 271, 448, 310\}$, $\{311, 347, 38, 473\}$, $\{25, 397, 453, 314\}$,\adfsplit
 $\{278, 431, 36, 459\}$, $\{345, 265, 4, 382\}$, $\{102, 402, 133, 193\}$, $\{169, 300, 317, 246\}$,\adfsplit
 $\{384, 458, 127, 312\}$, $\{39, 411, 400, 414\}$, $\{322, 139, 432, 42\}$, $\{71, 238, 80, 156\}$,\adfsplit
 $\{60, 122, 339, 29\}$, $\{267, 445, 360, 217\}$, $\{138, 313, 254, 130\}$, $\{320, 251, 172, 253\}$,\adfsplit
 $\{17, 120, 349, 91\}$, $\{314, 165, 108, 89\}$, $\{337, 388, 435, 127\}$, $\{254, 283, 121, 163\}$,\adfsplit
 $\{407, 444, 212, 76\}$, $\{114, 116, 386, 472\}$, $\{202, 54, 104, 332\}$, $\{355, 136, 388, 420\}$,\adfsplit
 $\{311, 371, 182, 484\}$, $\{158, 446, 138, 457\}$, $\{47, 399, 155, 434\}$, $\{156, 280, 67, 182\}$,\adfsplit
 $\{482, 416, 263, 3\}$, $\{311, 27, 58, 406\}$, $\{37, 405, 488, 317\}$, $\{146, 168, 415, 17\}$,\adfsplit
 $\{405, 23, 429, 95\}$, $\{436, 22, 32, 136\}$, $\{239, 490, 426, 212\}$, $\{409, 176, 245, 342\}$,\adfsplit
 $\{415, 406, 337, 121\}$, $\{414, 118, 334, 116\}$, $\{260, 310, 487, 394\}$, $\{161, 417, 427, 410\}$,\adfsplit
 $\{197, 86, 11, 205\}$, $\{126, 174, 142, 387\}$, $\{76, 333, 291, 37\}$, $\{201, 5, 437, 442\}$,\adfsplit
 $\{170, 252, 66, 332\}$, $\{299, 24, 481, 233\}$, $\{427, 434, 159, 491\}$, $\{355, 332, 431, 339\}$,\adfsplit
 $\{70, 280, 467, 74\}$, $\{487, 38, 43, 11\}$, $\{262, 45, 41, 275\}$, $\{437, 483, 104, 120\}$,\adfsplit
 $\{287, 184, 238, 177\}$, $\{12, 392, 213, 155\}$, $\{270, 455, 113, 257\}$, $\{132, 458, 308, 321\}$,\adfsplit
 $\{491, 175, 39, 122\}$, $\{317, 190, 67, 370\}$, $\{389, 83, 208, 142\}$, $\{345, 205, 351, 184\}$,\adfsplit
 $\{457, 62, 244, 80\}$, $\{219, 154, 118, 269\}$, $\{284, 299, 213, 378\}$, $\{459, 471, 100, 434\}$,\adfsplit
 $\{190, 409, 374, 266\}$, $\{0, 1, 178, 230\}$, $\{0, 3, 18, 162\}$, $\{0, 6, 12, 366\}$,\adfsplit
 $\{0, 8, 246, 358\}$, $\{0, 9, 10, 286\}$, $\{0, 20, 126, 458\}$, $\{0, 22, 34, 210\}$,\adfsplit
 $\{0, 28, 254, 410\}$, $\{0, 24, 274, 330\}$, $\{0, 29, 70, 190\}$, $\{0, 25, 58, 194\}$,\adfsplit
 $\{0, 30, 33, 266\}$, $\{0, 31, 130, 422\}$, $\{0, 35, 90, 394\}$, $\{0, 39, 186, 282\}$,\adfsplit
 $\{0, 37, 342, 442\}$, $\{0, 43, 426, 466\}$, $\{1, 3, 206, 274\}$, $\{0, 55, 262, 322\}$,\adfsplit
 $\{1, 13, 86, 114\}$, $\{0, 46, 65, 302\}$, $\{1, 21, 174, 266\}$, $\{0, 89, 118, 454\}$,\adfsplit
 $\{1, 26, 53, 454\}$, $\{0, 51, 122, 462\}$, $\{0, 138, 171, 271\}$, $\{0, 45, 231, 374\}$,\adfsplit
 $\{0, 53, 135, 430\}$, $\{0, 67, 154, 391\}$, $\{1, 19, 146, 347\}$, $\{0, 59, 223, 398\}$,\adfsplit
 $\{1, 15, 211, 446\}$, $\{1, 29, 183, 322\}$, $\{1, 31, 85, 426\}$, $\{1, 35, 117, 462\}$,\adfsplit
 $\{1, 27, 439, 486\}$, $\{1, 37, 261, 370\}$, $\{1, 90, 293, 427\}$, $\{1, 65, 186, 487\}$,\adfsplit
 $\{1, 82, 227, 321\}$, $\{1, 143, 367, 478\}$, $\{1, 194, 387, 471\}$, $\{2, 175, 287, 383\}$,\adfsplit
 $\{1, 263, 303, 458\}$, $\{1, 214, 351, 371\}$, $\{0, 40, 335, 459\}$, $\{0, 44, 127, 447\}$,\adfsplit
 $\{0, 48, 159, 211\}$, $\{0, 61, 183, 339\}$, $\{0, 60, 155, 343\}$, $\{0, 77, 479, 487\}$,\adfsplit
 $\{0, 117, 267, 495\}$, $\{0, 93, 147, 299\}$, $\{0, 101, 319, 399\}$, $\{0, 151, 173, 351\}$,\adfsplit
 $\{0, 73, 131, 427\}$, $\{0, 119, 355, 361\}$, $\{1, 33, 391, 395\}$, $\{0, 169, 264, 443\}$,\adfsplit
 $\{1, 41, 161, 287\}$, $\{0, 177, 291, 301\}$, $\{0, 277, 367, 453\}$, $\{0, 107, 152, 393\}$,\adfsplit
 $\{0, 115, 141, 185\}$, $\{0, 129, 259, 477\}$, $\{0, 75, 121, 437\}$, $\{0, 153, 308, 483\}$,\adfsplit
 $\{0, 100, 305, 409\}$, $\{0, 197, 293, 385\}$, $\{0, 125, 337, 340\}$, $\{0, 116, 249, 265\}$,\adfsplit
 $\{0, 145, 280, 473\}$, $\{0, 97, 289, 344\}$, $\{0, 52, 325, 425\}$, $\{0, 88, 225, 312\}$,\adfsplit
 $\{0, 68, 208, 313\}$, $\{0, 109, 157, 168\}$, $\{0, 56, 240, 353\}$, $\{0, 84, 196, 288\}$,\adfsplit
 $\{0, 64, 244, 465\}$
}

\noindent under the action of the mapping $x \mapsto x + 4 \adfmod{500}$.
The deficiency graph is connected and has girth 8.

{\noindent\boldmath $\adfPENT(4, 173)$}~ With point set $Z_{524}$, the 22663 lines are generated from

{\scriptsize
 $\{230, 240, 284, 447\}$, $\{209, 299, 317, 355\}$, $\{50, 203, 296, 478\}$, $\{80, 173, 229, 326\}$,\adfsplit
 $\{219, 290, 85, 402\}$, $\{147, 307, 151, 497\}$, $\{422, 314, 170, 203\}$, $\{389, 117, 518, 205\}$,\adfsplit
 $\{227, 233, 242, 103\}$, $\{140, 219, 505, 142\}$, $\{361, 385, 225, 335\}$, $\{192, 86, 126, 197\}$,\adfsplit
 $\{193, 447, 89, 162\}$, $\{235, 290, 362, 364\}$, $\{89, 119, 323, 45\}$, $\{178, 53, 264, 205\}$,\adfsplit
 $\{422, 198, 53, 475\}$, $\{523, 473, 329, 204\}$, $\{435, 102, 145, 324\}$, $\{24, 316, 133, 290\}$,\adfsplit
 $\{37, 76, 340, 43\}$, $\{77, 328, 414, 65\}$, $\{113, 446, 470, 7\}$, $\{82, 448, 51, 329\}$,\adfsplit
 $\{187, 15, 36, 364\}$, $\{219, 5, 326, 373\}$, $\{419, 339, 328, 63\}$, $\{14, 382, 97, 7\}$,\adfsplit
 $\{407, 269, 90, 172\}$, $\{6, 120, 480, 493\}$, $\{14, 390, 194, 410\}$, $\{6, 45, 208, 177\}$,\adfsplit
 $\{466, 274, 106, 381\}$, $\{216, 151, 58, 115\}$, $\{134, 516, 137, 267\}$, $\{126, 58, 331, 64\}$,\adfsplit
 $\{36, 1, 138, 283\}$, $\{207, 161, 444, 49\}$, $\{257, 450, 43, 110\}$, $\{418, 55, 390, 218\}$,\adfsplit
 $\{271, 231, 109, 429\}$, $\{210, 70, 405, 112\}$, $\{304, 31, 97, 263\}$, $\{381, 321, 462, 404\}$,\adfsplit
 $\{513, 124, 165, 476\}$, $\{329, 91, 387, 172\}$, $\{318, 199, 252, 91\}$, $\{184, 465, 247, 510\}$,\adfsplit
 $\{234, 241, 289, 31\}$, $\{38, 190, 309, 54\}$, $\{373, 401, 500, 391\}$, $\{82, 142, 131, 25\}$,\adfsplit
 $\{276, 504, 10, 40\}$, $\{358, 254, 140, 357\}$, $\{81, 128, 143, 373\}$, $\{401, 418, 367, 499\}$,\adfsplit
 $\{59, 511, 280, 105\}$, $\{67, 70, 38, 154\}$, $\{382, 148, 328, 181\}$, $\{42, 216, 514, 59\}$,\adfsplit
 $\{283, 158, 366, 403\}$, $\{233, 208, 308, 159\}$, $\{402, 309, 189, 491\}$, $\{317, 121, 244, 58\}$,\adfsplit
 $\{443, 186, 60, 125\}$, $\{424, 62, 351, 306\}$, $\{495, 435, 76, 479\}$, $\{46, 97, 113, 167\}$,\adfsplit
 $\{350, 500, 57, 175\}$, $\{259, 118, 256, 443\}$, $\{210, 59, 245, 395\}$, $\{78, 479, 379, 340\}$,\adfsplit
 $\{225, 145, 446, 115\}$, $\{103, 480, 416, 458\}$, $\{369, 435, 192, 107\}$, $\{91, 518, 3, 264\}$,\adfsplit
 $\{189, 178, 102, 11\}$, $\{228, 238, 395, 437\}$, $\{132, 411, 397, 76\}$, $\{55, 308, 335, 200\}$,\adfsplit
 $\{203, 418, 88, 212\}$, $\{462, 175, 508, 463\}$, $\{485, 468, 183, 173\}$, $\{395, 422, 130, 95\}$,\adfsplit
 $\{345, 484, 91, 316\}$, $\{514, 55, 475, 480\}$, $\{131, 64, 387, 395\}$, $\{395, 393, 2, 372\}$,\adfsplit
 $\{481, 34, 215, 238\}$, $\{291, 417, 249, 446\}$, $\{49, 101, 216, 197\}$, $\{233, 282, 338, 112\}$,\adfsplit
 $\{91, 48, 204, 375\}$, $\{0, 4, 55, 183\}$, $\{0, 1, 35, 467\}$, $\{0, 6, 47, 427\}$,\adfsplit
 $\{0, 7, 8, 463\}$, $\{0, 9, 195, 219\}$, $\{0, 12, 119, 295\}$, $\{0, 19, 31, 95\}$,\adfsplit
 $\{0, 18, 155, 371\}$, $\{0, 20, 123, 507\}$, $\{0, 16, 75, 407\}$, $\{0, 14, 343, 495\}$,\adfsplit
 $\{0, 70, 311, 339\}$, $\{0, 87, 141, 223\}$, $\{1, 9, 207, 227\}$, $\{0, 49, 71, 291\}$,\adfsplit
 $\{0, 74, 355, 387\}$, $\{0, 22, 315, 399\}$, $\{0, 26, 255, 435\}$, $\{0, 57, 83, 511\}$,\adfsplit
 $\{1, 5, 379, 431\}$, $\{0, 134, 159, 307\}$, $\{1, 22, 127, 487\}$, $\{1, 14, 191, 403\}$,\adfsplit
 $\{1, 21, 363, 411\}$, $\{1, 102, 115, 439\}$, $\{1, 143, 238, 351\}$, $\{1, 42, 275, 387\}$,\adfsplit
 $\{1, 87, 101, 203\}$, $\{0, 94, 203, 362\}$, $\{1, 26, 251, 503\}$, $\{0, 143, 225, 454\}$,\adfsplit
 $\{0, 290, 370, 499\}$, $\{1, 166, 331, 430\}$, $\{1, 90, 95, 366\}$, $\{1, 77, 231, 310\}$,\adfsplit
 $\{1, 6, 79, 141\}$, $\{1, 150, 171, 173\}$, $\{1, 65, 447, 510\}$, $\{2, 103, 178, 406\}$,\adfsplit
 $\{1, 323, 342, 466\}$, $\{0, 24, 202, 422\}$, $\{0, 28, 106, 346\}$, $\{0, 32, 122, 434\}$,\adfsplit
 $\{0, 36, 146, 306\}$, $\{0, 38, 52, 274\}$, $\{0, 40, 190, 282\}$, $\{0, 46, 382, 440\}$,\adfsplit
 $\{0, 45, 342, 430\}$, $\{0, 53, 166, 174\}$, $\{0, 69, 310, 314\}$, $\{0, 77, 186, 250\}$,\adfsplit
 $\{0, 82, 116, 506\}$, $\{0, 294, 393, 426\}$, $\{0, 117, 334, 378\}$, $\{0, 358, 394, 509\}$,\adfsplit
 $\{0, 198, 210, 248\}$, $\{0, 169, 238, 369\}$, $\{0, 48, 285, 462\}$, $\{0, 118, 193, 457\}$,\adfsplit
 $\{1, 37, 225, 434\}$, $\{0, 194, 413, 445\}$, $\{1, 66, 85, 333\}$, $\{0, 286, 429, 521\}$,\adfsplit
 $\{0, 76, 214, 513\}$, $\{0, 92, 277, 446\}$, $\{1, 54, 217, 397\}$, $\{0, 101, 173, 337\}$,\adfsplit
 $\{0, 85, 153, 420\}$, $\{0, 89, 132, 333\}$, $\{0, 181, 297, 421\}$, $\{0, 61, 289, 329\}$,\adfsplit
 $\{0, 128, 325, 380\}$, $\{0, 72, 233, 453\}$, $\{0, 68, 148, 372\}$, $\{0, 113, 204, 388\}$,\adfsplit
 $\{0, 133, 140, 348\}$, $\{0, 105, 212, 324\}$, $\{0, 96, 301, 364\}$, $\{0, 88, 253, 280\}$,\adfsplit
 $\{0, 137, 188, 308\}$
}

\noindent under the action of the mapping $x \mapsto x + 4 \adfmod{524}$.
The deficiency graph is connected and has girth 6.

{\noindent\boldmath $\adfPENT(4, 180)$}~ With point set $Z_{545}$, the 24525 lines are generated from

{\scriptsize
 $\{100, 118, 126, 445\}$, $\{29, 64, 420, 524\}$, $\{242, 307, 324, 333\}$, $\{108, 217, 430, 443\}$,\adfsplit
 $\{26, 227, 486, 521\}$, $\{13, 147, 196, 24\}$, $\{258, 308, 345, 273\}$, $\{369, 406, 479, 506\}$,\adfsplit
 $\{532, 277, 97, 91\}$, $\{208, 538, 36, 29\}$, $\{541, 179, 189, 27\}$, $\{83, 165, 278, 11\}$,\adfsplit
 $\{102, 319, 281, 451\}$, $\{467, 544, 368, 427\}$, $\{422, 541, 293, 153\}$, $\{185, 432, 139, 378\}$,\adfsplit
 $\{259, 440, 160, 419\}$, $\{442, 359, 462, 285\}$, $\{39, 330, 294, 45\}$, $\{418, 395, 381, 334\}$,\adfsplit
 $\{281, 327, 201, 47\}$, $\{282, 46, 500, 93\}$, $\{157, 122, 27, 4\}$, $\{492, 242, 506, 117\}$,\adfsplit
 $\{216, 423, 20, 543\}$, $\{479, 185, 340, 38\}$, $\{352, 143, 147, 46\}$, $\{106, 77, 429, 541\}$,\adfsplit
 $\{466, 99, 124, 404\}$, $\{170, 140, 146, 440\}$, $\{446, 359, 414, 429\}$, $\{320, 341, 472, 488\}$,\adfsplit
 $\{296, 388, 243, 418\}$, $\{249, 26, 318, 131\}$, $\{171, 160, 531, 224\}$, $\{178, 251, 149, 525\}$,\adfsplit
 $\{80, 116, 396, 372\}$, $\{493, 289, 258, 146\}$, $\{254, 217, 441, 140\}$, $\{442, 143, 498, 57\}$,\adfsplit
 $\{325, 276, 449, 447\}$, $\{346, 198, 258, 117\}$, $\{136, 190, 445, 352\}$, $\{285, 274, 178, 140\}$,\adfsplit
 $\{42, 155, 354, 169\}$, $\{301, 296, 309, 207\}$, $\{266, 310, 382, 241\}$, $\{278, 340, 189, 506\}$,\adfsplit
 $\{401, 306, 113, 118\}$, $\{363, 72, 111, 433\}$, $\{195, 22, 179, 303\}$, $\{447, 459, 174, 303\}$,\adfsplit
 $\{204, 145, 384, 148\}$, $\{147, 314, 181, 413\}$, $\{232, 459, 426, 214\}$, $\{261, 206, 100, 116\}$,\adfsplit
 $\{421, 94, 393, 268\}$, $\{12, 544, 78, 53\}$, $\{459, 487, 354, 0\}$, $\{366, 26, 223, 292\}$,\adfsplit
 $\{226, 432, 265, 296\}$, $\{293, 292, 195, 199\}$, $\{54, 131, 7, 119\}$, $\{259, 527, 24, 62\}$,\adfsplit
 $\{103, 336, 258, 418\}$, $\{133, 236, 466, 320\}$, $\{479, 135, 413, 130\}$, $\{194, 411, 452, 431\}$,\adfsplit
 $\{107, 199, 242, 44\}$, $\{14, 462, 223, 161\}$, $\{305, 181, 424, 421\}$, $\{418, 196, 198, 510\}$,\adfsplit
 $\{157, 245, 321, 288\}$, $\{211, 54, 325, 435\}$, $\{458, 169, 25, 153\}$, $\{363, 261, 94, 260\}$,\adfsplit
 $\{276, 205, 180, 148\}$, $\{6, 197, 155, 434\}$, $\{403, 198, 152, 162\}$, $\{216, 201, 486, 15\}$,\adfsplit
 $\{169, 151, 202, 83\}$, $\{68, 37, 230, 267\}$, $\{209, 128, 109, 261\}$, $\{59, 503, 131, 261\}$,\adfsplit
 $\{505, 432, 486, 135\}$, $\{348, 198, 427, 149\}$, $\{455, 482, 119, 35\}$, $\{75, 37, 374, 524\}$,\adfsplit
 $\{119, 95, 348, 289\}$, $\{153, 208, 335, 400\}$, $\{60, 522, 252, 245\}$, $\{57, 226, 185, 452\}$,\adfsplit
 $\{379, 334, 358, 480\}$, $\{414, 30, 454, 306\}$, $\{233, 16, 137, 398\}$, $\{90, 298, 98, 77\}$,\adfsplit
 $\{90, 438, 159, 362\}$, $\{474, 248, 60, 70\}$, $\{100, 348, 179, 11\}$, $\{79, 299, 412, 57\}$,\adfsplit
 $\{123, 393, 516, 405\}$, $\{404, 172, 48, 115\}$, $\{536, 287, 68, 317\}$, $\{337, 120, 336, 235\}$,\adfsplit
 $\{474, 172, 380, 216\}$, $\{0, 2, 66, 401\}$, $\{0, 9, 51, 266\}$, $\{0, 12, 151, 286\}$,\adfsplit
 $\{0, 15, 61, 221\}$, $\{0, 17, 86, 411\}$, $\{0, 19, 141, 181\}$, $\{0, 20, 191, 436\}$,\adfsplit
 $\{0, 22, 121, 356\}$, $\{0, 28, 326, 446\}$, $\{0, 29, 281, 341\}$, $\{0, 32, 176, 241\}$,\adfsplit
 $\{0, 33, 56, 246\}$, $\{0, 34, 131, 331\}$, $\{0, 44, 291, 541\}$, $\{0, 47, 361, 516\}$,\adfsplit
 $\{0, 40, 346, 376\}$, $\{0, 54, 261, 441\}$, $\{0, 39, 311, 386\}$, $\{0, 62, 81, 511\}$,\adfsplit
 $\{1, 8, 17, 536\}$, $\{1, 28, 67, 371\}$, $\{1, 12, 126, 138\}$, $\{0, 48, 156, 536\}$,\adfsplit
 $\{0, 53, 256, 481\}$, $\{0, 82, 406, 451\}$, $\{0, 52, 136, 486\}$, $\{1, 23, 141, 291\}$,\adfsplit
 $\{0, 58, 85, 296\}$, $\{1, 24, 44, 288\}$, $\{0, 49, 184, 466\}$, $\{0, 73, 252, 426\}$,\adfsplit
 $\{0, 93, 224, 366\}$, $\{1, 37, 88, 378\}$, $\{1, 59, 139, 388\}$, $\{1, 33, 43, 154\}$,\adfsplit
 $\{1, 53, 72, 143\}$, $\{1, 79, 84, 87\}$, $\{1, 62, 69, 284\}$, $\{1, 68, 94, 209\}$,\adfsplit
 $\{1, 57, 89, 433\}$, $\{1, 92, 164, 238\}$, $\{1, 104, 108, 279\}$, $\{1, 99, 277, 479\}$,\adfsplit
 $\{1, 107, 168, 364\}$, $\{1, 124, 163, 354\}$, $\{1, 162, 247, 369\}$, $\{1, 178, 342, 489\}$,\adfsplit
 $\{1, 167, 303, 488\}$, $\{1, 214, 228, 387\}$, $\{1, 189, 262, 439\}$, $\{1, 114, 304, 368\}$,\adfsplit
 $\{1, 182, 269, 483\}$, $\{1, 133, 314, 453\}$, $\{1, 254, 397, 408\}$, $\{1, 183, 448, 454\}$,\adfsplit
 $\{1, 292, 347, 498\}$, $\{1, 319, 417, 467\}$, $\{1, 412, 437, 464\}$, $\{1, 349, 463, 487\}$,\adfsplit
 $\{1, 362, 537, 543\}$, $\{1, 283, 332, 533\}$, $\{1, 357, 419, 544\}$, $\{1, 508, 528, 542\}$,\adfsplit
 $\{0, 35, 339, 469\}$, $\{0, 45, 219, 314\}$, $\{0, 50, 284, 489\}$, $\{0, 55, 319, 464\}$,\adfsplit
 $\{0, 83, 229, 369\}$, $\{0, 63, 179, 389\}$, $\{0, 78, 120, 479\}$, $\{0, 88, 89, 149\}$,\adfsplit
 $\{0, 67, 454, 544\}$, $\{0, 87, 273, 504\}$, $\{0, 68, 109, 309\}$, $\{0, 112, 374, 494\}$,\adfsplit
 $\{0, 90, 334, 468\}$, $\{0, 137, 204, 429\}$, $\{2, 59, 227, 334\}$, $\{0, 129, 163, 324\}$,\adfsplit
 $\{0, 117, 428, 474\}$, $\{2, 118, 439, 469\}$, $\{2, 123, 174, 249\}$, $\{0, 214, 258, 302\}$,\adfsplit
 $\{0, 282, 419, 538\}$, $\{0, 153, 355, 514\}$, $\{0, 202, 308, 519\}$, $\{3, 48, 254, 413\}$,\adfsplit
 $\{2, 158, 238, 309\}$, $\{2, 178, 288, 364\}$, $\{3, 78, 169, 453\}$, $\{2, 193, 299, 347\}$,\adfsplit
 $\{0, 329, 382, 493\}$, $\{2, 62, 194, 518\}$, $\{3, 139, 248, 288\}$, $\{2, 47, 187, 329\}$,\adfsplit
 $\{0, 80, 203, 318\}$, $\{0, 140, 313, 443\}$, $\{0, 183, 268, 315\}$, $\{0, 228, 328, 502\}$,\adfsplit
 $\{0, 222, 323, 388\}$, $\{2, 102, 222, 453\}$, $\{0, 213, 310, 542\}$, $\{0, 143, 227, 395\}$,\adfsplit
 $\{0, 105, 393, 467\}$, $\{0, 172, 333, 350\}$, $\{0, 158, 307, 335\}$, $\{0, 148, 262, 522\}$,\adfsplit
 $\{0, 133, 197, 305\}$, $\{0, 70, 408, 410\}$, $\{2, 17, 213, 352\}$, $\{0, 95, 237, 537\}$,\adfsplit
 $\{0, 92, 207, 325\}$, $\{0, 407, 412, 482\}$, $\{0, 132, 287, 320\}$, $\{0, 180, 392, 497\}$,\adfsplit
 $\{0, 107, 160, 330\}$, $\{0, 60, 187, 402\}$, $\{0, 75, 257, 422\}$, $\{0, 147, 165, 295\}$,\adfsplit
 $\{0, 242, 387, 477\}$
}

\noindent under the action of the mapping $x \mapsto x + 5 \adfmod{545}$.
The deficiency graph is connected and has girth 6.


\end{document}